\documentclass[11pt]{article}
\usepackage[numbers,sort&compress]{natbib}
\usepackage{color,colordvi}
\usepackage{fullpage}

\usepackage{booktabs}
\usepackage{caption}
\usepackage{amsmath}
\usepackage{amsfonts,amsthm,amssymb,mathrsfs,bbding}
\usepackage{float}
\usepackage{graphicx}
\usepackage{enumerate}
\usepackage{tikz}
\usepackage{caption}
\usepackage{rotating}
\allowdisplaybreaks[4]
\usepackage{tabularx}

\newtheorem{thm}{Theorem}

\newtheorem{lem}{Lemma}[section]
\newtheorem{cor}{Corollary}
\newtheorem{pro}{Proposition}[section]

\newtheorem{defi}{Definition}[section]
\renewcommand\proofname{\bf Proof}

\begin{document}

\title{\bf  Minimally $(k,k)$-edge-connected graphs via spectral radius}
\author{\bf Yu Wang$^a$,\  Dan Li$^a$, \ Huiqiu Lin$^{a,b}$\thanks{Corresponding author. E-mail address: huiqiulin@126.com (H. Lin)}\\[2mm]
\small\it $^a$College of Mathematics and System Science, Xinjiang
University, \\
\small\it   Urumqi 830017, PR China\\[1mm]
\small\it  $^b$Department of Mathematics, East China University of Science and Technology,  \\
\small\it   Shanghai 200237, PR China}
\date{}
\small{}
\maketitle
{\flushleft\large\bf Abstract}:
For $l > 1$, the $l$-edge-connectivity $\kappa'_l(G)$ of a connected graph $G$ is defined as the minimum number of edges whose removal leaves a graph with at least $l$ components. A graph is minimally $(k,l)$-edge-connected if $\kappa'_l(G)\geq k$ but for any edge $e\in E(G)$ satisfies that $\kappa'_l(G-e)< k$. Motivated by two foundational extremal problems: Brualdi and Solheid's problem [SIAM J. Algebra Discrete Methods (1986)] for graphs of fixed order: determine sharp upper bounds for the spectral radius over graph families and characterize extremal graphs; and its fixed size analogue proposed by Brualdi and Hoffman [Linear Algebra Appl. (1985)], we resolve both problems for minimally $(k,k)$-edge-connected graphs. Building on the structural framework of Hennayake, Lai, Li, and Mao [J. Graph Theory (2003)], we combine edge-switching method and double eigenvectors skill to characterize the graphs maximizing the spectral radius among all minimally $(k,k)$-edge-connected graphs of prescribed order or size. Our results generalize the $k=2$ cases established by Lou, Min, and Huang [Electron. J. Comb. (2023)] and Chen and Guo [Discrete Math. (2019)].

\maketitle {\flushleft\textit{\bf Keywords}:
Spectral radius; minimally $(k,l)$-edge-connected; edge-connected.}

\section{Introduction}
\vspace{1ex}

Throughout, we consider simple graphs and use $\kappa'(G)$ to denote the \emph{edge-connectivity} of a graph $G$. 
For an integer $l\geq2$, Boesch and Chen \cite{F.T} defined the \emph{$l$-edge-connectivity} $\kappa'_l(G)$ of a connected graph $G$ to be the smallest number of edges whose removal leaves a graph with at least $l$ components, if $|V(G)|\geq l$; otherwise, $\kappa'_l(G)=|V(G)|$. 
As a generalization of $l$-edge-connectivity, Oellermann \cite{O.R} introduced the concept of \emph{$(k,l)$-edge-connected}, defined by $\kappa'_l(G)\geq k$.  A graph $G$ is \emph{minimally $(k,l)$-edge-connected} if $\kappa'_l(G)\geq k$ but for any edge $e\in E(G)$ satisfies that $\kappa'_l(G-e)< k$. Thus, a minimally $(2,2)$-edge-connected graph is  equivalent to a minimally $2$-edge-connected graph. 

For a graph $G$, let $A(G)$ be its adjacency matrix, and let $\lambda_i(G)$ denote the $i$th largest eigenvalue of $A(G)$.
Particularly, the largest eigenvalue of $A(G)$, denoted by $\rho(G)$, is called the \textit{spectral radius} of $G$. Based on the above facts, extensive research has explored the connections between edge-connectivity and graph eigenvalues. In 2004, Chandran \cite{S.L.} proved that 
$\kappa'(G)=d$ if $G$ is an $n$-vertex
$d$-regular graph with $\lambda_2(G)<d-1-\frac{d}{n-d}$. This result was extended by Cioab\u{a} \cite{S.M.}, who  showed that for $d$-regular graph, $\kappa'(G)\geq d$ proveded that $\lambda_2(G)<d-\frac{2(\kappa'(G)-1)}{d+1}$. Gu, Lai, Li and Yao\cite{Gu} subsequently generalized these findings to general graphs,
proving that $\kappa'(G)\geq k$ when $\lambda_2(G)<\delta-\frac{2(\kappa'(G)-1)}{\delta+1}$, where the minimum degree satisfies
$\delta\geq 2\kappa'(G)\geq 4$.

A fundamental problem concerning spectral radius was posed by Brualdi and Solheid \cite{R.A.B}: Given a set of graphs $\mathscr{G}$, find
an upper bound for the spectral radius of graphs in $\mathscr{G}$ and characterize the graphs in which the maximal spectral
radius is attained. 
Recent investigations have increasingly focused on the interplay between graph structure and edge-connectivity. Notably, Ning, Lu and Wang \cite{W.N} began exploring the edge-connectivity of a graph in relations to its spectral radius, conjecturing that the graph $F^{\kappa'}_{n,\delta+1}$ maximizes the spectral radius among all $n$-vertex graphs with minimum degree $\delta$ and edge-connectivity $\kappa'$. Here, $F^{\kappa'}_{n,\delta+1}$ denotes the graph obtained from $K_{\delta+1}\cup K_{n-\delta-1}$
by adding $\kappa'$ edges joining a vertex in $K_{\delta+1}$ and $\kappa'$ vertices in $K_{n-\delta-1}$. Fan, Gu and Lin \cite{D.D.} later verified this conjecture for $n\geq 2\delta+4$. Furthermore, Chen and Guo \cite{X.D.} determined the maximum spectral radius for the minimally $(2,2)$-edge-connected graphs with
order $n$ and characterized the corresponding extremal graphs. Recently, Lou and He \cite{Z.ZL} proved that for any fixed integer $k\geq 3$, setting $\alpha=\frac{1}{24k(k+1)}$, the complete bipartite graph $K_{k,n-k}$ uniquely achieves the maximal spectral radius among all $n$-vertex minimally $(k,2)$-edge-connected graphs when  $n\geq\frac{18k}{\alpha^2}$. The relation between the edge-connectivity $\kappa'(G)$ and eigenvalues was also well investigated by many researchers\cite{D.F.,Y.W}.
Motivated by these advances, in this paper, we 
obtained the corresponding extremal graphs in 
minimally $(k,k)$-edge-connected graphs of order $n$ with the maximum spectral radius.
Let $K^{k-2}_{2, n-k}$ be a graph obtained by identifying the $k\!-\!2$ degree vertex and the $n-k$ degree vertex from the disjoint union of $K_{1,k-2}$ and $K_{2,n-k}$.

\begin{thm}\label{thm::1}
Let $G$ be a minimally $(k,k)$-edge-connected graph of order $n\geq 4k$. Then
$$\rho(G) \leq \frac{\sqrt{4n-2k-4+2\sqrt{5k^2-8kn+4n^2-4k+4}}}{2}$$ with equality if and only if $G \cong K^{k-2}_{2, n-k}$.
\end{thm}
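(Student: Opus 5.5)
The plan is to combine the structural description of minimally $(k,k)$-edge-connected graphs due to Hennayake, Lai, Li and Mao with a Perron-vector edge-switching argument. This reduces the extremal problem to a small family of candidates. The candidates are then compared through their characteristic polynomials (equitable quotients).

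\textbf{Structure.} I will use the structure theorem in the following form, which I am assuming rather than quoting. Removing $k-1$ edges leaves $k$ components only if the removed edges form a "tree of cuts". Hence a minimally $(k,k)$-edge-connected $G$ should decompose as follows: there is a set $B$ of cut edges (bridges) with $|B|\le k-2$, and a minimally $2$-edge-connected-type remainder whose blocks are glued along a tree structure through $B$. From this I want two consequences:
\begin{itemize}
\item[(i)] a size bound of the form $e(G)\le 2n-k-2$;
\item[(ii)] after deleting the pendant trees hanging on bridges, every remaining $2$-edge-connected piece is minimally $2$-edge-connected.
\end{itemize}
Then $G$ is built from minimally $2$-edge-connected graphs $H_1,\dots,H_t$ together with bridges, and the extremal graph $K^{k-2}_{2,n-k}$ fits this picture with $t=1$, $H_1=K_{2,n-k}$ and $k-2$ pendant bridges.

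\textbf{Edge switching.} Let $G$ be extremal and let $x$ be its Perron vector. I will move pieces, Kelmans/edge-rotation style, toward a vertex $u$ with maximum $x_u$:
\begin{itemize}
\item every bridge not incident to $u$ is rotated onto $u$, which by the usual rotation lemma does not decrease $\rho$;
\item the $2$-edge-connected pieces are merged into one, since replacing two pieces sharing a bridge by a single larger minimally $2$-edge-connected graph plus a pendant edge also increases $\rho$.
\end{itemize}
Each operation must preserve minimal $(k,k)$-edge-connectivity, and I will check this against the structure from the first step. After these moves, $G$ is a minimally $2$-edge-connected graph $H$ on $n-s$ vertices with $s\le k-2$ pendant edges at $u$. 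Here the "double eigenvectors" idea enters. Comparing $G$ with the candidate $G^*$ via $x^{T}(A(G^*)-A(G))y$, where $y$ is the Perron vector of $G^*$, should show that $s=k-2$ is forced. The same comparison should show that $H$ must be the Chen--Guo extremal graph $K_{2,n-k}$ with $u$ as one of its degree-$(n-k)$ vertices. I expect to use Chen and Guo's $k=2$ result for the core, but with a weight at $u$: the pendant edges add $k-2$ to the degree of $u$, so I need a weighted version of their extremality.

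\textbf{Main obstacle.} The hard step is exactly this weighted comparison. I must show that among minimally $2$-edge-connected $H$ on $n-k+2$ vertices with a vertex $u$ carrying $k-2$ extra pendant neighbours, $K_{2,n-k}$ with $u$ a hub is optimal. I would first try the inequality $\rho^2 x_u \le \sum_{v\sim u} d(v)x_v$-type estimates. These should be combined with the facts that $e(H)\le 2|H|-4$ and that the non-hub vertices of $H$ have degree at most $2$ in the tight case. The condition $n\ge 4k$ gives room to absorb error terms. Competing configurations would be handled by explicit quotient polynomials; examples are pendants attached to a degree-$2$ vertex, or $H$ having a triangle-free structure other than $K_{2,m}$.

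\textbf{Computing the bound.} For $G^*=K^{k-2}_{2,n-k}$, take the equitable partition $\{u\}$, $\{w\}$, the $n-k$ common neighbours, and the $k-2$ leaves. Its quotient gives $x_w=\frac{n-k}{\rho^2}\,x_u$-type relations, and the resulting equation
\[
\rho^4-(2n-k-2)\rho^2+(n-k)(k-2)=0
\]
has largest root
\[
\rho^2=\frac{2n-k-2+\sqrt{5k^2-8kn+4n^2-4k+4}}{2}.
\]
This matches the stated value. Uniqueness follows because every switching step is strict unless the graph is already $G^*$.
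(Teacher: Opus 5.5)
\textbf{There is a genuine gap in the reduction itself.} After your moves you describe the extremal graph as ``a minimally $2$-edge-connected $H$ on $n-s$ vertices plus $s\le k-2$ pendant edges at $u$''. You then claim that the double-eigenvector comparison forces $s=k-2$. In that family the claim is false. For $k\ge 3$, the graph $K_{2,n-2}$ (with $s=0$) is minimally $2$-edge-connected, and
\[
\rho(K_{2,n-2})^2=2n-4>2n-k-2>\rho(K^{k-2}_{2,n-k})^2 .
\]
What is missing is the coupling between $s$ and the core. By Lemma 2.7(iv), deleting the $s$ pendant vertices leaves a \emph{minimally $(k-s,k-s)$}-edge-connected graph. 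For $k-s\ge 3$ this is much stronger than minimal $2$-edge-connectivity. By Corollary 1, every internal path then has length at least $k-s\ge 3$, so no $K_{2,t}$-type block can occur.

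This constraint is what drives the paper's Case 1 ($|B|\le k-3$). A switch shows $|B|=k-3$. The core then reduces to cycles through $u^*$ of length at most $4$, and finally to $K_2(\frac{n-k+2}{2},0,0)$. That graph loses to $K^{k-2}_{2,n-k}$ by the explicit cubic in Lemma 3.2. Your size bound $e(G)\le 2n-k-2$ would also exclude $K_{2,n-2}$, but you neither prove it nor use it to rule out $s<k-2$.

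\textbf{The case $s=k-2$ is left at the level of heuristics that cannot close it.} You correctly flag this case as the main obstacle. The competitors here are triangles at the hub, i.e.\ $K(t_1,n-k-2t_1)$, or a second $K_{2,t}$ block at $u^*$, i.e.\ $K_1(t_1,t_2,t_3)$. They differ from $K^{k-2}_{2,n-k}$ by only $O(1)$ in $\rho^2$. So estimates of the form $\rho^2x_u\le\sum_{v\sim u}d(v)x_v$, with error terms ``absorbed'' via $n\ge 4k$, cannot separate them. There is also no ready ``weighted Chen--Guo'' theorem to import, because the $k-2$ pendants change the Perron vector.

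The paper instead proves exact structure in Claims 1--3 and 8--11:
\begin{itemize}
\item $u^*$ is the unique cut vertex;
\item $G^*[A]$ is a matching plus isolated vertices;
\item $E(R)=\emptyset$;
\item two vertices of $R$ share $0$ or $2$ neighbours;
\item there is at most one star-type and at most one $K_{2,s}$-type attachment.
\end{itemize}
It then removes the remaining parameters with two exact comparisons. Proposition 3.1 uses one Perron vector and the inequality $x_{u^*}+x_v>2x_{w_1'}$. Proposition 3.2 is the double-eigenvector step, where $y_v>y_{u_1}$ follows from $\rho<n-k$.

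Your generic move, merging two pieces into ``a single larger minimally $2$-edge-connected graph'', has no specified construction and no reason for $\rho$ to increase. It is not how those extra blocks are eliminated.

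Your quotient-matrix computation of the bound is correct and matches Lemma 3.1.
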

Analogous to upper bound on spectral radius in terms of order $n$. In the 1985s,  Brualdi and Hoffman \cite{R.A} established an upper bound on spectral radius in terms of size $m$: if $m \leq\binom{k}{2}$ for some integer $k \geq 1$, then $\rho(G) \leq k-1$, with equality if and only if $G$ consists of a $k$-clique and isolated vertices. Building upon this foundation, Stanley \cite{R.P} showed that $\rho(G) \leq \frac{\sqrt{1+8 m}-1}{2}$. Subsequent investigations have focused on graphs with specific combinatorial structures. For example,  Nosal \cite{E.N} proved that if $G$ is a triangle-free graph with $m$ edges, then $\rho(G) \leq \sqrt{m}$. More recently, Lin, Ning and Wu \cite{H.L} proved that $\rho(G) \leq \sqrt{m-1}$ when $G$ is a non-bipartite and triangle-free graph of size $m$. Zhai, Lin and Shu \cite{M.Q.} obtained that if $G$ contains neither pentagon nor hexagon with a given size $m$, then $\rho(G) \leq \frac{1}{2}+\sqrt{m-\frac{3}{4}}$, with equality holding if and only if $G$ is a book graph. Notably, Lou, Min and Huang \cite{Z.Z} determined the maximum spectral radius for the minimally $(2,2)$-edge-connected graphs with size $m$ and characterized the corresponding extremal graphs. In 2025, Zhai, Lin and Shu \cite{M.Q} determined the maximum spectral radius among all the minimally $(k,2)$-edge-connected graphs with size $m\geq \Omega(k^6)$ and characterized the corresponding extremal graphs. 

Motivated by those researches, we aim  to study the spectral extremal problem of minimally $(k,k)$-edge-connected graph with size $m$. 
Let $K^{k-2}_{2, \frac{m-k+2}{2}}$ be a graph obtained by identifying the $k-2$ degree vertex of $K_{1,k-2}$ and the $\frac{m-k+2}{2}$ degree vertex of $K_{2,\frac{m-k+2}{2}}$.
Let $K^{k-2}_{2,\frac{m-k-1}{2}}*K_3$ be a graph obtained from by identifying the $k-2$ degree vertex of $K_{1,k-2}$, the vertex of a triangle and the $\frac{m-k-1}{2}$ degree vertex of $K_{2,\frac{m-k-1}{2}}$. 

\begin{thm}\label{thm::2}
Let $G$ be a minimally $(k,k)$-edge-connected graph of size $m\geq (432 k)^2$.

\noindent $(i)$ If $m-k\equiv 0\ (mod\ 2)$, then $$\rho(G)\leq \frac{\sqrt{2m+2\sqrt{2k^2-2km+m^2-8k+4m+8}}}{2}$$ with equality if and only if $G\cong K^{k-2}_{2,\frac{m-k+2}{2}}$. 

\noindent $(ii)$ If $m-k\equiv 1\ (mod\ 2)$, then $$\rho(G)\leq \rho\big(K^{k-2}_{2,\frac{m-k-1}{2}}*K_3\big)$$ with equality if and only if $G\cong K^{k-2}_{2,\frac{m-k-1}{2}}*K_3$. 
\end{thm}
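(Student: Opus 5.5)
The plan is to combine the structure theorem of Hennayake, Lai, Li and Mao with a Perron-vector analysis around the vertex of largest weight. We then finish with edge-switching and a comparison of a few candidate graphs. Throughout, let $G$ be extremal, that is, a minimally $(k,k)$-edge-connected graph of size $m$ with maximum $\rho=\rho(G)$. Let $x$ be the unit Perron vector, and let $u^*$ be a vertex with $x_{u^*}=\max_v x_v$.

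\textbf{Step 1: lower bound and the eigen-equation inequality.} Since $K^{k-2}_{2,\lfloor (m-k+2)/2\rfloor}$ (or the $K_3$ variant) is admissible, we get $\rho\ge\sqrt{m-k}$, roughly. Next, write
\[
\rho^2x_{u^*}=d(u^*)x_{u^*}+\sum_{v\ne u^*}|N(u^*)\cap N(v)|\,x_v .
\]
Split the sum according to whether $v\in N(u^*)$ or $v\notin N[u^*]$. This gives
\[
\rho^2x_{u^*}\le d(u^*)x_{u^*}+2e(N(u^*))x_{u^*}+\sum_{w\in N(u^*)}\sum_{v\in N(w)\setminus N[u^*]}x_v .
\]
From the Hennayake--Lai--Li--Mao framework I would extract two sparsity facts. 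First, $G[N(u)]$ contains few edges, at most a matching-like set, so triangles through a vertex are rare. Second, the number of edges of $G$ is at most about $2n$. Combining these with $m\ge(432k)^2$ shows that the edges not incident to $N[u^*]$, together with the edges inside $N(u^*)$, contribute only $O(k)$. Consequently $\sum_{v\notin N[u^*]}x_v$ must be large relative to $x_{u^*}$. From this I would deduce that there is a second vertex $w^*$ with $x_{w^*}\ge(1-O(1/\sqrt m))x_{u^*}$ and $|N(u^*)\cap N(w^*)|\ge m/2-O(k)$.

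\textbf{Step 2: the bulk is $K_{2,t}$, and the leftover structure is pinned down.} Set $T=N(u^*)\cap N(w^*)$. Vertices of $T$ have small Perron weight, about $x_{u^*}/\sqrt{m/2}$. If a vertex of $T$ had an extra neighbour, I would move that edge to $u^*$. This is an edge-switch that keeps $G$ minimally $(k,k)$-edge-connected, which is checked via the structural characterization, and it strictly increases $\rho$ because $x_{u^*}$ is maximal. Hence every vertex of $T$ has degree $2$. The remaining $m-2|T|$ edges form the part $H$ outside the $K_{2,|T|}$. Minimality of $(k,k)$-edge-connectivity forces $H$ to supply exactly the extra edge cuts needed, which is what produces the pendant star $K_{1,k-2}$. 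Repeating the switching argument, all edges of $H$ can be assumed to hang on $u^*$: pendant edges at $u^*$, plus at most one triangle through $u^*$ when parity forces it. This is because $|T|$ is determined by $m$ and the parity of $m-k$.

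\textbf{Step 3: comparing the final candidates.} After Step 2 only a handful of graphs remain, differing in how the $O(k)$ leftover edges sit. For case (i) the candidate is $K^{k-2}_{2,(m-k+2)/2}$. Its quotient matrix on the partition $\{u^*\},\{w^*\},T,\text{leaves}$ gives a quartic whose largest root is the stated formula. For case (ii) I would compare $K^{k-2}_{2,(m-k-1)/2}*K_3$ with the alternatives, namely the triangle placed at $w^*$ or with a leaf moved. To do this I would use the double-eigenvector technique: for two candidates $G_1,G_2$ with Perron vectors $x,y$,
\[
(\rho_1-\rho_2)x^Ty=\sum_{ij\in E(G_1)}(x_iy_j+x_jy_i)-\sum_{ij\in E(G_2)}(x_iy_j+x_jy_i),
\]
and the sign is read off from the eigen-equations.

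\textbf{Main obstacle.} The hardest step is Step 1. One has to turn the structure theorem into quantitative bounds on $e(N(u^*))$ and on the edges far from $u^*$ that are strong enough to force the second heavy vertex $w^*$. This is where the threshold $m\ge(432k)^2$ enters. I would try first to prove $e(N(u))\le k$ for every vertex $u$, and to show that $G$ minus its degree-$2$ vertices has $O(k)$ edges; both should follow from the Hennayake--Lai--Li--Mao description.
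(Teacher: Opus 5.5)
\paragraph{There is a genuine gap in Step 1.} The two sparsity facts you plan to extract from the Hennayake--Lai--Li--Mao description are false for general minimally $(k,k)$-edge-connected graphs.

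The chordless-cycle property only makes $G[N(u)]$ a matching, and a matching can be large. The friendship graph with $t$ triangles plus $k-2$ pendant edges at its centre is minimally $(k,k)$-edge-connected by Lemma~\ref{lem::2.7}(iv), and it has $e(N(u^*))=t=(m-k+2)/3$. Similarly, take a cycle $C_r$ with a triangle glued at each of its vertices and add $k-2$ pendant edges. This graph is minimally $(k,k)$-edge-connected, and after deleting its degree-$2$ vertices it still has at least $r=(m-k+2)/4$ edges. So neither $e(N(u))\le k$ nor ``$O(k)$ edges off the degree-$2$ vertices'' can be proved. At best they hold for the extremal graph, as consequences of the spectral lower bound.

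Your inequality cannot deliver even that. Because you bound every $x_v$ with $v\in N(u^*)$ by $x_{u^*}$, it amounts to $\rho^2\le m+e(N(u^*))-e(\overline{N}(u^*))$. Combined with $\rho^2\ge m-k$, this only gives $e(\overline{N}(u^*))\le e(N(u^*))+k$, which is perfectly consistent with friendship-type configurations. Hence the second heavy vertex $w^*$ does not follow.

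Even if you granted yourself small $e(N(u^*))$, the deficit argument only says that cross edges land mostly on vertices of near-maximal weight. Concluding that there is essentially one such vertex, with about $m/2$ common neighbours, needs two further ingredients. First, a lower bound on the degree of heavy vertices. Second, an upper bound on $n$. Your fact $m\le 2n$ points the wrong way for the common-neighbourhood count $d(u^*)+d(w^*)-n$.

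\paragraph{The missing idea is a Perron-weight thresholding that makes light vertices negligible, and the paper supplies it as follows.} It first splits off the case $b\le k-3$. There the extremal graph is forced to be $F^{k-3}_{(m-k+3)/3}$, whose spectral radius is below $\sqrt{m-k}$ by Lemma~\ref{lem::4.3}. Your remark in Step 2 that minimality ``produces'' the pendant star does not replace this case analysis.

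For $b=k-2$ the paper proceeds in these steps:
\begin{enumerate}
\item It bounds $|V(G^*)|\le \frac{m+k}{2}+\sqrt m-1$ via Lemmas~\ref{lem::2.3} and~\ref{lem::2.5}.
\item It sets $U_0=\{v:x_v>\alpha x_{u^*}\}$ with $\alpha=\frac1{36k}$ and shows $|U_0|<2\sqrt m$ using $e(U_0)\le 2|U_0|$ (Lemma~\ref{lem::2.10}).
\item For $x_v\ge x_{u^*}/4$ it proves $d(v)>\bigl(x_v/x_{u^*}-\frac1{6k}\bigr)\frac m2$, then bootstraps this to $x_v>(1-\frac1{6k})x_{u^*}$.
\item It uses $K_{3,3}$-freeness together with the vertex bound to get exactly two heavy vertices, which span a $K_{2,|V|}$.
\item Only afterwards does it remove triangles at $u^*$, by an explicit switching that uses $x_w=x_{u^*}/(\rho^*-1)$ for triangle vertices. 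At most one triangle survives, according to the parity of $m-k$.
\end{enumerate}
Your Steps 2--3 broadly match this endgame. Step 1, as proposed, rests on lemmas that do not hold.
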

\section{ Preliminaries}
For any $v\in V(G)$, let $N_{G}(v)=\{u\mid  uv\in E(G)~\text{and}~u\in V(G)\}$ and $N_{G}[v]=N_{G}(v)\cup\{v\}$. Given a vertex subset $F \subset V(G)$, we denote by $G-F$  the graph obtained from $G$ by deleting the vertices of $F$ together with the edges incident with them.

\begin{defi}[Cvetovi\'{c}, Rowlinson, Simi\'{c} \cite{D.C}]
Given a graph $G$, the vertex partition $\Pi: V(G)=V_1\cup \cdots\cup V_k$ is said to be an \emph{equitable partition} if, for each $u\in V_i$, $|V_j\cap N_{G}(u)|=b_{ij}$ is a constant depending only on $i$, $j$ $(1\leq i, j\leq k)$. The matrix $B_\Pi=(b_{ij})$ is called the \emph{quotient matrix} of $G$ with respect to $\Pi$.
\end{defi}

\begin{lem}[Cvetovi\'{c}, Rowlinson, Simi\'{c} \cite{D.C}]\label{lem::2.1}
Let $\Pi: V(G)=V_1\cup \cdots\cup V_k$ be an equitable partition of $G$ with quotient matrix $B_\Pi$. Then $\det(xI-B_\Pi)\mid\det(xI-A(G))$. Furthermore, the largest eigenvalue of $B_\Pi$ is just the spectral radius of $G$.
\end{lem}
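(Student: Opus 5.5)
The plan is to encode the partition by its characteristic matrix and turn equitability into the single matrix identity $A(G)S=SB_\Pi$. Let $n=|V(G)|$, write $A=A(G)$ and $B=B_\Pi$, and let $S$ be the $n\times k$ matrix with $S_{v,i}=1$ if $v\in V_i$ and $0$ otherwise. The entry of $AS$ in position $(u,j)$ is $|N_G(u)\cap V_j|$. If $u\in V_i$, this equals $b_{ij}$ by definition of an equitable partition, which is exactly the $(u,j)$ entry of $SB$. Hence $AS=SB$.

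\textbf{Divisibility.} The columns of $S$ have disjoint nonempty supports, so they are linearly independent and span a $k$-dimensional subspace $W$. The identity $AS=SB$ shows that $W$ is $A$-invariant, and that $B$ is the matrix of $A|_W$ in the basis formed by the columns of $S$. Extend this basis to a basis of $\mathbb{R}^n$ and let $P$ be the invertible matrix whose first $k$ columns are those of $S$. Then
\[
P^{-1}AP=\begin{pmatrix} B & *\\ 0 & C\end{pmatrix}
\]
for some matrix $C$. Therefore $\det(xI-A)=\det(xI-B)\det(xI-C)$, which gives $\det(xI-B_\Pi)\mid\det(xI-A(G))$. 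As a consequence, every eigenvalue of $B$ is an eigenvalue of the symmetric matrix $A$, so all eigenvalues of $B$ are real and the largest of them is at most $\rho(G)$.

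\textbf{Reverse inequality.} Transposing $AS=SB$ and using $A^{T}=A$ gives $S^{T}A=B^{T}S^{T}$. By Perron--Frobenius, $A$ has a nonnegative nonzero eigenvector $x$ with $Ax=\rho(G)x$. If $G$ is disconnected, take a Perron vector of a component attaining $\rho(G)$ and extend it by zeros. Then $S^{T}x$ is nonnegative, and it is nonzero because each vertex lies in some class $V_i$. Moreover
\[
B^{T}(S^{T}x)=S^{T}Ax=\rho(G)\,S^{T}x .
\]
So $\rho(G)$ is an eigenvalue of $B^{T}$, hence of $B$. Combined with the previous step, the largest eigenvalue of $B_\Pi$ equals $\rho(G)$.

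\textbf{Main obstacle.} The only delicate point is this last step. Since $B$ need not be symmetric, one cannot apply a Rayleigh-quotient argument to $B$ directly. The transpose trick avoids this: it pushes the Perron vector of $A$ down to an eigenvector of $B^{T}$, and the divisibility already forces the spectrum of $B$ to be real, so ``largest eigenvalue'' makes sense. The only care needed there is that $S^{T}x\neq 0$, which holds because $x$ is nonnegative and nonzero and every column of $S^{T}$ sums over a class of the partition.
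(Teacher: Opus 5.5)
Your argument is correct and complete. The paper gives no proof of this lemma; it quotes it from Cvetkovi\'{c}--Rowlinson--Simi\'{c}.

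\textbf{Divisibility.} Your first step is the standard argument from that source. You derive the identity $A(G)S=SB_\Pi$ for the characteristic matrix $S$. You then block-triangularize over the $A(G)$-invariant column space of $S$.

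\textbf{Spectral radius.} Here the usual textbook route runs in the opposite direction to yours:
\begin{itemize}
\item It takes a nonnegative Perron vector $y$ of the nonnegative matrix $B_\Pi$.
\item It lifts $y$ to the nonnegative eigenvector $Sy$ of $A(G)$.
\item It then needs connectedness of $G$, i.e.\ irreducibility of $A(G)$. Under that hypothesis the only eigenvalue with a nonnegative eigenvector is $\rho(G)$, since every other eigenvector is orthogonal to the positive Perron vector.
\end{itemize}
You instead push the Perron vector $x$ of $A(G)$ down to the eigenvector $S^{T}x$ of $B_\Pi^{T}$. This shows directly that $\rho(G)$ is an eigenvalue of $B_\Pi$, so you need neither that uniqueness fact nor connectivity. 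Combined with the divisibility, your version proves the statement for arbitrary graphs, slightly more than the lifting argument gives. The paper only applies the lemma to connected graphs, where both routes work.

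One small wording slip: in your last paragraph it is the rows of $S^{T}$, not the columns, that sum over a class. The justification you gave earlier is the right one: the entries of $S^{T}x$ add up to $\sum_v x_v>0$.
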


\begin{lem}[See \cite{R.A.}]\label{lem::2.2}
Let $G$ be a connected graph and $G'$ be a proper subgraph of $G$. Then $\rho(G')<\rho(G)$.
\end{lem}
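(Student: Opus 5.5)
The plan is to use the Rayleigh quotient characterization of the largest eigenvalue of a real symmetric matrix, together with the Perron--Frobenius theorem for irreducible nonnegative matrices.

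First, reduce to matrices of the same size. If $G'$ omits some vertices of $G$, add them back to $G'$ as isolated vertices. This does not change $\rho(G')$, since the spectrum only gains zeros and $\rho(G')\ge 0$. We may therefore regard $A(G')$ and $A(G)$ as $n\times n$ matrices on the common vertex set $V(G)$, with $A(G')\le A(G)$ entrywise. The padded $G'$ is a spanning proper subgraph, so $E(G')\subsetneq E(G)$. In the case where a vertex $v$ was missing, note that $G$ is connected with $n\ge 2$, so $v$ has an incident edge in $G$, and that edge is absent from $G'$.

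Next, let $x$ be a unit eigenvector of $A(G')$ for $\rho(G')$. By Perron--Frobenius for nonnegative matrices, which needs no irreducibility for this part, $x$ may be chosen with $x\ge 0$. Then
$$\rho(G')=x^{T}A(G')x\le x^{T}A(G)x\le \rho(G).$$
The first inequality holds because $A(G)-A(G')\ge 0$ and $x\ge 0$. The second is the Rayleigh principle.

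Finally, prove strictness, which is the only real point. Suppose $\rho(G')=\rho(G)$. Then equality holds in the Rayleigh bound, so $x$ is an eigenvector of $A(G)$ for $\rho(G)$. Since $G$ is connected, $A(G)$ is irreducible. By Perron--Frobenius, any nonnegative eigenvector for $\rho(G)$ is a positive multiple of the Perron vector, hence strictly positive: $x_u>0$ for all $u$. Then
$$x^{T}A(G)x-x^{T}A(G')x=2\sum_{uv\in E(G)\setminus E(G')}x_ux_v>0,$$
because $E(G)\setminus E(G')\neq\emptyset$. This contradicts the equality in the first inequality. Hence $\rho(G')<\rho(G)$.

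The main obstacle is only the bookkeeping in the vertex-deletion case: it must be checked that a missing edge always exists, and this is where the connectivity of $G$ is used a second time.
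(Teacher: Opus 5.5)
Your proof is correct. The paper gives no argument for this lemma and only cites it from the literature. There it is usually obtained from the Perron--Frobenius comparison theorem: if $0\le B\le A$ entrywise, $A$ is irreducible and $B\neq A$, then $\rho(B)<\rho(A)$.

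That general route does not need symmetry, so it also covers digraphs. Its strictness step, however, goes through left and right Perron vectors, via a subinvariance argument.

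Your route instead uses the symmetry of adjacency matrices. The Rayleigh quotient gives the weak inequality at once. Equality then forces the nonnegative vector $x$ into the top eigenspace of $A(G)$, where irreducibility makes it strictly positive, so any edge of $G$ missing from $G'$ gives a strict gain.

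Your argument can even be made fully elementary, using only the spectral theorem:
\begin{itemize}
\item Nonnegativity of $x$ follows by replacing $x$ with $|x|$, whose Rayleigh quotient is at least as large.
\item Positivity follows because a zero coordinate $x_u=0$ of a nonnegative eigenvector gives $\sum_{v\sim u}x_v=0$. So every neighbour of $u$ also vanishes, and this propagates through the connected graph $G$.
\end{itemize}

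Your padding step for non-spanning $G'$ is not just bookkeeping. The paper applies the lemma to non-spanning subgraphs, for example $K_{2,t_3+1}\subset G_1$ in the proof of Proposition~\ref{pro::3.1}.
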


\begin{lem}[See \cite{Z.W}]\label{lem::2.3}
Let $(H,v)$ and $(K,w)$ be two connected rooted graphs. Then 
$$\rho((H,v)*(K,w))\leq \sqrt{\rho(H)^2+\rho(K)^2},$$
the equality holds if and only if both $H$ and $K$ are stars, where $(H,v)*(K,w)$ is obtained by identifying $v$ and $w$ from disjoint union of $H$ and $K$.
\end{lem}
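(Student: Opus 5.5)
The plan is a quadratic-form (Rayleigh quotient) argument applied to $A(G)^2$, where $G=(H,v)*(K,w)$ and $u$ is the identified vertex. Let $x>0$ be the unit Perron vector of $G$ and $\rho=\rho(G)$. Split $A(G)=A_H+A_K$, with $A_H$ and $A_K$ the adjacency matrices of the copies of $H$ and $K$ inside $G$, padded by zeros. Put $y=A_Hx$ and $z=A_Kx$. Since $V(H)\cap V(K)=\{u\}$, the vector $y$ is supported on $V(H)$ and $z$ on $V(K)$. Hence
$$\rho^2=\|A(G)x\|^2=\sum_{a\in V(H)\setminus\{u\}}y_a^2+\sum_{b\in V(K)\setminus\{u\}}z_b^2+(y_u+z_u)^2 .$$
The goal is to bound the right-hand side by $\rho(H)^2\|x_H'\|^2+\rho(K)^2\|x_K'\|^2$ for suitable vectors $x_H'$ on $V(H)$ and $x_K'$ on $V(K)$ with $\|x_H'\|^2+\|x_K'\|^2\le 1$. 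Then $\rho^2\le\max\{\rho(H)^2,\rho(K)^2\}\le\rho(H)^2+\rho(K)^2$ would actually follow, but the weaker target is all that is needed. The inequality used for each piece is $\|A_Hw\|^2\le \rho(H)^2\|w\|^2$, valid because $A_H^2$ has largest eigenvalue $\rho(H)^2$.

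The natural choice $x_H'=x|_{V(H)}$ and $x_K'=x|_{V(K)}$ recovers all terms except the cross term $2y_uz_u$. However, it double counts $x_u^2$, since $\|x_H'\|^2+\|x_K'\|^2=1+x_u^2$. I will therefore redistribute $x_u$. Split $x_u^2=s^2+t^2$ and estimate
$$(y_u+z_u)^2\le \frac{x_u^2}{s^2}\,y_u^2+\frac{x_u^2}{t^2}\,z_u^2$$
by Cauchy--Schwarz. Then I would try to absorb the scaled terms into $\|A_Hx_H'\|^2$, where $x_H'$ has entry $s$ at $u$, and similarly for $K$. The loss from lowering the $u$-entry shows up only at $N_H(u)$ and $N_K(u)$. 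To control it, I would use the eigen-equations $\rho x_u=y_u+z_u$ and $\rho x_a=y_a+x_u$ for $a\in N_H(u)$, and choose $s^2:t^2$ proportional to $y_u:z_u$, the natural Cauchy--Schwarz weighting.

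\textbf{Main obstacle.} The main obstacle is this bookkeeping at $u$ and its neighbours: showing that the cross term $2y_uz_u$ is paid for exactly by the slack created by not double counting $x_u^2$. If the direct weighting fails, my fallback is a walk-counting version. Compare closed walks of length $2\ell$ in $G$ with pairs of closed walks in $H$ and $K$. Every closed walk in $G$ decomposes at its visits to $u$ into $H$-excursions and $K$-excursions. This suggests the generating-function bound $W_G(t)\preceq$ (the convolution of $W_H$ and $W_K$), which gives exactly the $\sqrt{\rho(H)^2+\rho(K)^2}$ growth rate when the walk counts are governed by their squares, as happens for bipartite stars.

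For the equality case, I would check stars directly. The rooted stars must be taken with $v,w$ their centres, since for other roots the statement does not hold as written; for instance a $K_1$ root gives $\rho=\rho(H)$. In that case $K_{1,p}*K_{1,q}=K_{1,p+q}$ and $\sqrt{p+q}=\sqrt{\rho(H)^2+\rho(K)^2}$. Conversely, I would trace the inequalities used. Equality forces $x_H'$ and $x_K'$ to be Perron vectors of $A_H^2$ and $A_K^2$, and equality in Cauchy--Schwarz at $u$ forces a proportionality condition there. I would then argue that equality in $\|A_Hw\|=\rho(H)\|w\|$, with $w$ vanishing or rescaled at $u$, forces every edge of $H$ to be incident to $u$, so $H$ is a star centred at $v$, and likewise for $K$.
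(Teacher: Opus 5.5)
The paper does not prove this lemma (it is quoted from the literature), so your argument has to stand on its own. Its central step cannot succeed. You aim to bound $\rho^2$ by $\rho(H)^2\|x_H'\|^2+\rho(K)^2\|x_K'\|^2$ with $\|x_H'\|^2+\|x_K'\|^2\le 1$, and you note yourself that this would give $\rho(G)\le\max\{\rho(H),\rho(K)\}$. That inequality is false whenever $H$ and $K$ both have an edge. In that case $H$ is a proper subgraph of the connected graph $G$, so $\rho(G)>\rho(H)$ by Lemma~\ref{lem::2.2}, and likewise $\rho(G)>\rho(K)$. The smallest instance is $K_2*K_2=P_3$, where $\rho^2=2$ but your target bound is at most $1$. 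So no splitting $x_u^2=s^2+t^2$ and no Cauchy--Schwarz weighting can make the cross term $2y_uz_u$ be ``paid for'' by removing the double count of $x_u^2$. The obstacle you flag is an impossibility, not bookkeeping. The walk-counting fallback has the same defect: a coefficientwise bound of $W_G$ by a convolution of $W_H$ and $W_K$ has growth rate at most $\max\{\rho(H),\rho(K)\}$. In $P_3$ the centre has $2^\ell$ closed walks of length $2\ell$, while the convolution gives only $\ell+1$; the free interleaving of $H$- and $K$-excursions is what creates the extra growth. Since the inequality chain is broken, the equality analysis built on it is unsupported as well.

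The repair is the opposite of your redistribution: keep the double count, because it is exactly the slack you need. Write $x_H=x|_{V(H)}$ and $x_{H-u}=x|_{V(H)\setminus\{u\}}$, and similarly for $K$. Your identity then gives
\[
\rho^2=\|A_Hx_H\|^2+\|A_Kx_K\|^2+2y_uz_u\le\rho(H)^2\bigl(1-\|x_{K-u}\|^2\bigr)+\rho(K)^2\bigl(1-\|x_{H-u}\|^2\bigr)+2y_uz_u .
\]
Since $\rho(H)^2\ge\Delta(H)$, we have $y_u=\sum_{a\in N_H(u)}x_a\le\sqrt{d_H(u)}\,\|x|_{N_H(u)}\|\le\sqrt{d_H(u)}\,\|x_{H-u}\|\le\rho(H)\|x_{H-u}\|$, and likewise $z_u\le\rho(K)\|x_{K-u}\|$. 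AM--GM then gives $2y_uz_u\le\rho(K)^2\|x_{H-u}\|^2+\rho(H)^2\|x_{K-u}\|^2$, and $\rho^2\le\rho(H)^2+\rho(K)^2$ follows.

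For equality, both factor bounds must be tight, since $y_u,z_u>0$. Because $x>0$, tightness of the middle step forces $V(H)=N_H[u]$. Tightness of the last step forces $d_H(u)=\rho(H)^2$, which rules out any further edge by Lemma~\ref{lem::2.2}. So $H$ is a star centred at $u$, and likewise $K$; conversely, $K_{1,p}*K_{1,q}=K_{1,p+q}$.

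One correction to your side remark: a trivial factor $K=K_1$ gives equality for every $H$, so it must be excluded (or $K_1$ counted as a star), but that is not a root issue. The genuine root issue is, for example, $K_{1,2}$ rooted at a leaf together with $K_2$, which yields $P_4$ and strict inequality.
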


Edge-switching method is one of the most important tools in spectral graph theory. The following lemma states a widely used edge-switching operation.

\begin{lem}[See \cite{B.W}] \label{lem::2.4}
Let $G$ be a connected graph, and let $u$, $v$ be two vertices of $G$. Suppose $\{v_1, v_2, \ldots, v_s\} \subseteq N_{G}(v)
\setminus N_{G}(u)(1 \leq s \leq d_G(v))$ and  $G^*$ is the graph obtained from $G$ by adding the edges $uv_i$ and deleting
the edges $vv_i$, where $1 \leq i \leq s$. If $x_u \geq x_v$, then $\rho(G)<\rho(G^*)$.
\end{lem}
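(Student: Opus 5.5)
The plan is to compare Rayleigh quotients using the Perron vector of $G$, and then to rule out equality by an eigen-equation at the vertex $v$. We assume, as is implicit in the statement, that every $v_i\neq u$, so that $G^*$ is a simple graph. Let $\mathbf{x}$ be the Perron vector of $A(G)$, normalized so that $\mathbf{x}^T\mathbf{x}=1$. Since $G$ is connected, the Perron--Frobenius theorem gives $x_w>0$ for every $w\in V(G)$ and $A(G)\mathbf{x}=\rho(G)\mathbf{x}$. The graph $G^*$ might be disconnected, but $A(G^*)$ is still real symmetric. The Rayleigh principle therefore gives $\rho(G^*)\geq \lambda_1(A(G^*))\geq \mathbf{x}^TA(G^*)\mathbf{x}$; this holds because $\rho(G^*)$, the largest eigenvalue of a nonnegative symmetric matrix, equals $\lambda_1$.

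The first step is the direct computation. Passing from $G$ to $G^*$ deletes the edges $vv_i$ and adds the edges $uv_i$ for $1\leq i\leq s$, and these new edges are genuinely new because $v_i\notin N_G(u)$. Hence
$$\mathbf{x}^TA(G^*)\mathbf{x}-\mathbf{x}^TA(G)\mathbf{x}=2\sum_{i=1}^{s}x_{v_i}(x_u-x_v)\geq 0,$$
using $x_u\geq x_v$. Consequently $\rho(G^*)\geq \mathbf{x}^TA(G^*)\mathbf{x}\geq \mathbf{x}^TA(G)\mathbf{x}=\rho(G)$.

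The second step, which is the only delicate point, is strictness. Suppose $\rho(G^*)=\rho(G)$. Then every inequality in the chain is an equality, so $\mathbf{x}^TA(G^*)\mathbf{x}=\lambda_1(A(G^*))$. By the equality case of the Rayleigh principle, $\mathbf{x}$ is then an eigenvector of $A(G^*)$ for the eigenvalue $\rho(G)$. We read off the eigen-equations at the vertex $v$, whose neighbourhood in $G^*$ is $N_G(v)\setminus\{v_1,\dots,v_s\}$:
$$\rho(G)x_v=\sum_{w\in N_{G^*}(v)}x_w=\sum_{w\in N_G(v)}x_w-\sum_{i=1}^{s}x_{v_i}=\rho(G)x_v-\sum_{i=1}^{s}x_{v_i}.$$
This forces $\sum_{i=1}^s x_{v_i}=0$. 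That is impossible, since $s\geq1$ and all entries of $\mathbf{x}$ are positive. Therefore $\rho(G)<\rho(G^*)$.

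I expect the main obstacle to be this equality case rather than the inequality itself. One cannot simply invoke positivity of the Perron vector of $G^*$, because $G^*$ need not be connected. The argument above avoids that issue entirely: it uses only the Rayleigh equality characterization together with the positivity of $\mathbf{x}$, which is guaranteed by the connectivity of $G$. Note also that the case $x_u=x_v$ is not excluded by the hypothesis, and it is handled by this same equality analysis rather than by the first computation.
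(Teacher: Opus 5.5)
Your proof is correct and is the standard argument for this lemma. The paper gives no proof of its own and only cites \cite{B.W}, where the argument is the same Rayleigh-quotient comparison using the Perron vector of $G$, with equality ruled out because $\mathbf{x}$ would then be an eigenvector of $A(G^*)$ for $\rho(G)$. The only difference is that you read the eigen-equation at $v$ (which loses $\sum_i x_{v_i}$) rather than at $u$ (which gains it), and the two are equivalent; your explicit assumption $v_i\neq u$ is a hypothesis that the paper's restatement omits.
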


Our one primary tool is sharp upper bound on the spectral radius, which was obtained by Hong, Shu, and Fang \cite{Y.H} and Nikiforov \cite{V.N}, independently.

\begin{lem}[See \cite{Y.H} and \cite{V.N}]\label{lem::2.5}
Let $G$ be a graph on $n$ vertices and $m$ edges with minimum degree 
$\delta\geq 1$. Then $$\rho(G)\leq\frac{\delta-1}{2}+\sqrt{2m-n\delta+\frac{(\delta+1)^2}{4}},$$ 
with equality if and only if $G$ is a $\delta$-regular graph or a bidegreed graph in which each vertex is of degree either $\delta$ or $n-1$.
\end{lem}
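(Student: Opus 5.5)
Since $G$ has minimum degree $\delta\ge 1$, there are no isolated vertices, so I may assume $G$ is connected; otherwise I would apply the argument to a component attaining $\rho(G)$, where the bound only gets better because that component has fewer edges and vertices and the same or larger minimum degree. The bound is equivalent to
$$\rho^2-(\delta-1)\rho\le 2m-n\delta+\delta ,$$
since the larger root of $t^2-(\delta-1)t-(2m-n\delta+\delta)=0$ is exactly the stated expression. So the plan is to prove this quadratic inequality by evaluating $A^2-(\delta-1)A$ against the Perron vector. Let $\mathbf x>0$ be the Perron vector of $A(G)$, scaled so that $x_u=\max_v x_v=1$. Then $A^2\mathbf x=\rho^2\mathbf x$, and reading off coordinate $u$ gives
$$\rho^2-(\delta-1)\rho=d_u+\sum_{w\ne u}c(u,w)\,x_w-(\delta-1)\sum_{w\sim u}x_w,\qquad c(u,w)=|N(u)\cap N(w)|.$$

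The first key step is the total count $\sum_{w\ne u}c(u,w)=\sum_{v\sim u}(d_v-1)$. If every term could be bounded using $x_w\le 1$, the right-hand side would be at most $\sum_{v\sim u}d_v-(\delta-1)d_u$. The second step bounds the neighbour degree sum by removing everything outside $N(u)$: the vertex $u$ contributes $d_u$, and each of the $n-1-d_u$ non-neighbours contributes at least $\delta$. Hence
$$\sum_{v\sim u}d_v\le 2m-d_u-(n-1-d_u)\delta .$$
Substituting, the $d_u$ terms cancel exactly, and I get $2m-n\delta+\delta$, as required.

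The main obstacle is justifying ``$x_w\le1$'' for the neighbours $w\sim u$. Their net coefficient is $c(u,w)-(\delta-1)$, which can be negative, and then the bound $x_w\le 1$ goes in the wrong direction. My first attempt would avoid working at a single vertex. Instead I would weight coordinates by $\mathbf x$ itself, i.e. consider $\mathbf x^{T}(A^2-(\delta-1)A)\mathbf x=\sum_v(d_v^{*})$-type expressions as in Hong's original argument. Alternatively, I would use the Collatz--Wielandt bound with a diagonal similarity $D^{-1}(A^2-(\delta-1)A+cI)D$, choosing $D$ so that all entries are nonnegative and every row sum is at most $2m-n\delta+\delta+c$. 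Concretely, for each vertex $w$ I would rewrite $\sum_{v\sim w}d_v-(\delta-1)d_w$ and bound it by the same cancellation as above, applied row by row. This works because the row sum of $A^2-(\delta-1)A$ at $w$ equals $\sum_{v\sim w}d_v-(\delta-1)d_w\le 2m-n\delta+\delta$ for every $w$. The remaining task is then to check that the maximum row-sum bound legitimately controls $\rho^2-(\delta-1)\rho$, which is the largest eigenvalue of this polynomial in $A$ because $t^2-(\delta-1)t$ is increasing for $t\ge(\delta-1)/2$ and $\rho\ge\delta$. I expect the sign issue to be resolved by exactly this row-sum formulation.

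For the equality case, every inequality must be tight. This requires each non-neighbour of every vertex attaining the maximum to have degree exactly $\delta$. It also requires the row sums to be constant, which forces $G$ to be $\delta$-regular, or bidegreed with degrees $\delta$ and $n-1$. Conversely, direct computation on these graphs, for instance via an equitable partition and Lemma \ref{lem::2.1}, shows that equality holds.
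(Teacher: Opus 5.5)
Your row-sum computation is correct: the row sum of $M=A^2-(\delta-1)A$ at $w$ is $\sum_{v\sim w}d_v-(\delta-1)d_w\le 2m-(n-1)\delta$, by exactly the cancellation you describe. The gap is the step you leave as the ``remaining task'', and each justification you offer for it fails.

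\begin{itemize}
\item A diagonal similarity $D^{-1}MD$ preserves the sign of every off-diagonal entry, and adding $cI$ only changes the diagonal. So no choice of $D$ removes the negative entries $M_{ij}=|N(i)\cap N(j)|-(\delta-1)$ on edges lying in fewer than $\delta-1$ triangles.
\item For a matrix with negative entries, the maximum row sum does not bound the largest eigenvalue.
\item $\rho^2-(\delta-1)\rho$ need not be the largest eigenvalue of $M$. Monotonicity of $t^2-(\delta-1)t$ on $[(\delta-1)/2,\infty)$ says nothing about the negative eigenvalues of $A$.
\item Your other suggestion, the quadratic form $\mathbf{x}^T M\mathbf{x}$, runs into the same sign problem.
\end{itemize}
The second and third failures already occur for $C_4$ with $\delta=2$: every row sum of $A^2-A$ equals $2$, yet $A^2-A$ has the eigenvalue $(-2)^2+2=6$.

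The missing idea is to pair the eigen-equation with the all-ones vector. This is the standard argument behind the cited result; the paper itself gives no proof. Since $M$ is symmetric and $M\mathbf{x}=(\rho^2-(\delta-1)\rho)\mathbf{x}$ with $\mathbf{x}\ge 0$, $\mathbf{x}\ne 0$,
\[
(\rho^2-(\delta-1)\rho)\,\mathbf{1}^T\mathbf{x}=\mathbf{1}^T M\mathbf{x}=(M\mathbf{1})^T\mathbf{x}=\sum_{w}x_w\Big(\sum_{v\sim w}d_v-(\delta-1)d_w\Big)\le\big(2m-(n-1)\delta\big)\,\mathbf{1}^T\mathbf{x}.
\]
This is a positively weighted average of row sums. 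The signs of the entries of $M$ are irrelevant, and you only need $\rho^2-(\delta-1)\rho$ to have a nonnegative eigenvector, not to be the top eigenvalue.

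The argument needs no connectivity, so your reduction to a component is unnecessary for the inequality. Its stated justification is also incomplete: deleting vertices with $m$ fixed increases $2m-n\delta$, so what you actually need is $2(m-m_1)\ge(n-n_1)\delta$.

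For equality you do need $G$ connected, so that $\mathbf{x}>0$ forces every row sum to equal $2m-(n-1)\delta$. That means every non-neighbour of every vertex has degree $\delta$, so any vertex of degree larger than $\delta$ is adjacent to all others. Conversely, for such graphs $M\mathbf{1}=(2m-(n-1)\delta)\mathbf{1}$, and the identity above gives equality.

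Without connectivity the equality clause is false. For example, $K_3\cup K_2$ ($n=5$, $m=4$, $\delta=1$) has $\rho=2$, equal to the bound, yet it is neither regular nor has degrees in $\{1,4\}$. So no reduction to a component can prove the equality clause as you state it. The lemma should be read for connected graphs, which is how Hong--Shu--Fang state it and how the paper uses it.
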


\begin{lem}[See \cite{K.H}] \label{lem::2.6}
Let $k\geq 2$ be an integer, and let $G$ be a connected graph. The following are equivalent.

\noindent $(i)$ For any edge $e \in E(G)$, $\kappa'_k (G-e)=k-1$.

\noindent $(ii)$ $G$ is minimally $(k,k)$-edge-connected.
\end{lem}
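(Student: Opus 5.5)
The plan is to prove both implications directly from the definition of $\kappa'_k$. The one tool I need is the following monotonicity fact: deleting a single edge lowers the $k$-edge-connectivity by at most one. Throughout I assume $|V(G)|\geq k$. This is implicit in the lemma, because if $|V(G)|<k$ then $\kappa'_k(G)=|V(G)|<k$, so $G$ is not $(k,k)$-edge-connected. I read $\kappa'_k(G-e)$ as the minimum number of edges of $G-e$ whose removal leaves at least $k$ components, even when $G-e$ happens to be disconnected.

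\emph{Step 1 (monotonicity).} For every $e\in E(G)$ I will show
$$\kappa'_k(G)-1\leq \kappa'_k(G-e)\leq \kappa'_k(G).$$
For the left inequality, take $F\subseteq E(G-e)$ of minimum size such that $(G-e)-F$ has at least $k$ components. Then $F\cup\{e\}$ is an edge set of $G$ with $G-(F\cup\{e\})=(G-e)-F$. Hence $\kappa'_k(G)\leq |F|+1$. For the right inequality, take a minimum set $F'\subseteq E(G)$ such that $G-F'$ has at least $k$ components. The set $F'\setminus\{e\}$ works in $G-e$, since $(G-e)-(F'\setminus\{e\})=G-F'$. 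Hence $\kappa'_k(G-e)\leq |F'\setminus\{e\}|\leq |F'|=\kappa'_k(G)$.

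\emph{Step 2: (ii)$\Rightarrow$(i).} Suppose $G$ is minimally $(k,k)$-edge-connected. Then $\kappa'_k(G)\geq k$, and by definition $\kappa'_k(G-e)<k$ for every edge $e$. Step 1 gives $\kappa'_k(G-e)\geq \kappa'_k(G)-1\geq k-1$. Combining the two bounds yields $\kappa'_k(G-e)=k-1$.

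\emph{Step 3: (i)$\Rightarrow$(ii).} Once I show $\kappa'_k(G)\geq k$, minimality follows at once from (i), because $\kappa'_k(G-e)=k-1<k$ for every edge $e$. So suppose instead that $\kappa'_k(G)\leq k-1$, and let $F$ be a minimum edge set such that $G-F$ has at least $k$ components. Since $G$ is connected and $k\geq 2$, the set $F$ is nonempty. Pick $e\in F$. Then $F\setminus\{e\}\subseteq E(G-e)$ and $(G-e)-(F\setminus\{e\})=G-F$, which has at least $k$ components. Therefore $\kappa'_k(G-e)\leq |F|-1\leq k-2$, contradicting (i).

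I expect no real mathematical obstacle here; the only care needed is bookkeeping of the degenerate conventions. These are the case $|V(G)|<k$, where $\kappa'_k$ is defined as the order, and the fact that $G-e$ may be disconnected, so that $\kappa'_k(G-e)$ must be read via the component-count definition. The argument above only ever uses that definition, so it goes through once $|V(G)|\geq k$ is fixed.
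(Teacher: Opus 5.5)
Your proof is correct under the assumption $|V(G)|\geq k$. The paper gives no argument of its own here, since the lemma is quoted from Hennayake, Lai, Li and Mao. Your route is the standard elementary proof: for (ii)$\Rightarrow$(i), deleting one edge lowers $\kappa'_k$ by at most one; for (i)$\Rightarrow$(ii), you delete an edge of a minimum $k$-component cut of size at most $k-1$.

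One small correction concerns your justification that $|V(G)|\geq k$ is ``implicit''. Your reason only shows that (ii) fails when $|V(G)|<k$, not that (i) does. For example, $G=K_1$ satisfies (i) vacuously, while $\kappa'_k(K_1)=1<k$, so (ii) fails. Hence $|V(G)|\geq k$ is a genuine extra hypothesis of the lemma, not a consequence of (i), and you should state it as such.

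This is harmless for the paper. Every application of the lemma there is in the direction (ii)$\Rightarrow$(i), and there $\kappa'_k(G)\geq k$ already forces $|V(G)|\geq k$.
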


\begin{lem}[See \cite{K.H}] \label{lem::2.7}
Let $G$ be a connected graph with a bridge set $B(G)$, and let $k\geq 2$ be an integer.  Each of the following holds.

\noindent $(i)$ If $G$ is $(k, k)$-edge-connected, then $|B(G)|\leq k-2$.

\noindent $(ii)$ If $G$ is minimally $(k, k)$-edge-connected, then for any $e \in E(G)$, $G$ has an edge subset $T \subseteq E(G)$ with $|T|=k$ and with $e \in T$, such that $B(G) \subset T$ and such that $G-T$ has $k$ components.

\noindent $(iii)$ If $G$ is $(k, k)$-edge-connected, then for any $T \subset E(G)$ with $B(G) \subset T$ and with $|T|=k$ such that $G-T$ has $k$ components, there exists exactly one $i$ with $1 \leq i \leq s$ such that $T-B(G) \subseteq E(H_i)$.

\noindent $(iv)$ Suppose that $v \in V(G)$ is a vertex of degree 1 in $G$. Then $G$ is minimally $(k, k)$-edge-connected if and only if $G-v$ is minimally $(k\!-\!1, k\!-\!1)$ edge-connected.
\end{lem}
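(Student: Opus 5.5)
Lemma~\ref{lem::2.7} has four parts, and all of them rest on one counting fact. For $X\subseteq E(G)$ write $c(G-X)$ for the number of components of $G-X$. Deleting a single edge raises the number of components by at most one, so $c(G-X)\le |X|+1$ for connected $G$, and $G$ is $(k,k)$-edge-connected exactly when $c(G-X)\le k-1$ for every $X$ with $|X|=k-1$. I will use this throughout.

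\emph{Part (i).} Delete the bridges one at a time; each deletion creates a new component. If $|B(G)|\ge k-1$, then some $k-1$ bridges leave $k$ components, so $\kappa'_k(G)\le k-1$, a contradiction.

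\emph{Part (ii).}
\begin{itemize}
\item[] Fix $e\in E(G)$. By Lemma~\ref{lem::2.6}, $\kappa'_k(G-e)=k-1$, so there is a set $S\subseteq E(G)\setminus\{e\}$ with $|S|=k-1$ and $c(G-e-S)\ge k$. Since $c(G-S)\le k-1$, the set $T=S\cup\{e\}$ satisfies $c(G-T)=k$ exactly.
\item[] Next I force $B(G)\subseteq T$ by exchange. Suppose a bridge $f\notin T$. Then $X=T\cup\{f\}$ has $|X|=k+1$ and $c(G-X)=k+1$. Hence every edge of $X$ is ``tight'': putting back any single $t\in X$ drops the count to exactly $k$.
\item[] Pick $t\in T\setminus\{e\}$ that is not a bridge. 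Such a $t$ exists because $|T\setminus\{e\}|=k-1>k-2\ge|B(G)|$ by part~(i). Replace $T$ by $T-t+f$; this still has $k$ edges, contains $e$, and leaves $k$ components.
\item[] Each exchange strictly increases $|T\cap B(G)|$, so iterating ends with $B(G)\subseteq T$.
\end{itemize}

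\emph{Part (iii).} Let $H_1,\dots,H_s$ be the components of $G-B(G)$, so $s=|B(G)|+1$, and each $H_i$ is bridgeless.
\begin{itemize}
\item[] Put $T_i=(T\setminus B(G))\cap E(H_i)$ and $t_i=|T_i|$. Removing $t_i\ge1$ edges from a connected bridgeless graph leaves at most $t_i$ components: the first deletion creates none, and each later one creates at most one.
\item[] Going from $G-B(G)$ to $G-T$ must create $k-s=k-|B(G)|-1$ new components. With $r$ the number of indices having $t_i\ge 1$, we obtain
\[
k-|B(G)|-1\le \sum_{t_i\ge1}(t_i-1)=k-|B(G)|-r .
\]
\item[] This gives $r\le1$. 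Also $r\ge1$, because $|T\setminus B(G)|=k-|B(G)|\ge2$. So $T\setminus B(G)$ lies in exactly one $H_i$.
\end{itemize}

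\emph{Part (iv).} Let $uv$ be the pendant edge at $v$, and put $H=G-v$.
\begin{itemize}
\item[] For $X\subseteq E(H)$ we have $c(G-X)=c(H-X)$ and $c(G-X-uv)=c(H-X)+1$. Optimizing over whether $uv$ is used gives
\[
\kappa'_k(G)=\min\{\kappa'_{k-1}(H)+1,\ \kappa'_k(H)\}=\kappa'_{k-1}(H)+1 .
\]
The last equality holds because $\kappa'_k(H)\ge\kappa'_{k-1}(H)+1$, by the one-component-per-edge fact.
\item[] The same identity holds with $G-e$ and $H-e$ for any $e\ne uv$, since $v$ is still pendant in $G-e$. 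Hence $\kappa'_k(G)\ge k$ iff $\kappa'_{k-1}(H)\ge k-1$, and $\kappa'_k(G-e)<k$ iff $\kappa'_{k-1}(H-e)<k-1$.
\item[] For $e=uv$ itself, $G-uv=H\cup\{v\}$, so $\kappa'_k(G-uv)\le\kappa'_{k-1}(H)$. When $\kappa'_{k-1}(H)=k-1$, this gives $\kappa'_k(G-uv)<k$.
\item[] It remains to compare the conditions. Assume $\kappa'_{k-1}(H)\ge k-1$. If $H$ is minimally $(k-1,k-1)$-edge-connected, then deleting any edge of $H$ drops $\kappa'_{k-1}$ below $k-1$, which forces $\kappa'_{k-1}(H)=k-1$; by the identity and the $uv$ case, $G$ is minimal. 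If $G$ is minimally $(k,k)$-edge-connected, then deleting any $e\ne uv$ drops $\kappa'_k(G-e)$ below $k$, and the identity gives $\kappa'_{k-1}(H-e)<k-1$; so $H$ is minimal.
\end{itemize}

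The main obstacle I expect is the exchange step in part~(ii). One must check that replacing a non-bridge $t\in T$ by the bridge $f$ keeps exactly $k$ components, and that a non-bridge $t\ne e$ is always available; the bound $|B(G)|\le k-2$ from part~(i) guarantees the latter.
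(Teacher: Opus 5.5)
The paper does not prove Lemma~\ref{lem::2.7}; it quotes it from Hennayake, Lai, Li and Mao~\cite{K.H}. So there is no in-paper argument to match, and your self-contained proof is correct.

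Compared with the bare citation, your route shows how little is needed. Everything follows from two facts: deleting an edge raises the component count by at most one, and bridges of $G$ remain bridges in subgraphs. Parts (ii) and (iii) rest only on (i), and (iv) is just the identity $\kappa'_k(G)=\kappa'_{k-1}(G-v)+1$, applied to $G$ and to each $G-e$.

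The exchange step you flagged in (ii) is sound, and the reason deserves to be stated explicitly. Since $|X-t|=k$, $(k,k)$-edge-connectivity gives $c(G-(X-t))\le (k-1)+1=k$. On the other side, restoring one edge lowers $c(G-X)=k+1$ by at most one.

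In (iii) you had to supply the meaning of $H_1,\dots,H_s$, which the statement as quoted never defines. Reading them as the components of $G-B(G)$ is the intended one.

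Two minor remarks.
\begin{itemize}
\item In (ii) you can bypass Lemma~\ref{lem::2.6}, which also comes from~\cite{K.H}. Minimality gives some $S$ with $|S|\le k-1$ and $c(G-e-S)\ge k$, and $|S|<k-1$ would contradict $\kappa'_k(G)\ge k$.
\item Your reformulation of $(k,k)$-edge-connectivity through $c(G-X)\le k-1$ fails for $|V(G)|<k$ under the paper's convention $\kappa'_l(G)=|V(G)|$. However, you only use the forward direction, and the identity in (iv) still holds in those small cases, so nothing breaks. For $k=2$, part (iv) is degenerate anyway, since a graph with a pendant edge is never $(2,2)$-edge-connected.
\end{itemize}
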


A path $P$ in a graph $G$ is called an \emph{internal path} if all vertices of $P$ have degree two except for the two end vertices. Xue, Liu and Shu \cite{J.X} presented the unimodality of principal eigenvector on internal paths.

\begin{lem}  [See \cite{J.X}]\label{lem::2.8} 
Let $P:=v_0 v_1 \cdots v_k$ be an internal path in a connected graph $G$ and $x$ be a perron eigenvector of $G$. If $\rho(G)>2$, then the following statements hold:

\noindent (i) If $x_0=x_k$, then $x_0>x_1>\cdots>x_{\lfloor k/2\rfloor}=x_{\lceil k / 2\rceil}<\cdots<x_{k-1}<x_k$ and $x_i=x_{k-i}$ for $0 \leq i \leq k$.

\noindent (ii)  Suppose $x_k>x_0$. If $x_0 \leq x_1$, then $x_0 \leq x_1<x_2<\cdots<x_k$. If $x_0>x_1$, then $x_{k-i}>x_i$ for $1 \leq i \leq\lceil k / 2\rceil-1$, and there exists an integer $1 \leq j \leq\lceil k / 2\rceil-2$ such that $x_0>x_1>\cdots>x_j \geq x_{j+1}<\cdots<x_k$.
\end{lem}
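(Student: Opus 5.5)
The plan is to solve the eigen-equations along the path explicitly. Since $P$ is internal, each interior vertex $v_i$ ($1\le i\le k-1$) has exactly the neighbours $v_{i-1},v_{i+1}$. Hence the Perron vector satisfies the linear recurrence
$$\rho x_i=x_{i-1}+x_{i+1}\qquad (1\le i\le k-1).$$
Because $\rho>2$, we can write $\rho=t+t^{-1}$ with $t>1$. The general solution is then
$$x_i=a t^i+b t^{-i}\qquad (0\le i\le k)$$
for real constants $a,b$ fixed by $x_0$ and $x_k$. It is convenient to extend this to the real function $f(s)=ae^{cs}+be^{-cs}$ with $c=\ln t>0$, so that $x_i=f(i)$. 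All the monotonicity statements then reduce to the shape of $f$ on $[0,k]$.

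The key structural observation is that $f'(s)=c(ae^{cs}-be^{-cs})$ has at most one real zero. So $f$ is either monotone on $[0,k]$ or has a single interior critical point $s^*$. Strict monotonicity of $f$ on an interval gives strict monotonicity of the integer samples, which yields every chain of inequalities we need.

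\textbf{Case (i).} Suppose $x_0=x_k$. Then $a(t^k-1)=b(1-t^{-k})$, so $b=at^k$ and $x_i=a(t^i+t^{k-i})$. Positivity forces $a>0$. Now $f(s)=a(t^s+t^{k-s})$ is symmetric about $k/2$, strictly decreasing on $[0,k/2]$ and strictly increasing on $[k/2,k]$. This gives $x_i=x_{k-i}$ and the stated chain; the middle equality $x_{\lfloor k/2\rfloor}=x_{\lceil k/2\rceil}$ is just the symmetry.

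\textbf{Case (ii), $x_0\le x_1$.} Suppose $x_k>x_0$ and $x_0\le x_1$. If $f$ is monotone it must be increasing, and since $t>1$ it is strictly increasing after index $0$; this gives $x_0\le x_1<x_2<\cdots<x_k$. If instead $f$ has an interior minimum at $s^*$, then $f(0)\le f(1)$ places $s^*\le 1/2$, so $f$ is strictly increasing on $[1,k]$. (If $f$ had an interior maximum, then $f(0)\le f(1)$ together with $f(k)>f(0)$ still gives the conclusion for $i\ge 1$, or a contradiction with positivity; I would check this sub-case carefully.)

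\textbf{Case (ii), $x_0>x_1$.} Here $f$ decreases at the start and ends higher than it starts, so it must have a minimum at some interior $s^*$. This forces $a,b>0$, and we can write $f(s)=C\cosh(c(s-s^*))$ with $C>0$. This function is symmetric about $s^*$. Hence $f(k-i)>f(i)$ if and only if $|k-i-s^*|>|i-s^*|$, which for $i<k/2$ is equivalent to $s^*<k/2$. That condition follows from $f(k)>f(0)$, giving $x_{k-i}>x_i$. Taking $j=\lfloor s^*\rfloor$ gives $x_0>\cdots>x_j$, then $x_j\ge x_{j+1}$ (equality exactly when $s^*=j+\tfrac12$), then strict increase.

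The main obstacle I expect is the bookkeeping in this last sub-case: pinning down the exact range $1\le j\le \lceil k/2\rceil-2$. The lower bound $j\ge1$ should come from $x_0>x_1$, which forces $s^*>1/2$. The upper bound should come from combining $s^*<k/2$ with the requirement that $x_{j+1}<x_{j+2}\le\cdots$, taking care of the parity of $k$. Boundary cases where the minimum lands near the centre will need a small direct check.
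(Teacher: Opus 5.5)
The paper does not prove this lemma; it quotes it from Xue--Liu--Shu. There is a gap in your proposal, in the last clause of (ii), and that clause cannot be proved because as transcribed it is false.

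\textbf{What works.} Your closed form $x_i=f(i)$ with $f(s)=ae^{cs}+be^{-cs}$ is the right tool. It correctly gives (i), the subcase $x_0\le x_1$ of (ii), and $x_{k-i}>x_i$, the last via $(k-2i)(k-2s^*)>0$. Your worry about an interior maximum is unfounded: $f'(s)=0$ forces $ae^{2cs}=b$. So $a$ and $b$ have the same sign, hence both are positive since $a+b=x_0>0$, and then $f$ is strictly convex.

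\textbf{First error: wrong index.} The choice $j=\lfloor s^*\rfloor$ does not work. Since $x_i\ge x_{i+1}$ if and only if $s^*\ge i+\tfrac{1}{2}$, the step $x_j\ge x_{j+1}$ fails whenever $s^*-\lfloor s^*\rfloor<\tfrac{1}{2}$. For example, at $s^*=2.3$ you get $x_2<x_3$. The index is in fact forced: you need $j+\tfrac{1}{2}\le s^*<j+\tfrac{3}{2}$, that is, $j=\lfloor s^*-\tfrac{1}{2}\rfloor$.

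\textbf{Second error: the bounds on $j$.} The condition $s^*>\tfrac{1}{2}$ does not give $j\ge 1$. For $\tfrac{1}{2}<s^*<\tfrac{3}{2}$ one has $x_0>x_1<x_2$, so necessarily $j=0$. From $\tfrac{1}{2}<s^*<\tfrac{k}{2}$ you get exactly $0\le j\le\lfloor k/2\rfloor-1$. For even $k$ the value $j=k/2-1$ occurs whenever $s^*\in(\tfrac{k-1}{2},\tfrac{k}{2})$, which already exceeds $\lceil k/2\rceil-2$.

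\textbf{Counterexample to the stated range.} The range $1\le j\le\lceil k/2\rceil-2$ is empty for $k=3,4$, yet the hypotheses can hold there. Take $K_5-v_0w$ and add a new path $v_0v_1v_2v_3$ to a vertex $v_3\neq w$, so $\rho>1+\sqrt7>3$. A short computation with the eigen-equations gives $(\rho+1-\tfrac{1}{\rho+1})(x_3-x_0)=x_w>0$ and $x_1<x_0$. So this internal path has $x_3>x_0>x_1$, and no admissible $j$ exists.

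\textbf{What to do instead.} No parity bookkeeping will close this step. Your method proves the corrected statement with $0\le j\le\lfloor k/2\rfloor-1$ immediately. State and prove that version, and note that the lemma as quoted in the paper is misstated.
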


As we know, a block of a graph is a maximal 2-connected subgraph with respect to vertices. A block of a graph is called \emph{leaf block} if it contains exactly one cut vertex.

\begin{lem}[See \cite{Z.Z}] \label{lem::2.9}
If $G$ is a minimally $(2, 2)$-edge-connected graph, then no cycle of $G$ has a chord.
\end{lem}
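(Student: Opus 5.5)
The plan is a direct contradiction argument, using only the definition and the fact that for $l=2$ the $l$-edge-connectivity is ordinary edge-connectivity. Deleting edges from a connected graph creates at least two components exactly when the graph becomes disconnected, so $\kappa'_2(G)=\kappa'(G)$ whenever $|V(G)|\geq 2$. Hence "minimally $(2,2)$-edge-connected" means: $G$ is $2$-edge-connected, and for every edge $e$ the graph $G-e$ is not $2$-edge-connected. I will take a cycle $C$ with a chord $e=uv$ and show that $G-e$ is still $2$-edge-connected, which contradicts minimality.

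\textbf{Setup.} Let $C$ be a cycle of $G$ and let $e=uv$ be a chord of $C$, so $u,v\in V(C)$ and $e\notin E(C)$. The vertices $u$ and $v$ split $C$ into two internally disjoint $u$--$v$ paths $P_1$ and $P_2$, each of length at least $2$ since $e$ is a chord. Then $P_1+e$ and $P_2+e$ are cycles containing $e$, and $C$ itself is a cycle avoiding $e$. In particular $G-e$ is connected, because $G$ is connected and $e$ lies on a cycle.

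\textbf{Key step.} Let $f$ be any edge of $G-e$. I need $G-e-f$ to be connected. Since $G$ is $2$-edge-connected, $G-f$ is connected. So it suffices to show that $e$ is not a bridge of $G-f$, which holds if some cycle of $G-f$ passes through $e$. There are two cases.
\begin{itemize}
\item If $f\notin E(C)$, then $P_1+e$ is such a cycle.
\item If $f\in E(C)$, then $f$ lies on exactly one of $P_1,P_2$, say on $P_1$. Then $P_2+e$ is a cycle of $G-f$ through $e$.
\end{itemize}
In either case $G-e-f$ is connected. Hence $G-e$ is $2$-edge-connected, that is, $\kappa'_2(G-e)\geq 2$, contradicting the minimality of $G$. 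Therefore no cycle of $G$ has a chord.

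\textbf{Main obstacle.} The only point needing care is the initial translation of $\kappa'_2$ into ordinary edge-connectivity. After that, the argument reduces to the cycle-avoiding-$f$ case check above, which is elementary. The paper's alternative characterization of minimality, Lemma~\ref{lem::2.6}, could be used instead, but it is not needed.
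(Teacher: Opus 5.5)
Your proof is correct and complete. The paper does not prove this lemma; it imports it from Lou, Min and Huang \cite{Z.Z}. So there is no in-paper argument to compare with, and your proof supplies a self-contained justification that uses only the definition.

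Both ingredients are the right ones:
\begin{itemize}
\item For a connected graph on at least two vertices, $\kappa'_2$ coincides with ordinary edge-connectivity. This is the paper's own remark that minimally $(2,2)$-edge-connected means minimally $2$-edge-connected.
\item A chord $e$ lies on the two cycles $P_1+e$ and $P_2+e$, whose edge sets meet only in $e$. A single edge $f\neq e$ can therefore destroy at most one of them. Hence $e$ is never a bridge of the connected graph $G-f$, so $G-e-f$ is connected for every $f$.
\end{itemize}

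In the language of the paper's Lemma~\ref{lem::2.6} with $k=2$, you are showing that $\kappa'_2(G-e)=2\neq 1$ whenever $e$ is a chord.

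Your argument actually proves slightly more. Any edge that lies on two cycles sharing no edge other than itself can be deleted while preserving $2$-edge-connectivity. A chord of a cycle is just the most visible instance of this.
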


\begin{cor}\label{cor::1}
Let $G$ be a minimally $(k, k)$-edge-connected graph and $B\subset V(G)$. For $l\leq k$, if $G[B]$ is a block and it is a minimally $(l, l)$-edge-connected, then no cycle of $G$ has an internal path with length less than $l$.
\end{cor}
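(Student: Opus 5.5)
The plan is to reduce to the block $H:=G[B]$ and argue by contradiction, following the pattern of Lemma~\ref{lem::2.9}; the case $l=2$ is exactly that lemma, since a chord is an internal path of length $1$. Suppose a cycle $C$ of $H$ contains an internal path $P=v_0v_1\cdots v_t$ with $t<l$, whose interior vertices have degree $2$. Let $Q$ be the other $v_0$--$v_t$ path on $C$. Set $R:=H-\{v_1,\dots,v_{t-1}\}$. Since $H$ is $2$-connected and the interior vertices of $P$ have degree $2$, the graph $R$ is connected: it is still a block, or at least connected through $Q$. The goal is to exhibit an edge $f\in E(P)$ with $\kappa'_l(H-f)\ge l$. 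That contradicts Lemma~\ref{lem::2.6}, applied with $k=l$ to the minimally $(l,l)$-edge-connected graph $H$.

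The core is a component count. Fix $f\in E(P)$ and any $T'\subseteq E(H-f)$ with $|T'|=l-1$. Write $j=|T'\cap E(P)|$, so $j+1$ edges of $P$ are removed in total.
\begin{itemize}
\item Deleting $j+1$ edges of the path $P$ produces at most $j$ inner segments that are detached from both $v_0$ and $v_t$.
\item Deleting the remaining $l-1-j$ edges from the connected graph $R$ leaves at most $l-j$ components.
\end{itemize}
Hence $H-f-T'$ has at most $(l-j)+j=l$ components. I must show this bound is never attained, i.e.\ that there are always fewer than $l$ components. Then every $(l-1)$-set fails to cut $H-f$ into $l$ pieces, so $\kappa'_l(H-f)\ge l$, which is the desired contradiction.

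To beat the bound by one, I would split into two cases according to whether $j+1=t$, meaning all of $P$ is removed.
\begin{itemize}
\item If $j+1<t$, some edge of $P$ survives. The surviving end segments then keep an interior vertex attached to $v_0$ or $v_t$, so the count of detached segments drops to at most $j-1$.
\item If $j+1=t$, the hypothesis $t<l$ gives $l-1-j=l-t\ge 1$, so $T'$ removes at least one edge from $R$. Attaining $l-j$ components in $R$ would force every such edge to be a bridge of the successively cut graph. Here I would use that $H$ is minimally $(l,l)$-edge-connected without bridges: by Lemma~\ref{lem::2.7}(ii) and (iii), each $l$-edge cut into $l$ components lies inside one bridgeless piece.
\end{itemize}
In the second case, an exchange argument should then move one of these edges onto the cycle $C$. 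The resulting set would show that the cycle edges, which are all of $P$ together with part of $Q$, absorb too many cut edges. Before concluding, I would also check that the component count in $G$ does not differ from the count in $H$. This holds because $H$ is a block, so the rest of $G$ attaches to $H$ only at cut vertices.

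The main obstacle is precisely this tight case $j+1=t$ with equality $l$ in the count. The naive counting yields exactly $l$ rather than fewer, so the contradiction must come from the structure of minimal cuts in $H$ and not from counting alone. That is where Lemma~\ref{lem::2.7} and the $2$-connectivity of the block must be exploited carefully.
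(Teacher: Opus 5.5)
Your reduction to $H=G[B]$ and the bound of at most $l$ components are correct. But neither case closes, and with $P$ a segment of the cycle $C$ (your reading, and the paper's) they cannot.

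In the case $j+1<t$ the claimed drop is false. Deleting $j+1$ edges of a path always leaves exactly $j$ inner pieces, whether or not an edge of $P$ survives: for $t=3$, deleting $v_0v_1,v_1v_2$ still detaches $\{v_1\}$. So in every case the bound $l$ is attained exactly when the $l-1-j$ deleted edges outside $P$ are bridges of $R$. Here $R$ must be $H$ minus the inner vertices \emph{and} the edges of $P$; for $t=1$ your $R$ still contains $f$. Since $H$ is bridgeless, such bridges separate $v_0$ from $v_t$, and nothing in the hypotheses excludes them. Lemma~\ref{lem::2.7}(ii),(iii) cannot help, because $B(H)=\emptyset$ and the only bridgeless piece is $H$ itself.

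Indeed the statement is false in this reading. For $l\ge 2$ take $l$ disjoint copies of $C_{2l}$, choose antipodal vertices $p_i,q_i$ in the $i$th copy, and add the ring edges $q_ip_{i+1}$ (indices mod $l$).
\begin{itemize}
\item The resulting $H$ is $2$-connected, so deleting $l-1$ edges leaves at most $l-1$ components, and $\kappa'_l(H)\ge l$.
\item Every edge lies in an $l$-edge set whose deletion leaves $l$ components: either the $l$ ring edges, or the $l$ edges of one $p_i$--$q_i$ arc, which isolate its $l-1$ inner vertices and leave the rest connected.
\item Thus $H$ is minimally $(l,l)$-edge-connected (take $G=H$, $k=l$, $B=V(H)$).
\item Yet every ring edge joins two vertices of degree $3$. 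It is therefore an internal path of length $1<l$ lying on the cycle that runs around the ring.
\end{itemize}
For $l=2$ this is two $4$-cycles joined by two edges. This also shows that the case $l=2$ is not Lemma~\ref{lem::2.9}: a chord of a cycle is not an edge of that cycle. The paper's own argument breaks at the same point. Its closing claim that $B'=G[B]-v_0v_1$ cannot have $l-1$ cut edges fails here, since $B'$ has exactly the other $l-1$ ring edges as cut edges.

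What your count does prove is the chord version. This is what Lemma~\ref{lem::2.9} generalizes and what Claim~2 in the proof of Theorem~\ref{thm::1} actually invokes. Assume $P$ is internally disjoint from a cycle through $v_0$ and $v_t$, so $v_0$ and $v_t$ are joined by two edge-disjoint paths in $R$. Then $R$ is connected. Any bridge of $R$ would leave $v_0,v_t$ on the same side, and hence would be a bridge of $H$; so $R$ is bridgeless. Since $j\le t-1\le l-2$, the set $T'$ deletes $l-1-j\ge 1$ edges of $R$, leaving at most $l-1-j$ components. Together with the $j$ inner pieces of $P$ this gives at most $l-1$ components. Hence $\kappa'_l(H-f)\ge l$, contradicting Lemma~\ref{lem::2.6}. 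No tight case arises and no exchange argument is needed.
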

\begin{proof}
If not, then there exists a cycle in $G$ that contains an internal path $P$ with a length less than $l$. Let $P=v_0v_1\cdots v_p$ where $p< l$, and define $B'=G[B]-v_0v_1$. We have $\kappa'_{l}(B')=l-1$ by Lemma \ref{lem::2.6} and $E_1\subseteq B'$ according to $(iii)$ of Lemma \ref{lem::2.7}. Now suppose $E_1\cup v_0v_1$ is an $l$-edge cut of $G[B]$ with $|E_1|=l-1$. Every edge in $|E_1|$ is a cut edge of $B'$ since $B'$ is a connected graph. However, because $G[B]$ is a block and all vertices not in the internal path $P-v_0v_1$ have degree at least $2$, $B'$ cannot have $l-1$ cut-edges, a contradiction.
\end{proof} 

\begin{lem} [See \cite{W.M}]\label{lem::2.10}
Every minimally $(2,2)$-edge-connected graph of order $n\geq 3k-2$ has at
most $2(n-2)$ edges.
\end{lem}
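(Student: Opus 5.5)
The plan is to reduce to the $2$-connected case through the block decomposition and then count edges along an ear decomposition. The reduction works because a minimally $(2,2)$-edge-connected graph is a minimally $2$-edge-connected graph. I read the hypothesis $n\geq 3k-2$ with $k=2$ as $n\geq 4$, which is needed because the triangle has $3>2(3-2)$ edges.

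\textbf{Step 1 (blocks).} Since $G$ is $2$-edge-connected it has no bridges, so every block of $G$ is $2$-connected. Every $2$-edge cut of $G$ lies inside a single block, as in $(iii)$ of Lemma~\ref{lem::2.7}. Hence each block $B_i$ is itself minimally $2$-edge-connected: deleting an edge $e$ of $B_i$ destroys $2$-edge-connectivity of $G$ through a bridge of $G-e$, and that bridge lies in $B_i-e$. By Lemma~\ref{lem::2.9}, no cycle of $B_i$ has a chord.

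\textbf{Step 2 (one block).} Let $B$ be $2$-connected and chordless with $n_B$ vertices and $m_B$ edges. Take an ear decomposition of $B$ whose initial cycle $C$ is a longest cycle, of length $c$.
\begin{itemize}
\item Every ear has length $\ell\geq 2$. If some ear were a single edge $uv$, then $B-uv$ is still $2$-connected, so $u$ and $v$ lie on a common cycle of $B-uv$, and $uv$ would be a chord of it.
\item An ear of length $\ell$ adds $\ell$ edges and $\ell-1\geq 1$ vertices, so it adds at most $2$ edges per new vertex.
\item Summing over all ears gives $m_B\leq c+2(n_B-c)=2n_B-c$.
\end{itemize}
If $c=3$, then $B$ has no cycle longer than a triangle; being $2$-connected and chordless, $B=K_3$. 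In every case $m_B\leq 2n_B-3$, and $m_B\leq 2n_B-4$ whenever $n_B\geq 4$.

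\textbf{Step 3 (summing).} Let $G$ have $b$ blocks with orders $n_1,\dots,n_b$. The block--cut tree gives $\sum_i n_i=n+b-1$. Therefore
\[
m\leq\sum_i (2n_i-3)=2(n+b-1)-3b=2n-2-b.
\]
This is at most $2(n-2)$ when $b\geq 2$. When $b=1$, $G$ is a single block with $n\geq 4$, and Step 2 gives $m\leq 2n-4$ directly.

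\textbf{Main obstacle.} The delicate points are in Steps 1 and 2: checking that minimality really passes to each block, and proving that no ear of length one can occur. The second point is exactly where Lemma~\ref{lem::2.9} (no chords) is used. The rest is bookkeeping.
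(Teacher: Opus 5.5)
Your proof is correct. It necessarily differs from the paper, which gives no argument for Lemma~\ref{lem::2.10} and simply quotes it from the literature. The quoted result is Mader's bound $k(n-k)$ for minimally $k$-edge-connected graphs on $n\ge 3k-2$ vertices, specialized to $k=2$. The leftover ``$3k-2$'' should read $4$, exactly as you interpreted it, and your triangle example shows that this restriction is needed.

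\textbf{Comparison.} The cited theorem gives generality in $k$ and identifies $K_{k,n-k}$ as the extremal graph, but its proof is much harder. Your route is elementary and specific to $k=2$. It uses only Lemma~\ref{lem::2.9}, an ear decomposition started from a longest cycle, and the block count $\sum_i n_i=n+b-1$. It even gives the sharper bound $m\le 2n-2-b$.

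\textbf{What your argument adds.} It shows that the only input is the absence of chorded cycles, which is a hereditary property. Bridge blocks $K_2$ also satisfy $m_i=2n_i-3$, so your Steps~2--3 give $e(S)\le 2|S|-3$ for every vertex set $S$ with $|S|\ge 2$ in such a graph. This is the form in which the proof of Theorem~\ref{thm::2} actually applies Lemma~\ref{lem::2.10}, for example $e(U_0)\le 2|U_0|$ in Claim~7. There $G^*[U_0]$ need not itself be minimally $(2,2)$-edge-connected, so your argument covers a use that the quoted statement, read literally, does not.

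\textbf{A simplification.} Your Step~1 (minimality passing to blocks) is not needed. Bridgelessness of $G$ already makes every block $2$-connected. Any chorded cycle in a block is a chorded cycle in $G$, so Lemma~\ref{lem::2.9} applied directly to $G$ makes every block chordless. The appeal to Lemma~\ref{lem::2.7}$(iii)$ can therefore be dropped.
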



\section{ Proof of Theorem \ref{thm::1}}
In this section, we present the proof of Theorem \ref{thm::1}. While Chen and Guo \cite{X.D.} established the case for $k=2$,  we focus here on the general case where $k\geq 3$.
Denote by $u_1$ a vertex of maximum degree of the friend graph with $t_1$ triangles, $u_2$ a vertex of maximum degree of $K_{2, t_2}$, $u_3$ a vertex of $K_{2, t_3+1}$ with degree 2, $u_4$ a vertex of maximum degree of $K_{1,k-2}$, and $u_5$ a vertex of maximum degree of $K_{1,k-3}$. Let $K_1(t_1, t_2, t_3)$ be the graph obtained from the above four graphs by identifying $u_1, u_2$, $u_3$, and $u_4$, and $K_2(t_1, t_2, t_3)$ be the graph obtained from the above four graphs by identifying $u_1, u_2$, $u_3$, and $u_5$, and  
$K(t_1, t_2)$ be the graph obtained from the above three graphs by identifying $u_1, u_2$ and $u_4$.

\begin{lem} \label{lem::3.1}
If $n\geq 4k$, then
$\rho(K^{k-2}_{2, n-k})=\frac{\sqrt{4n-2k-4+2\sqrt{5k^2-8kn+4n^2-4k+4}}}{2}$. 
\end{lem}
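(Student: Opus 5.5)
The plan is to apply Lemma \ref{lem::2.1} to a natural equitable partition of $K^{k-2}_{2,n-k}$ and compute the largest eigenvalue of the resulting $4\times 4$ quotient matrix explicitly. Write $s=k-2$ and $t=n-k$. Let $a$ be the vertex of $K_{2,t}$ identified with the center of $K_{1,s}$, and let $b$ be the other vertex of degree $t$ in $K_{2,t}$. Let $L$ be the set of $s$ pendant vertices, and let $M$ be the set of $t$ common neighbours of $a$ and $b$. Then $V=\{a\}\cup L\cup M\cup\{b\}$ has $1+s+t+1=n$ vertices, and the partition is equitable. Ordering the parts as $(\{a\},L,M,\{b\})$, the quotient matrix is
$$B_\Pi=\begin{pmatrix}0&s&t&0\\1&0&0&0\\1&0&0&1\\0&0&t&0\end{pmatrix}.$$
By Lemma \ref{lem::2.1}, $\rho(K^{k-2}_{2,n-k})$ equals the largest eigenvalue of $B_\Pi$.

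Next I compute $\det(xI-B_\Pi)$, or equivalently solve the eigen-equations for a positive vector $(x_a,x_L,x_M,x_b)$ with eigenvalue $\rho>0$. The equations are
$$\rho x_L=x_a,\qquad \rho x_b=t x_M,\qquad \rho x_M=x_a+x_b,\qquad \rho x_a=s x_L+t x_M.$$
Eliminating $x_b$ gives $x_M(\rho^2-t)=\rho x_a$. Substituting this and $x_L$ into the equation for $x_a$, and dividing by $x_a>0$, gives $\lambda=s+\frac{t\lambda}{\lambda-t}$ with $\lambda=\rho^2$. Clearing denominators yields
$$\lambda^2-(s+2t)\lambda+st=0.$$
Hence $\det(xI-B_\Pi)=x^4-(s+2t)x^2+st$, which can be double-checked by direct expansion. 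The largest root is
$$\rho^2=\tfrac12\Bigl(s+2t+\sqrt{(s+2t)^2-4st}\Bigr)=\tfrac12\Bigl(s+2t+\sqrt{s^2+4t^2}\Bigr).$$

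The last step is to substitute back. We have $s+2t=2n-k-2$ and
$$s^2+4t^2=(k-2)^2+4(n-k)^2=5k^2-8kn+4n^2-4k+4.$$
Therefore $\rho^2=\frac{4n-2k-4+2\sqrt{5k^2-8kn+4n^2-4k+4}}{4}$, and taking square roots gives the claimed formula.

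The only point needing care is to confirm that the root we kept is the largest eigenvalue of $B_\Pi$. This holds because $B_\Pi$ is nonnegative and irreducible, so its Perron root is the largest root of $x^4-(s+2t)x^2+st$. That root is the positive square root of the larger $\lambda$, which is real and positive since the discriminant is $s^2+4t^2>0$. The hypothesis $n\ge 4k$ only guarantees $t>0$ and is otherwise not needed.
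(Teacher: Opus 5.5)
Your proposal is correct and follows the paper's proof. You use the same four-part equitable partition (your $B_\Pi$ is the paper's quotient matrix with the parts reordered), obtain the same characteristic polynomial $x^4-(2n-k-2)x^2+(k-2)(n-k)$, and use Lemma \ref{lem::2.1} to identify $\rho$ with its largest root; your elimination with $s=k-2$, $t=n-k$ simply makes explicit the ``simple calculation'' the paper leaves to the reader.
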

\begin{proof}
The vertex set of $K^{k-2}_{2, n-k}$ has an equitable partition $\pi$ and its corresponding quotient matrix is
$$B_\pi=\left(
           \begin{array}{cccc}
             0 & 0 & k-2 & n-k\\
             0 & 0 & 0  & n-k\\
             1 & 0 & 0  & 0\\
             1 & 1 & 0  & 0\\
           \end{array}
         \right).$$
Then $f(x)=\det(xI_4-B_\pi)=x^4+(k-2n+2)x^2-k^2+kn+2k-2n.$ By Lemma \ref{lem::2.1}, $\rho(K^{k-2}_{2, n-k})$ is the largest root of $f(x)=0$. After a simple calculation, we can get $\rho(K^{k-2}_{2, n-k})=\frac{\sqrt{4n-2k-4+2\sqrt{5k^2-8kn+4n^2-4k+4}}}{2}$.
\end{proof}

\begin{lem} \label{lem::3.2}
If $n\geq 4k$, then $\rho(K_i(\frac{n-k+i}{2},0,0))$ is the largest root of $x^3-x^2+(1-n)x+k-(i+1)=0$ and $\rho(K_i(\frac{n-k+i}{2},0,0))<\frac{\sqrt{4n-2k-4+2\sqrt{5k^2-8kn+4n^2-4k+4}}}{2}$ where $i=1$ or $2$. 
\end{lem}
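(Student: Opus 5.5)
The plan is to identify the graph $K_i(\frac{n-k+i}{2},0,0)$ concretely, compute its characteristic quotient polynomial via Lemma~\ref{lem::2.1}, and then compare with Lemma~\ref{lem::3.1} through a simple explicit threshold rather than by manipulating the nested radical directly.

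\textbf{Structure.} Since $t_2=t_3=0$, the graph $K_i(t,0,0)$ with $t=\frac{n-k+i}{2}$ is, as I read the construction, a friendship graph with $t$ triangles whose center $c$ is identified with the center of a star having $s=k-1-i$ leaves. I am assuming here that the $K_{2,0}$ and $K_{2,1}$ pieces contribute no vertices; this is consistent with the vertex count $1+2t+s=n$, which is exactly $t=\frac{n-k+i}{2}$. Hence $2t+s=n-1$.

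\textbf{Quotient matrix.} The partition $\{c\}$, the $2t$ triangle vertices, and the $s$ pendant vertices is equitable, with quotient matrix
$$B=\left(\begin{array}{ccc} 0 & 2t & s\\ 1 & 1 & 0\\ 1 & 0 & 0\end{array}\right).$$
Expanding $\det(xI_3-B)$ along the first row gives
$$x^3-x^2-(2t+s)x+s=x^3-x^2+(1-n)x+k-(i+1).$$
By Lemma~\ref{lem::2.1}, $\rho(K_i(\frac{n-k+i}{2},0,0))$ is the largest root of
$$g(x)=x^3-x^2+(1-n)x+k-i-1.$$
This proves the first claim.

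\textbf{Comparison.} Let $\rho^*$ be the value in Lemma~\ref{lem::3.1}. Instead of comparing roots directly, I will place a threshold $x_0=\sqrt{2n-2k}$ between them.

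First, $\rho^*\ge x_0$. Indeed,
$$4n^2-8kn+5k^2-4k+4=(2n-2k)^2+(k-2)^2\ge(2n-2k)^2,$$
so
$$(\rho^*)^2\ge \frac{2n-k-2+2n-2k}{2}=2n-\tfrac{3}{2}k-1\ge 2n-2k$$
for $k\ge 2$.

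Second, $g(x)>0$ for all $x\ge x_0$, so every root of $g$ lies below $x_0$. For $x\ge x_0\ge 1$ we have $k-i-1\ge 0$ (as $k\ge3$, $i\le2$), so
$$g(x)\ge x\,(x^2-x-n+1).$$
The function $x^2-x$ is increasing for $x\ge1$. At $x_0$ we get
$$x_0^2-x_0-n+1=n-2k+1-\sqrt{2n-2k}.$$
Using $n\ge 4k$ gives $n-2k\ge n/2$ and $\sqrt{2n-2k}<\sqrt{2n}$, so this quantity exceeds $n/2+1-\sqrt{2n}$. That is positive once $n>8$, which holds since $n\ge 4k\ge 12$. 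Thus $g>0$ on $[x_0,\infty)$, so
$$\rho(K_i(\tfrac{n-k+i}{2},0,0))<x_0\le\rho^*.$$

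\textbf{Main obstacle.} The only delicate point is the structural identification of $K_i(t,0,0)$, namely the degenerate $K_{2,0}$ and $K_{2,1}$ pieces. If those pieces were meant to contribute extra vertices, the partition would acquire another class. The same threshold argument should still go through, since any such change only perturbs the constant and linear terms of $g$ by $O(k)$, which the margin of order $n/2$ absorbs when $n\ge 4k$.
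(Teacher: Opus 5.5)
Your proof is correct. The first half matches the paper exactly: the same three-class equitable partition (center, the $2t$ triangle vertices, the $k-1-i$ pendant vertices), the same quotient matrix up to reordering, and the same cubic via Lemma~\ref{lem::2.1}.

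The comparison step takes a different route from the paper.

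\textbf{Paper's route.} The paper only asserts ``one can verify'' that $f>0$ at the nested-radical value $\rho^*$ from Lemma~\ref{lem::3.1}. A one-point sign check also needs a remark that $\rho^*$ lies beyond the largest root of the cubic. That is true here, since the two smaller roots lie below $1$, but the paper does not say so.

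\textbf{Your route.} You insert the explicit threshold $x_0=\sqrt{2n-2k}$.
\begin{itemize}
\item The identity $5k^2-8kn+4n^2-4k+4=(2n-2k)^2+(k-2)^2$ gives $\rho^*\ge x_0$ with no radical arithmetic.
\item The estimate $g(x)\ge x(x^2-x-n+1)>0$ on $[x_0,\infty)$ excludes every root of $g$ from that interval.
\end{itemize}
So the argument is complete and transparent. As a by-product it yields the stronger bound $\rho\big(K_i(\frac{n-k+i}{2},0,0)\big)<\sqrt{2n-2k}$.

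Two minor remarks.
\begin{itemize}
\item $\frac n2+1-\sqrt{2n}=\frac12(\sqrt n-\sqrt2)^2\ge 0$, so you never need a condition like $n>8$.
\item Your closing hedge about the degenerate $K_{2,0}$ and $K_{2,1}$ pieces is unnecessary. The order count $1+2t+(k-1-i)=n$ already forces them to contribute no vertices. The paper's quotient matrix encodes the same reading, since the center's row sum is $n-1$.
\end{itemize}
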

\begin{proof}
The vertex set of $K_i(\frac{n-k+i}{2},0,0)$ has equitable partition $\pi$ and its corresponding quotient matrix is
$$B_\pi=\left(
           \begin{array}{ccc}
             0 & k-(i+1) & n-k+i \\
             1 & 0  & 0\\
             1 & 0 & 1 \\
           \end{array}
         \right).$$
Then $f(x)=\det(xI_3-B_\pi)=x^3-x^2+(1-n)x+k-(i+1).$ By Lemma \ref{lem::2.1}, $\rho(K_i(\frac{n-k+i}{2},0,0))$ is the largest root of $f(x)=0$. One can verify that 
$f(\frac{\sqrt{4n-2k-4+2\sqrt{5k^2-8kn+4n^2-4k+4}}}{2})>0$, and so $\rho(K(\frac{n-k+i}{2},0,0))<\frac{\sqrt{4n-2k-4+2\sqrt{5k^2-8kn+4n^2-4k+4}}}{2}$. 
\end{proof}

In order to provide the proof of Theorem \ref{thm::1}, we begin by proving the following two valuable propositions.

\begin{figure}[http]
\centering
\begin{tikzpicture}[x=1.00mm, y=1.00mm, inner xsep=0pt, inner ysep=0pt, outer xsep=0pt, outer ysep=0pt]
\path[line width=0mm] (59.21,36.55) rectangle +(108.79,40.65);
\definecolor{L}{rgb}{0,0,0}
\definecolor{F}{rgb}{0,0,0}
\path[line width=0.30mm, draw=L, fill=F] (78.34,59.05) circle (0.50mm);
\path[line width=0.30mm, draw=L, fill=F] (64.70,72.70) circle (0.50mm);
\path[line width=0.30mm, draw=L, fill=F] (71.12,72.70) circle (0.50mm);
\path[line width=0.30mm, draw=L, fill=F] (91.99,72.70) circle (0.50mm);
\path[line width=0.30mm, draw=L, fill=F] (85.57,72.70) circle (0.50mm);
\path[line width=0.21mm, draw=L] (65.10,72.78) -- (70.64,72.78);
\path[line width=0.21mm, draw=L] (85.97,72.78) -- (91.51,72.78);
\path[line width=0.21mm, draw=L] (64.70,72.78) -- (78.35,58.82);
\path[line width=0.21mm, draw=L] (71.36,72.78) -- (78.11,58.98);
\path[line width=0.21mm, draw=L] (85.33,72.86) -- (78.59,59.14);
\path[line width=0.21mm, draw=L] (91.67,72.30) -- (78.51,59.14);
\path[line width=0.30mm, draw=L, fill=F] (78.43,72.78) circle (0.15mm);
\path[line width=0.30mm, draw=L, fill=F] (92.07,67.73) circle (0.50mm);
\path[line width=0.30mm, draw=L, fill=F] (92.07,63.47) circle (0.50mm);
\path[line width=0.30mm, draw=L, fill=F] (92.07,56.57) circle (0.50mm);
\path[line width=0.30mm, draw=L, fill=F] (102.02,58.58) circle (0.50mm);
\path[line width=0.21mm, draw=L] (78.83,59.22) -- (91.83,67.40);
\path[line width=0.21mm, draw=L] (78.51,59.06) -- (91.99,63.15);
\path[line width=0.21mm, draw=L] (92.39,67.57) -- (101.86,58.74);
\path[line width=0.21mm, draw=L] (92.47,63.31) -- (101.86,58.66);
\path[line width=0.21mm, draw=L] (78.59,58.98) -- (91.67,56.57);
\path[line width=0.21mm, draw=L] (92.31,56.57) -- (101.86,58.58);
\path[line width=0.30mm, draw=L, fill=F] (68.07,52.15) circle (0.60mm);
\path[line width=0.30mm, draw=L, fill=F] (88.62,52.15) circle (0.50mm);
\path[line width=0.21mm, draw=L] (78.43,58.90) -- (67.99,51.99);
\path[line width=0.21mm, draw=L] (78.27,58.98) -- (88.62,52.31);
\path[line width=0.30mm, draw=L, fill=F] (68.15,41.08) circle (0.50mm);
\path[line width=0.30mm, draw=L, fill=F] (75.22,41.08) circle (0.50mm);
\path[line width=0.30mm, draw=L, fill=F] (88.62,41.16) circle (0.50mm);
\path[line width=0.21mm, draw=L] (68.07,52.07) -- (68.07,40.68);
\path[line width=0.30mm, draw=L] (67.99,41.00);
\path[line width=0.21mm, draw=L] (68.07,41.08) -- (88.62,52.15);
\path[line width=0.21mm, draw=L] (68.07,52.07) -- (75.30,41.08);
\path[line width=0.21mm, draw=L] (67.99,52.23) -- (88.78,41.08);
\path[line width=0.21mm, draw=L] (75.30,41.08) -- (88.70,51.99);
\path[line width=0.21mm, draw=L] (88.62,52.07) -- (88.62,40.84);
\path[line width=0.30mm, draw=L, fill=F] (64.78,67.73) circle (0.50mm);
\path[line width=0.30mm, draw=L, fill=F] (64.78,63.47) circle (0.50mm);
\path[line width=0.30mm, draw=L, fill=F] (64.78,56.57) circle (0.50mm);
\path[line width=0.21mm, draw=L] (64.78,67.65) -- (78.35,58.90);
\path[line width=0.21mm, draw=L] (64.86,63.39) -- (78.43,58.90);
\path[line width=0.21mm, draw=L] (64.86,56.57) -- (78.43,58.98);
\path[line width=0.30mm, draw=L, fill=F] (80.03,72.78) circle (0.15mm);
\path[line width=0.30mm, draw=L, fill=F] (76.82,72.78) circle (0.15mm);
\path[line width=0.30mm, draw=L, fill=F] (81.48,41.08) circle (0.20mm);
\path[line width=0.30mm, draw=L, fill=F] (83.08,41.08) circle (0.20mm);
\path[line width=0.30mm, draw=L, fill=F] (79.87,41.08) circle (0.20mm);
\path[line width=0.30mm, draw=L, fill=F] (92.07,59.78) circle (0.15mm);
\path[line width=0.30mm, draw=L, fill=F] (92.07,61.38) circle (0.15mm);
\path[line width=0.30mm, draw=L, fill=F] (92.07,58.17) circle (0.15mm);
\path[line width=0.30mm, draw=L, fill=F] (64.86,60.02) circle (0.15mm);
\path[line width=0.30mm, draw=L, fill=F] (64.86,58.41) circle (0.15mm);
\path[line width=0.30mm, draw=L, fill=F] (64.86,61.63) circle (0.15mm);
\path[line width=0.30mm, draw=L, fill=F] (138.31,59.05) circle (0.50mm);
\path[line width=0.30mm, draw=L, fill=F] (124.67,72.70) circle (0.50mm);
\path[line width=0.30mm, draw=L, fill=F] (131.09,72.70) circle (0.50mm);
\path[line width=0.30mm, draw=L, fill=F] (151.96,72.70) circle (0.50mm);
\path[line width=0.30mm, draw=L, fill=F] (145.54,72.70) circle (0.50mm);
\path[line width=0.21mm, draw=L] (125.07,72.78) -- (130.61,72.78);
\path[line width=0.21mm, draw=L] (145.94,72.78) -- (151.48,72.78);
\path[line width=0.21mm, draw=L] (124.67,72.78) -- (138.32,58.82);
\path[line width=0.21mm, draw=L] (131.33,72.78) -- (138.07,58.98);
\path[line width=0.21mm, draw=L] (145.30,72.86) -- (138.56,59.14);
\path[line width=0.21mm, draw=L] (151.64,72.30) -- (138.48,59.14);
\path[line width=0.30mm, draw=L, fill=F] (138.40,72.78) circle (0.15mm);
\path[line width=0.30mm, draw=L, fill=F] (152.04,67.73) circle (0.50mm);
\path[line width=0.30mm, draw=L, fill=F] (152.04,63.47) circle (0.50mm);
\path[line width=0.30mm, draw=L, fill=F] (152.04,56.57) circle (0.50mm);
\path[line width=0.30mm, draw=L, fill=F] (161.99,58.58) circle (0.50mm);
\path[line width=0.21mm, draw=L] (138.80,59.22) -- (151.80,67.40);
\path[line width=0.21mm, draw=L] (138.48,59.06) -- (151.96,63.15);
\path[line width=0.21mm, draw=L] (152.36,67.57) -- (161.83,58.74);
\path[line width=0.21mm, draw=L] (152.44,63.31) -- (161.83,58.66);
\path[line width=0.21mm, draw=L] (138.48,59.14) -- (151.64,56.57);
\path[line width=0.21mm, draw=L] (152.28,56.57) -- (161.83,58.58);
\path[line width=0.30mm, draw=L, fill=F] (152.16,46.63) circle (0.50mm);
\path[line width=0.30mm, draw=L, fill=F] (124.75,67.73) circle (0.50mm);
\path[line width=0.30mm, draw=L, fill=F] (124.75,63.47) circle (0.50mm);
\path[line width=0.30mm, draw=L, fill=F] (124.75,56.57) circle (0.50mm);
\path[line width=0.21mm, draw=L] (124.75,67.65) -- (138.32,58.90);
\path[line width=0.21mm, draw=L] (124.83,63.39) -- (138.40,58.90);
\path[line width=0.21mm, draw=L] (124.83,56.57) -- (138.40,58.98);
\path[line width=0.30mm, draw=L, fill=F] (140.00,72.78) circle (0.15mm);
\path[line width=0.30mm, draw=L, fill=F] (136.79,72.78) circle (0.15mm);
\path[line width=0.30mm, draw=L, fill=F] (151.95,52.08) circle (0.15mm);
\path[line width=0.30mm, draw=L, fill=F] (152.04,59.78) circle (0.15mm);
\path[line width=0.30mm, draw=L, fill=F] (152.04,61.38) circle (0.15mm);
\path[line width=0.30mm, draw=L, fill=F] (152.04,58.17) circle (0.15mm);
\path[line width=0.30mm, draw=L, fill=F] (124.83,60.02) circle (0.15mm);
\path[line width=0.30mm, draw=L, fill=F] (124.83,58.41) circle (0.15mm);
\path[line width=0.30mm, draw=L, fill=F] (124.83,61.63) circle (0.15mm);
\path[line width=0.30mm, draw=L, fill=F] (151.95,50.27) circle (0.15mm);
\path[line width=0.30mm, draw=L, fill=F] (151.95,48.66) circle (0.15mm);
\path[line width=0.21mm, draw=L] (138.16,59.13) -- (152.16,46.53);
\path[line width=0.21mm, draw=L] (152.05,46.63) -- (161.88,58.39);
\path[line width=0.30mm, draw=L, fill=F] (152.05,54.43) circle (0.50mm);
\path[line width=0.21mm, draw=L] (138.37,59.03) -- (151.95,54.43);
\path[line width=0.21mm, draw=L] (152.16,54.43) -- (161.83,58.58);
\draw(61.21,67.57) node[anchor=base west]{\fontsize{5.17}{6.21}\selectfont $z_1$};
\draw(61.21,63.19) node[anchor=base west]{\fontsize{5.17}{6.21}\selectfont $z_2$};
\draw(61.21,54.65) node[anchor=base west]{\fontsize{5.17}{6.21}\selectfont $z_{k\!-\!2}$};
\draw(63.88,73.77) node[anchor=base west]{\fontsize{5.17}{6.21}\selectfont $u_1$};
\draw(70.29,73.77) node[anchor=base west]{\fontsize{5.17}{6.21}\selectfont $u_2$};
\draw(83.96,73.77) node[anchor=base west]{\fontsize{5.17}{6.21}\selectfont $u_{2t_1\!-\!1}$};
\draw(92.30,73.56) node[anchor=base west]{\fontsize{5.17}{6.21}\selectfont $u_{2t_1}$};
\draw(92.83,68.00) node[anchor=base west]{\fontsize{5.17}{6.21}\selectfont $v_1$};
\draw(92.62,63.41) node[anchor=base west]{\fontsize{5.17}{6.21}\selectfont $v_2$};
\draw(92.94,55.29) node[anchor=base west]{\fontsize{5.17}{6.21}\selectfont $v_{t_2}$};
\draw(103.30,58.39) node[anchor=base west]{\fontsize{5.17}{6.21}\selectfont $v$};
\draw(66.76,38.94) node[anchor=base west]{\fontsize{5.17}{6.21}\selectfont $w_1$};
\draw(74.46,38.94) node[anchor=base west]{\fontsize{5.17}{6.21}\selectfont $w_2$};
\draw(87.92,38.94) node[anchor=base west]{\fontsize{5.17}{6.21}\selectfont $w_{t_3}$};
\draw(64.73,51.76) node[anchor=base west]{\fontsize{5.17}{6.21}\selectfont $w_1^{\prime}$};
\draw(89.63,51.66) node[anchor=base west]{\fontsize{5.17}{6.21}\selectfont $w_2^{\prime}$};
\draw(121.18,67.57) node[anchor=base west]{\fontsize{5.17}{6.21}\selectfont $z_1$};
\draw(121.18,63.19) node[anchor=base west]{\fontsize{5.17}{6.21}\selectfont $z_2$};
\draw(121.18,54.65) node[anchor=base west]{\fontsize{5.17}{6.21}\selectfont $z_{k\!-\!2}$};
\draw(123.85,73.77) node[anchor=base west]{\fontsize{5.17}{6.21}\selectfont $u_1$};
\draw(130.26,73.77) node[anchor=base west]{\fontsize{5.17}{6.21}\selectfont $u_2$};
\draw(143.93,73.77) node[anchor=base west]{\fontsize{5.17}{6.21}\selectfont $u_{2t_1\!-\!1}$};
\draw(92.30,73.56) node[anchor=base west]{\fontsize{5.17}{6.21}\selectfont $u_{2t_1}$};
\draw(92.30,73.56) node[anchor=base west]{\fontsize{5.17}{6.21}\selectfont $u_{2t_1}$};
\draw(152.37,73.56) node[anchor=base west]{\fontsize{5.17}{6.21}\selectfont $u_{2t_1}$};
\draw(152.80,68.00) node[anchor=base west]{\fontsize{5.17}{6.21}\selectfont $v_1$};
\draw(152.59,63.41) node[anchor=base west]{\fontsize{5.17}{6.21}\selectfont $v_2$};
\draw(163.27,58.39) node[anchor=base west]{\fontsize{5.17}{6.21}\selectfont $v$};
\draw(77.13,56.68) node[anchor=base west]{\fontsize{5.17}{6.21}\selectfont $u^*$};
\draw(136.35,56.78) node[anchor=base west]{\fontsize{5.17}{6.21}\selectfont $u^*$};
\draw(152.69,57.74) node[anchor=base west]{\fontsize{5.17}{6.21}\selectfont $v_{t_2}$};
\draw(152.59,52.94) node[anchor=base west]{\fontsize{5.17}{6.21}\selectfont $w_1^{\prime}$};
\draw(152.59,44.82) node[anchor=base west]{\fontsize{5.17}{6.21}\selectfont $w_{t_3}$};
\end{tikzpicture}%
\begin{center}\small{Fig. 1 : The vertex labels of $K_1(t_1, t_2, t_3)$ and $K_1(t_1, t_2+t_3+2, 0).$
}\end{center}
\end{figure}

\begin{pro}\label{pro::3.1}
$\rho(K_1(t_1, t_2, t_3))<\rho(K_1(t_1, t_2+t_3+2, 0))$, where $2t_1+t_2+t_3+k+2=n$.
\end{pro}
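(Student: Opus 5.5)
The approach is to compare the two graphs directly through the Rayleigh quotient, using the Perron vector of $G:=K_1(t_1,t_2,t_3)$. Write $H:=K_1(t_1,t_2+t_3+2,0)$. Label the vertices as in Fig.~1: $u^*$ is the common centre, $v$ is the far vertex of $K_{2,t_2}$, and $w_1',w_2'$ are the two vertices of degree $t_3+1$ in $K_{2,t_3+1}$, both adjacent to $u^*$ and to $w_1,\dots,w_{t_3}$. Then $H$ arises from $G$ (on the same vertex set) by
\begin{itemize}
\item deleting the $2t_3$ edges $w_iw_1'$, $w_iw_2'$;
\item adding the $2t_3+2$ edges $w_iu^*$, $w_iv$ ($1\le i\le t_3$), $w_1'v$ and $w_2'v$.
\end{itemize}
After this, $w_1',w_2',w_1,\dots,w_{t_3}$ are exactly $t_3+2$ new common neighbours of $u^*$ and $v$, i.e.\ new copies of the $v_j$'s.

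Let $x$ be the unit Perron vector of $G$. By symmetry $x_{w_1'}=x_{w_2'}=:x_{w'}$ and all $x_{w_i}=:x_w$. The key quantity is
$$x^{T}(A(H)-A(G))x=2\Big[t_3x_w\big(x_{u^*}+x_v-2x_{w'}\big)+2x_vx_{w'}\Big].$$
If this is nonnegative, then $\rho(H)\ge x^TA(H)x\ge \rho(G)$. Equality throughout would force $x$ to be a Perron vector of $H$. This is impossible: the eigen-equation of $H$ at $w_1$ would read $\rho x_w=x_{u^*}+x_v$, while in $G$ we have $\rho x_w=2x_{w'}$, and comparing these with the equations at $w'$ and $u^*$ gives a contradiction (alternatively, the equation at $v$ changes since $v$ gains neighbours with positive entries). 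So strictness follows, and it suffices to prove $x_{u^*}+x_v\ge 2x_{w'}$. The term $2x_vx_{w'}>0$ even gives slack, and when $t_3=0$ the claim is immediate.

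To get the inequality, I would use the eigen-equations at $w_i$ and $w'$:
$$\rho x_w=2x_{w'},\qquad \rho x_{w'}=x_{u^*}+t_3x_w,$$
so that
$$x_{w'}=\frac{\rho\,x_{u^*}}{\rho^2-2t_3}.$$
Hence $x_{w'}\le x_{u^*}$ as soon as $\rho^2\ge 2t_3+\rho$. If $\rho^2\ge 2t_3+\rho$, then $2x_{w'}\le 2x_{u^*}$, which is not quite $x_{u^*}+x_v$. So I would instead aim for the sharper statement $2x_{w'}\le x_{u^*}+x_v$ directly, in the form
$$\frac{2\rho}{\rho^2-2t_3}\le 1+\frac{x_v}{x_{u^*}}.$$
Two ingredients feed into this:
\begin{itemize}
\item a lower bound $\rho^2\ge 2t_3+2\rho$, which would give $2x_{w'}\le x_{u^*}$ and finish the argument outright;
\item failing that, a lower bound on $x_v/x_{u^*}$ coming from $\rho x_v=\sum_j x_{v_j}$ and $\rho x_{v_j}=x_{u^*}+x_v$, namely $x_v=\dfrac{t_2x_{u^*}}{\rho^2-t_2}$.
\end{itemize}

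The main obstacle is the lower bound on $\rho^2$. I would first try $\rho^2\ge$ the number of walks of length $2$ from $u^*$ normalised, i.e.\ compare with the subgraph consisting of the star at $u^*$ together with $K_{2,t_3+1}$. Using Lemma~\ref{lem::2.2}, $\rho(G)$ exceeds the spectral radius of the subgraph formed by $K_{2,t_3+1}$ plus the $k-2\ge1$ pendant edges, the $2t_1$ friendship edges and the $t_2$ edges at $u^*$. A quotient-matrix computation via Lemma~\ref{lem::2.1} then yields roughly
$$\rho^2\ge 2t_3+d(u^*)-2+\cdots.$$
Since $d(u^*)=2t_1+t_2+k$, this should dominate $2\rho\approx 2\sqrt{n}$ in the relevant range. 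In the remaining small cases (when $2t_1+t_2$ is small compared with $\sqrt{n}$), $t_3$ is large, so $\rho^2\approx 2t_3+d(u^*)$ and I would fall back on the $x_v$ term and the extra $2x_vx_{w'}$ slack.
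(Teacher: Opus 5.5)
\textbf{The gap: the sign you need is false.} Your switching, your expression for $x^T(A(H)-A(G))x$, and your formula $x_{w'}=\rho x_{u^*}/(\rho^2-2t_3)$ are all correct. The gap is the step you call the main obstacle, and it cannot be filled. For a whole range of parameters the required sign fails, so testing $A(H)$ against the Perron vector $x$ of $G$ cannot prove the proposition.

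Your heuristic $\rho^2\ge 2t_3+d(u^*)-2+\cdots$ is wrong: squared spectral radii of pieces glued at a vertex do not add. Write $\varepsilon=\rho(G)^2-2t_3$ and $C=k-2+\frac{2t_1\rho}{\rho-1}+\frac{t_2\rho^2}{\rho^2-t_2}$. The eigen-equation at $u^*$ then becomes $\varepsilon^2+(2t_3-C-2)\varepsilon-4t_3=0$. For fixed $k,t_1,t_2$, $C$ stays bounded, so $t_3\to\infty$ forces $\varepsilon\to 2$. Consequently:
\begin{itemize}
\item $x_{w'}/x_{u^*}=\rho/\varepsilon\approx\sqrt{t_3/2}$;
\item $x_w=2x_{w'}/\rho=2x_{u^*}/\varepsilon\approx x_{u^*}$;
\item $x_v=t_2x_{u^*}/(\rho^2-t_2)=O(x_{u^*}/t_3)$.
\end{itemize}
Hence $x^T(A(H)-A(G))x\approx-4t_3x_wx_{w'}<0$, and neither the $x_v$ term nor the slack $2x_vx_{w'}$ can compensate.

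Concretely, take $k=3$, $t_1=0$, $t_2=2$, $t_3=100$. Then $\rho(G)^2\approx 202.03$, $x_{w'}\approx 7x_{u^*}$, and $x^T(A(H)-A(G))x\approx-2.6\cdot 10^3\,x_{u^*}^2$, even though $\rho(H)^2\approx 208.5$. The statement holds there, but your method gives nothing.

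The paper's proof follows the same route. It only appears to close because it writes $x_{u^*}=(\rho'^2-2t_3)x_{w_1'}$, dropping a factor $1/\rho'$. With the correct relation (yours), its inequality $x_{u^*}+x_v>2x_{w_1'}$ fails in the same regime.

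\textbf{A route that works.} Compare the secular equations coming from the quotient matrices (Lemma~\ref{lem::2.1}) instead of using a test vector. With $T=t_2+t_3+2$, set
\[
\phi_G(\rho)=\rho^2-(k-2)-\frac{2t_1\rho}{\rho-1}-\frac{t_2\rho^2}{\rho^2-t_2}-\frac{2\rho^2}{\rho^2-2t_3},\qquad
\phi_H(\rho)=\rho^2-(k-2)-\frac{2t_1\rho}{\rho-1}-\frac{T\rho^2}{\rho^2-T}.
\]
The eigen-equations give $\phi_G(\rho(G))=0=\phi_H(\rho(H))$. Moreover $\phi_H$ is strictly increasing on $(\sqrt T,\infty)$ and tends to $-\infty$ at $\sqrt T^{+}$, so $\rho(H)$ is its only zero there.

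Put $s=\rho(G)^2$; Lemma~\ref{lem::2.2} gives $s>2t_3+2$. If $s\le T$ you are done, since $\rho(H)^2\ge\rho(K_{2,T})^2=2T>T$. Otherwise
\[
\frac{\phi_H(\rho(G))}{s}=\frac{\phi_H(\rho(G))-\phi_G(\rho(G))}{s}=\frac{t_2}{s-t_2}+\frac{2}{s-2t_3}-\frac{T}{s-T}<0 .
\]
This holds because $\frac{t_2}{s-t_2}\le\frac{t_2}{s-T}$ and $\frac{2}{s-2t_3}<\frac{t_3+2}{s-T}$. The latter is equivalent to $t_3(s-2t_3-2)+2t_2+4>0$, which is true since $s>2t_3+2$. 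Hence $\rho(G)<\rho(H)$.
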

\begin{proof}
Suppose that $E_1=\{vw_i'\mid 1\leq i\leq 2\}+\{u^*w_i\mid 1\leq i\leq t_3\}+\{vw_i\mid 1\leq i\leq t_3\}$ and $E_2=\{w_i'w_j\mid 1\leq i\leq 2, 1\leq j\leq t_3\}$. Let $G_1=K_1(t_1, t_2, t_3)$ and let $G_1'=G_1-E_2+E_1$, where the labels of $V(G)$ and $V(G')$ are illustrated in Fig. 1. It is evident that $G_1'\cong K_1(t_1, t_2+t_3+2, 0)$. Let 
$x$ be the Perron vector of $A(G_1)$ and  $\rho'=\rho(G_1)$. By symmetry, we have $x_{z_i}=x_{z_1}$ for $2\leq i\leq k-2$, $x_{u_i}=x_{u_1}$ for $2\leq i\leq 2t_1$, $x_{v_i}=x_{v_1}$ for $2\leq i\leq t_2$, $x_{w_1'}=x_{w_2'}$ and  $x_{w_i}=x_{w_1}$ for $2\leq i\leq t_3$. Note that $K_{2,{t_3+1}}$ is a proper subgraph of $G_1$. Then $\rho'> \sqrt{2(t_3+1)}$ by Lemma \ref{lem::2.2}. And then
$$
\left\{\begin{array} { l }
{ \rho'x_{w_{1}}=2x_{w_1'}, } \\
{ \rho'x_{w_1'}=x_{u^*}+t_3x_{w_1}, }
\end{array} \quad \left\{\begin{array}{l}
\rho'x_{v}=t_2x_{v_1},  \\
\rho'x_{v_1}=x_{u^*}+x_{v}.
\end{array}\right.\right.
$$
From this information, we can induce that
\begin{equation}\label{equ::1}
\begin{aligned}
x_{u^*}=({\rho'}^2-2t_3)x_{w_1'}\text{ and } x_v=\frac{t_2}{{\rho'}^2-t_2}x_{u^*}.
\end{aligned}
\end{equation}
From (\ref{equ::1}), it follows that $x_{u^*}+x_v=\frac{{\rho'}^2}{{\rho'}^2-t_2}x_{u^*}=\frac{{\rho'}^2({\rho'}^2-2t_3)}{{\rho'}^2-t_2}x_{w_1'}>2x_{w_1'}$ because $\rho'> \sqrt{2(t_3+1)}$.
Furthermore, combining this with 
$x_{u_2}>x_u$ and $2x_{v_{n-3}}>x_{v_\delta}$, we conclude that
\begin{align*}
\rho(G_1')-\rho(G_1)&\geq x^T\big(A(G_1')-A(G_1)\big)x\\
&\geq4x_{v}x_{w_1'}+2\sum_{i=1}^{t_3}x_{u^*}x_{w_i}+2\sum_{i=1}^{t_3}x_{v}x_{w_i}-4\sum_{i=1}^{t_3}x_{w_1'}x_{w_i}\\
&=4x_{v}x_{w_1'}+2t_3(x_{u^*}+x_{v})x_{w_1}-4t_3x_{w_1'}x_{w_1}\\
&>2t_3(x_{u^*}+x_{v}-2x_{w_1'})x_{w_1}\\
&>0 ~~ (\text{since}~ x_{u^*}+x_{v}-2x_{w_1'}>0).
\end{align*}
It follows that $\rho(G_1')>\rho(G_1)$.
\end{proof}

\begin{figure}[http]
\centering
\begin{tikzpicture}[x=1.00mm, y=1.00mm, inner xsep=0pt, inner ysep=0pt, outer xsep=0pt, outer ysep=0pt]
\path[line width=0mm] (61.58,49.07) rectangle +(108.63,29.00);
\definecolor{L}{rgb}{0,0,0}
\definecolor{F}{rgb}{0,0,0}
\path[line width=0.30mm, draw=L, fill=F] (80.72,59.05) circle (0.50mm);
\path[line width=0.30mm, draw=L, fill=F] (67.08,72.70) circle (0.50mm);
\path[line width=0.30mm, draw=L, fill=F] (73.50,72.70) circle (0.50mm);
\path[line width=0.30mm, draw=L, fill=F] (94.37,72.70) circle (0.50mm);
\path[line width=0.30mm, draw=L, fill=F] (87.95,72.70) circle (0.50mm);
\path[line width=0.21mm, draw=L] (67.48,72.78) -- (73.02,72.78);
\path[line width=0.21mm, draw=L] (88.35,72.78) -- (93.89,72.78);
\path[line width=0.21mm, draw=L] (67.08,72.78) -- (80.72,58.82);
\path[line width=0.21mm, draw=L] (73.74,72.78) -- (80.48,58.98);
\path[line width=0.21mm, draw=L] (87.71,72.86) -- (80.96,59.14);
\path[line width=0.21mm, draw=L] (94.05,72.30) -- (80.88,59.14);
\path[line width=0.30mm, draw=L, fill=F] (80.80,72.78) circle (0.15mm);
\path[line width=0.30mm, draw=L, fill=F] (94.45,67.73) circle (0.50mm);
\path[line width=0.30mm, draw=L, fill=F] (94.45,63.47) circle (0.50mm);
\path[line width=0.30mm, draw=L, fill=F] (94.45,56.57) circle (0.50mm);
\path[line width=0.30mm, draw=L, fill=F] (104.40,58.58) circle (0.50mm);
\path[line width=0.21mm, draw=L] (81.20,59.22) -- (94.21,67.40);
\path[line width=0.21mm, draw=L] (80.88,59.06) -- (94.37,63.15);
\path[line width=0.21mm, draw=L] (94.77,67.57) -- (104.24,58.74);
\path[line width=0.21mm, draw=L] (94.85,63.31) -- (104.24,58.66);
\path[line width=0.21mm, draw=L] (80.89,59.14) -- (94.05,56.57);
\path[line width=0.21mm, draw=L] (94.69,56.57) -- (104.24,58.58);
\path[line width=0.30mm, draw=L, fill=F] (67.16,67.73) circle (0.50mm);
\path[line width=0.30mm, draw=L, fill=F] (67.16,63.47) circle (0.50mm);
\path[line width=0.30mm, draw=L, fill=F] (67.16,56.57) circle (0.50mm);
\path[line width=0.21mm, draw=L] (67.16,67.65) -- (80.72,58.90);
\path[line width=0.21mm, draw=L] (67.24,63.39) -- (80.80,58.90);
\path[line width=0.21mm, draw=L] (67.24,56.57) -- (80.80,58.98);
\path[line width=0.30mm, draw=L, fill=F] (82.41,72.78) circle (0.15mm);
\path[line width=0.30mm, draw=L, fill=F] (79.20,72.78) circle (0.15mm);
\path[line width=0.30mm, draw=L, fill=F] (94.45,59.78) circle (0.15mm);
\path[line width=0.30mm, draw=L, fill=F] (94.45,61.38) circle (0.15mm);
\path[line width=0.30mm, draw=L, fill=F] (94.45,58.17) circle (0.15mm);
\path[line width=0.30mm, draw=L, fill=F] (67.24,60.02) circle (0.15mm);
\path[line width=0.30mm, draw=L, fill=F] (67.24,58.41) circle (0.15mm);
\path[line width=0.30mm, draw=L, fill=F] (67.24,61.63) circle (0.15mm);
\draw(63.58,67.57) node[anchor=base west]{\fontsize{5.17}{6.21}\selectfont $z_1$};
\draw(63.58,63.19) node[anchor=base west]{\fontsize{5.17}{6.21}\selectfont $z_2$};
\draw(63.58,54.65) node[anchor=base west]{\fontsize{5.17}{6.21}\selectfont $z_{k\!-\!2}$};
\draw(66.26,73.77) node[anchor=base west]{\fontsize{5.17}{6.21}\selectfont $u_1$};
\draw(72.67,73.77) node[anchor=base west]{\fontsize{5.17}{6.21}\selectfont $u_2$};
\draw(86.34,73.77) node[anchor=base west]{\fontsize{5.17}{6.21}\selectfont $u_{2t_1\!-\!1}$};
\draw(94.78,73.56) node[anchor=base west]{\fontsize{5.17}{6.21}\selectfont $u_{2t_1}$};
\draw(95.21,68.00) node[anchor=base west]{\fontsize{5.17}{6.21}\selectfont $v_1$};
\draw(94.99,63.41) node[anchor=base west]{\fontsize{5.17}{6.21}\selectfont $v_2$};
\draw(105.68,58.39) node[anchor=base west]{\fontsize{5.17}{6.21}\selectfont $v$};
\draw(78.75,56.78) node[anchor=base west]{\fontsize{5.17}{6.21}\selectfont $u^*$};
\draw(95.10,57.74) node[anchor=base west]{\fontsize{5.17}{6.21}\selectfont $v_{t_2}$};
\path[line width=0.30mm, draw=L, fill=F] (140.53,63.48) circle (0.50mm);
\path[line width=0.30mm, draw=L, fill=F] (154.26,74.37) circle (0.50mm);
\path[line width=0.30mm, draw=L, fill=F] (154.26,70.12) circle (0.50mm);
\path[line width=0.30mm, draw=L, fill=F] (154.26,63.21) circle (0.50mm);
\path[line width=0.30mm, draw=L, fill=F] (164.21,65.22) circle (0.50mm);
\path[line width=0.21mm, draw=L] (141.01,63.65) -- (154.38,74.35);
\path[line width=0.21mm, draw=L] (140.69,63.49) -- (154.27,70.14);
\path[line width=0.21mm, draw=L] (154.58,74.21) -- (164.05,65.38);
\path[line width=0.21mm, draw=L] (154.66,69.96) -- (164.05,65.30);
\path[line width=0.21mm, draw=L] (140.70,63.57) -- (154.04,63.16);
\path[line width=0.21mm, draw=L] (154.50,63.21) -- (164.05,65.22);
\path[line width=0.30mm, draw=L, fill=F] (154.37,53.28) circle (0.50mm);
\path[line width=0.30mm, draw=L, fill=F] (126.97,72.16) circle (0.50mm);
\path[line width=0.30mm, draw=L, fill=F] (126.97,67.90) circle (0.50mm);
\path[line width=0.30mm, draw=L, fill=F] (126.97,61.00) circle (0.50mm);
\path[line width=0.21mm, draw=L] (126.97,72.08) -- (140.53,63.33);
\path[line width=0.21mm, draw=L] (127.05,67.82) -- (140.61,63.33);
\path[line width=0.21mm, draw=L] (127.05,61.00) -- (140.61,63.41);
\path[line width=0.30mm, draw=L, fill=F] (154.16,58.73) circle (0.15mm);
\path[line width=0.30mm, draw=L, fill=F] (154.26,66.42) circle (0.15mm);
\path[line width=0.30mm, draw=L, fill=F] (154.59,70.02) circle (0.15mm);
\path[line width=0.30mm, draw=L, fill=F] (154.26,64.82) circle (0.15mm);
\path[line width=0.30mm, draw=L, fill=F] (127.05,64.45) circle (0.15mm);
\path[line width=0.30mm, draw=L, fill=F] (127.05,62.85) circle (0.15mm);
\path[line width=0.30mm, draw=L, fill=F] (127.05,66.06) circle (0.15mm);
\path[line width=0.30mm, draw=L, fill=F] (154.16,56.91) circle (0.15mm);
\path[line width=0.30mm, draw=L, fill=F] (154.16,55.31) circle (0.15mm);
\path[line width=0.21mm, draw=L] (140.38,63.56) -- (154.27,53.52);
\path[line width=0.21mm, draw=L] (154.27,53.28) -- (164.10,65.03);
\path[line width=0.30mm, draw=L, fill=F] (154.27,61.08) circle (0.50mm);
\path[line width=0.21mm, draw=L] (140.59,63.46) -- (154.16,61.05);
\path[line width=0.21mm, draw=L] (154.37,61.08) -- (164.05,65.22);
\draw(123.39,72.00) node[anchor=base west]{\fontsize{5.17}{6.21}\selectfont $z_1$};
\draw(123.39,67.62) node[anchor=base west]{\fontsize{5.17}{6.21}\selectfont $z_2$};
\draw(123.39,59.08) node[anchor=base west]{\fontsize{5.17}{6.21}\selectfont $z_{k\!-\!2}$};
\draw(155.02,74.65) node[anchor=base west]{\fontsize{5.17}{6.21}\selectfont $v_1$};
\draw(154.80,70.05) node[anchor=base west]{\fontsize{5.17}{6.21}\selectfont $v_2$};
\draw(165.49,65.03) node[anchor=base west]{\fontsize{5.17}{6.21}\selectfont $v$};
\draw(138.56,61.21) node[anchor=base west]{\fontsize{5.17}{6.21}\selectfont $u^*$};
\draw(154.91,64.39) node[anchor=base west]{\fontsize{5.17}{6.21}\selectfont $v_{t_2}$};
\draw(154.80,59.58) node[anchor=base west]{\fontsize{5.17}{6.21}\selectfont $u_1$};
\draw(154.80,51.46) node[anchor=base west]{\fontsize{5.17}{6.21}\selectfont $u_{2t_1}$};
\end{tikzpicture}%
\begin{center}\small{Fig. 2 : The vertex labels of $K(t_1, t_2)$ and $K(0, 2t_1+t_2)$.
}\end{center}
\end{figure}

\begin{pro}\label{pro::3.2}
$\rho(K(t_1, t_2))<\rho(K(0, 2t_1+t_2))$, where $2t_1+t_2+k=n$ and $t_2\neq 0$.
\end{pro}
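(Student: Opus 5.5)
We argue as in Proposition \ref{pro::3.1}, with a single edge-switching step compared through the Rayleigh quotient. Let $G_2=K(t_1,t_2)$, labelled as in Fig. 2. Its vertices are: the center $u^*$ (the identified vertex), the pendant vertices $z_1,\dots,z_{k-2}$, the friendship vertices $u_1,\dots,u_{2t_1}$ with edges $u_{2i-1}u_{2i}$, the vertices $v_1,\dots,v_{t_2}$ of the $K_{2,t_2}$, and the other vertex $v$ of that $K_{2,t_2}$.

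Form $G_2'$ by deleting the $t_1$ edges $E_2=\{u_{2i-1}u_{2i}\mid 1\le i\le t_1\}$ and adding the $2t_1$ edges $E_1=\{vu_j\mid 1\le j\le 2t_1\}$. Then each $u_j$ is adjacent to exactly $u^*$ and $v$, so $G_2'\cong K(0,2t_1+t_2)$ as in the right side of Fig. 2.

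Let $x$ be the Perron vector of $A(G_2)$ and $\rho'=\rho(G_2)$. By symmetry $x_{u_j}=x_{u_1}$ for all $j$, $x_{v_i}=x_{v_1}$ for all $i$, and $x_{z_i}=x_{z_1}$. The Rayleigh quotient gives
$$\rho(G_2')-\rho(G_2)\ \ge\ x^T\big(A(G_2')-A(G_2)\big)x=4t_1x_vx_{u_1}-2t_1x_{u_1}^2=2t_1x_{u_1}(2x_v-x_{u_1}).$$
So it suffices to show $2x_v>x_{u_1}$; strictness then follows since $t_1\ge1$ (if $t_1=0$ there is nothing to prove). The eigen-equations give
$$\rho'x_{u_1}=x_{u^*}+x_{u_1},\qquad \rho'x_{v_1}=x_{u^*}+x_v,\qquad \rho'x_v=t_2x_{v_1}.$$
Hence $x_{u_1}=\frac{x_{u^*}}{\rho'-1}$. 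Moreover $x_v(\rho'^2-t_2)=t_2x_{u^*}$, so $x_v=\frac{t_2}{\rho'^2-t_2}x_{u^*}$. The required inequality is therefore
$$\frac{2t_2}{\rho'^2-t_2}>\frac{1}{\rho'-1},\quad\text{equivalently}\quad \rho'^2-2t_2\rho'+t_2<0 .$$

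This inequality is the main obstacle, because it holds only if $\rho'$ is not too large compared with $t_2$. When $t_2$ is small (for example $t_2=1$, where $K_{2,1}$ is just a path) it can fail, and then $x_v$ may genuinely be smaller than $x_{u_1}/2$.

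If the direct switch does not work, I would first try to bound $\rho'$ above by Lemma \ref{lem::2.3}. View $G_2$ as $u^*$-rooted pieces: the star-like part formed by the $z_i$, the friendship graph $F_{t_1}$, and $K_{2,t_2}$. This gives $\rho'^2\le k-2+\rho(F_{t_1})^2+2t_2$, together with $\rho(F_{t_1})=\frac{1+\sqrt{1+8t_1}}{2}$. Under the hypothesis $t_2\neq0$ and $n\ge 4k$, that bound may still not yield $\rho'<t_2+\sqrt{t_2^2-t_2}$ when $t_1$ dominates.

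Therefore the more robust plan is a two-step switch. First move only one triangle: delete $u_1u_2$ and add $vu_1,vu_2$, comparing via $x$ as above but now using the double-eigenvector technique mentioned in the abstract. Concretely, take Perron vectors $x$ of $G_2$ and $y$ of $G_2'$, and use
$$(\rho(G_2')-\rho(G_2))\,x^Ty=x^T\big(A(G_2')-A(G_2)\big)y=\big(x_{u_1}(2y_v-y_{u_1})+y_{u_1}x_v\cdot2\big)\text{-type terms}.$$
Then use $y$'s eigen-equations in $G_2'$, where $y_{u_1}=y_{v_1}$ by symmetry, to get $2x_vy_{u_1}+2x_{u_1}y_v-2x_{u_1}y_{u_1}$-type expressions. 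The positivity should follow from $y_v\ge$ (comparable to) $y_{u_1}$ in $G_2'$ and $x_v>0$.

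I expect verifying this sign, in the regime where $t_1$ is large and $t_2$ small, to be the crux. I would handle it by explicit comparison of the ratios $x_v/x_{u^*}$ and $y_{u_1}/y_{u^*}$ obtained from the quotient equations.
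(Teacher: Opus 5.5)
Your single-vector computation is correct, and you rightly see that $\rho'^2-2t_2\rho'+t_2<0$ fails when $t_2$ is small relative to $\rho'$. What follows is not yet a proof, however. The decisive sign is left as ``the crux'', and the route you propose for it (moving one triangle first) fails in exactly the regime you are worried about.

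For the single switch $K(t_1,t_2)\to K(t_1-1,t_2+2)$ the double-eigenvector identity is exact. Write $\rho=\rho(K(t_1,t_2))$ and $\rho''=\rho(K(t_1-1,t_2+2))$. Using the eigen-equations in both graphs, the sign of $\rho''-\rho$ equals the sign of
\[
\frac{t_2+2}{\rho''}+\frac{t_2(\rho-1)}{\rho^2-t_2}-1 ,
\]
which tends to $-1$ as $t_1\to\infty$ with $t_2$ fixed. So the intermediate inequality is false there. For example, with $k=3$ and $n=24$ one gets $\rho(K(10,1))\approx 5.179>\rho(K(9,3))\approx 5.167$. The triangles cannot be transferred one at a time.

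The missing idea is to perform the whole switch at once and take $y$ to be the Perron vector of the target $G_2'\cong K(0,n-k)$. Then
\[
(\rho(G_2')-\rho(G_2))\,y^Tx=2t_1\bigl(x_{u_1}(y_v-y_{u_1})+x_vy_{u_1}\bigr),
\]
and all you need is $y_v\ge y_{u_1}$. This is a property of $G_2'$ alone and does not depend on how $n-k$ splits into $2t_1+t_2$. The eigen-equation at $v$ in $G_2'$ reads $\rho(G_2')y_v=(n-k)y_{u_1}$. Moreover $\rho(G_2')<n-k$, because Lemma~\ref{lem::2.3} (or the explicit value in Lemma~\ref{lem::3.1}) gives $\rho(G_2')^2<(k-2)+2(n-k)\le (n-k)^2$ for $n\ge 4k$.

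This is the paper's argument. The paper obtains $\rho(G_2')<n-k$ less directly: it assumes $\rho(G_2')\le\rho(G_2)$ and bounds $\rho(G_2)<\sqrt{3n}$ with Lemma~\ref{lem::2.5}. You mention $y_v\ge y_{u_1}$ but never derive it, and without it the proof is incomplete.

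Finally, the strict inequality requires $t_1\ge 1$, since for $t_1=0$ the two graphs coincide. That is an implicit hypothesis of the statement, not a case with ``nothing to prove''.
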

\begin{proof}
Assume that $E_1=\{vu_i\mid 1\leq i\leq 2t_1\}$ and $E_2=\{u_iu_{i+1}\mid 1\leq i\leq 2t_1-1\text{ and } i \text{ is an odd integer}\}$. Let $G_2=K(t_1, t_2)$ and $G_2'=G_2-E_2+E_1$.  Clearly, $\rho(G_2')\cong\rho(K(0, 2t_1+t_2))$. Their vertices be labeled as in Fig. 2. It suffices to show $\rho(G_2)<\rho(G_2')$. 

Suppose to the contrary that $\rho(G_2)\geq \rho(G_2')$. First of all, for $k\geq 3$, it follows that
$e(G_2)=3t_1+2t_2+k-2$ and $\delta =1$ for $k\geq 3$.
Combining this with Lemma \ref{lem::2.5}, we have
\begin{align*}
\rho(G_2)& \leq \sqrt{2(3t_1+2t_2+k-2)-n+1}\\
& \leq \sqrt{6t_1+4t_2+2k-n-3}\\
&<\sqrt{4n-n}~~ (\text{since}~n=2t_1+t_2+k)\\
&=\sqrt{3n}.
\end{align*}
Furthermore, a simple calculation shows that $\rho(G_2')\leq \rho(G_2)<\sqrt{3n}<n-k$ for $k\geq3$ when $n\geq 4k$.
Let 
$x$ be the Perron vector of $A(G_2)$, and $y$ be the Perron vector of $A(G_2')$. By symmetry, we have $x_{z_i}=x_{z_1}$ for $2\leq i\leq k-2$, $x_{u_i}=x_{u_1}$ for $2\leq i\leq 2t_1$, $x_{v_i}=x_{v_1}$ for $2\leq i\leq t_2$, $y_{z_i}=y_{z_1}$ for $2\leq i\leq k-2$ and $y_{u_i}=y_{v_j}=y_{u_1}$ for $2\leq i\leq t_2$ and $1\leq j\leq t_2$. Notice that
$\rho(G_2')y_v=(n-k)y_{u_1}$. Then $y_v> y_{u_1}$ since $\rho(G_2')< n-k$. And then
\begin{align*}
y^T(\rho(G_2')-\rho(G_2))x& =y^T(A(G_2')-A(G_2))x \\
&=\sum_{vu_i\in E_1}(x_vy_{u_i}+x_{u_i}y_v)-\sum_{u_iu_{i+1}\in E_2}(x_{u_i}y_{u_{i+1}}+x_{u_{i+1}}y_{u_i})\\
&=2t_1(x_{u_1}y_v+x_vy_{u_1})-2t_1x_{u_1}y_{u_1}\\
&=2t_1(x_{u_1}(y_v-y_{u_1})+x_vy_{u_1})\\
&>0 ~~ (\text{since}\  y_v> y_{u_1}),
\end{align*}
a contradiction. It follows that $\rho(G_2')>\rho(G_2)$.
\end{proof}

\renewcommand\proofname{\bf Proof of Theorem \ref{thm::1}}
\begin{proof}

Suppose that $G^*$ is a graph that attains the maximum spectral radius among all minimally $(k, k)$-edge-connected graphs of order $n$. Let $x$ be the Perron vector of $A(G^*)$ with coordinate $x_{u^*}=\max\{x_i \mid i\in V(G^*)\}$. Denote by $B=\{v\mid d_{G^*}(v)=1\}$, $A=N(u^*)\setminus B$ and $R=V(G^*)\setminus N[u^*]$. According to $(i)$ of Lemma \ref{lem::2.7}, we have $0\leq|B|\leq k-2$. The graph $G^*$ is the intersection of some internal paths. The following assertions present the structural properties of the extremal graph $G^*$. 

{\flushleft\bf Claim 1.} $u^*$ is the unique cut vertex of $G^*$.
\renewcommand\proofname{\bf Proof}
\begin{proof}
If not, $G^*$ either contains no cut vertices or at least two cut vertices. If $G^*$ contains no cut vertices, then $|B|=0$. Let $p$ denote the number of internal paths. By Corollary \ref{cor::1}, the length of each internal path must be at least $k$. If $p=0$, then $G^*\cong C_n$. Let $C_n:=u^*u_2\cdots u_nu^*$ and define $G_1=G^*-\{u_2u_3\}+\{u^*u_3\}.$ 
By symmetry, $x_{u_i}=u^*$ for $2\leq i\leq n$. Moreover, $G_1$ remains minimally $(k,k)$-edge-connected for $n\geq 4k$. However, Lemma \ref{lem::2.4} implies $\rho(G_1)>\rho(G^*)$, a contradiction. If $p\geq 1$, consider an internal path $P_1:=v_1u_1u_2\cdots u_tv_2$ in $G^*$, where $v_1\neq u^*$ or $v_2\neq u^*$. Define 
$$G_2=G^*-\{u_1v_1,u_tv_2\}+\{u_1u^*,u_tu^*\}.$$ 
By Corollary \ref{cor::1}, $t+1\geq k$ ensures $G_2$ stays minimally $(k,k)$-edge-connected. Therefore, it follows that $\rho(G_2)>\rho(G^*)$ by Lemma \ref{lem::2.4}, a contradiction. We therefore conclude that $G^*$ must contain cut vertices. If $u^*$ is not a cut vertex of $G^*$, then there exists another cut vertex $u$ of $G^*$ such that $u^*\in C$, where $C$ is a component of $G^*-u$. For any $v\in N(u)\setminus N(u^*)$ with $v\notin V(C)$, let $G_3=G^*-\{uv\}+\{u^*v\}$, which is also minimally $(k,k)$-edge-connected. Consequently, $\rho(G_3)>\rho(G^*)$ by $x_{u^*}=\max\{x_i \mid i\in V(G^*)\}$ and Lemma \ref{lem::2.4} again, a contradiction. Thus, $u^*$ must be the unique cut vertex of $G^*$. Now, suppose there exists another cut vertex $u$. An analogous argument yields a minimally $(k,k)$-edge-connected graph whose spectral radius exceeds $\rho(G^*)$, a contradiction.
\end{proof}

{\flushleft\bf Claim 2.} $G^*[A]$ is isomorphic to the union of some independent edges and isolated vertices. 
\begin{proof}
On the one hand, $G^*[A]$ contains no cycles. Otherwise, suppose there exists a cycle $C_l\subset G^*[A]$ $(l\geq 3)$. Then there exists a wheel $W_{l+1}$ in $G'$, which would form a chorded cycle in $G^*$, contradicting with Corollary \ref{cor::1}. On the other hand, if $G^*[A]$ contains a path $P_3$, then it must also contain a chorded cycle of order 4, a contradiction.
\end{proof}
Let $A_1$ be the isolated vertex set of $G^*[A]$. Then $A_2=A\setminus A_1$ consists of some independent edges if $A_2\neq\emptyset$.

{\flushleft\bf Claim 3.} $N_R(u)=\emptyset$ for any $u \in A_2$.
\begin{proof}
Otherwise, there exists a vertex $v \in R$ and $u_2 \in A_2$ such that $vu_2\in E(G^*)$. We may further assume that $u_2^{\prime} \in N_{G^*}(u_2) \cap A_2$. If $N_{R}(v)=\emptyset$, then there exists a vertex $u \in A$ such that $uv\in E(G^*)$ due to $\delta(G^*) \geq 2$. It follows that
$$
\begin{cases}C=u^* u_2^{\prime} u_2 v u_1u^* \text { is a cycle with the chord } u^* u_2 & \text { if } u=u_1 \in A_1; \\ 
C=u^* u_2^{\prime} v u_2u^* \text { is a cycle with the chord } u_2^{\prime} u_2 & \text { if } u=u_2^{\prime} \in A_2; \\ 
C=u^* u_2^{\prime} u_2 v u_3u^* \text { is a cycle with the chord } u_2^* u_2 & \text { if } u=u_3 \in A_2.
\end{cases}
$$
It is impossible because any cycle of $G^*$ has no chord by Corollary \ref{cor::1}. So, $d_R(v) \geq 1$. However, in this situation, we assert that there exists a path $P:=v v_1 \cdots v_t$ in $G^*[R]$ such that $v_t$ is adjacent to some $u_i^{\prime} \in A$. Otherwise, $u_2 v$ will be a cut edge, a contradiction. By regarding $u$ as the above $u^*$, as similar above we can find a chorded cycle in $G^*$, a contradiction.
\end{proof}
If $A_2 \neq \emptyset$, then, from Claims 1-3, $\{u^*\} \cup A_2$ induces $t_1=\frac{|A_2|}{2}$'s triangles with a common vertex $u^*$. Moreover, Claim 2 shows that each of these triangles must be a leaf block of $G^*$. Now, we consider two cases:

{\flushleft\bf{Case 1.}} $|B|\leq k-3$.


{\flushleft\bf Claim 4.} $|B|= k-3$.
\begin{proof}
If not, then $|B|=t\leq k-4$. Note that $u^*v\in E(G^*)$ for any $v\in B$ by Claim 1. Moreover, $G^*\setminus B$ is minimally $(k-t, k-t)$-edge-connected by Lemma \ref{lem::2.7}. First, we show that $G^*\setminus B$ contains no internal paths. Suppose otherwise, and let $P_1:=u^*u_1u_2\cdots u_p$ be an internal path in $G^*\setminus B$. By Corollary \ref{cor::1}, we have $p\geq k-t$. Construct $G_4=G^*-\{u_1u_2\}+\{u_2u^*\}$. Then Lemma \ref{lem::2.4} implies $\rho(G_4)>\rho(G^*)$. It is a contradiction since $G_4$ is also minimally $(k,k)$-edge-connected and $|B|=t+1\leq k-3$. So there is no internal path in $G^*\setminus B$, and then $G^*\setminus B$ must consist of cycles intersecting only at $u^*$. Using similar reasoning, we can construct a minimally $(k,k)$-edge-connected graph with greater spectral radius than $\rho(G^*)$, again yielding a contradiction.  We conclude that $|B|= k-3$.
\end{proof}

{\flushleft\bf Claim 5.} $G^*[A_1\cup R]$ is isomorphic to the union of some path. 
\begin{proof}
Equivalently, we need to show $d_{G^*}(v)=2$ for any $v\in A_1\cup R$. If not, there exists $v_1\in A_1\cup R$ with $d_{G^*}(v_1)=r\geq 3$. Suppose $v_1$ belongs to a leaf block $B_1$. Let $v_1u_i'\in E(G^*)$ for $1\leq i\leq r$. This yields that $u_i'$ must be a  vertex of the internal path $P_i$ in $B_1$ for any $1\leq i\leq r$. Moreover, $|P_i|=t+1\geq4$ holds for each path. Consider $P_1:=v_1u_1'u_2\cdots u_{t-1}v_{2}$ and let $G_5=G^*-\{v_1u_1', u_{t-1}v_2\}+\{u_1'u^*, u_{t-1}u^*\}$. The graph $G_5$ remains minimally $(k,k)$-edge-connected. Combining this with Lemma \ref{lem::2.4}, we obtain $\rho(G_5)>\rho(G^*)$, which contradicts the maximality of $\rho(G^*)$. Thus, $d_{G^*}(v)=2$ for any $v\in A_1\cup R$.
\end{proof} 
Thus, $G^*[u^*\cup A_1\cup R]$ is isomorphic to some cycles that have a common vertex $u^*$. 

{\flushleft\bf Claim 6.} There are no cycles with a length greater than 4 in $G^*$.
\begin{proof}
If not, there exists a cycle $C_l:=u^*u_2\cdots u_lu^*$ with $l\geq 5$. Let $G_6=G^*-\{u_2u_3, u_{l-1}u_l\}+\{u^*u_3, u^*u_{l-1}, u_2u_l\}$. Then $G_6$ is also a minimally $(k,k)$-edge-connected graph. Combining with Lemma \ref{lem::2.4} again, we have $\rho(G_6)>\rho(G^*)$, a contradiction.
\end{proof}

{\flushleft\bf Claim 7.} There is at most one cycle of length 4 in $G^*$.
\begin{proof}
Otherwise, there exists two cycles $C_1:=u^*v_1v_2v_3u^*$ and $C_2:=u^*w_1w_2w_3u^*$ in $G^*$. Let $G_7=G^*-\{v_1v_2,w_1w_2,v_2v_3,w_2w_3\}+\{v_1v_3,w_1w_3,v_2u^*, w_2u^*,v_2w_2\}$. Then $G_7$ is also minimally $(k,k)$-edge-connected and $\rho(G_7)>\rho(G^*)$ by Lemmas \ref{lem::2.4} and \ref{lem::2.8}, a contradiction.
\end{proof}

Consider a $4$-cycle $C:=u^*u_1u_2u_3u^*$ in $G^*$. Let $G_8=G^*-\{u_1u_2\}+\{u^*u_2\}$. Then $G_8$ remains minimally $(k,k)$-edge-connected and  $\rho(G_8)>\rho(G^*)$ by Lemma \ref{lem::2.4}, a contradiction. If no $4$-cycle exists in $G^*$, then $G^*\cong K_2(\frac{n-k+2}{2},0,0)$. By Lemmas \ref{lem::3.1} and \ref{lem::3.2}, we have $\rho(K_2(\frac{n-k+2}{2},0,0))<\rho(K^{k-2}_{2, n-k})$. This again leads to a contradiction since $K^{k-2}_{2, n-k}$ is minimally $(k,k)$-edge-connected.

{\flushleft\bf{Case 2.}} $|B|= k-2$. 

In this case, Lemma \ref{lem::2.7}$(iv)$ implies that $G^*\setminus B$ is minimally $(2,2)$-edge-connected. Since any block of a minimally $(2,2)$-edge-connected graph is itself minimally $(2,2)$-edge-connected, let $B_1, B_2, \ldots, B_t$ be the $t$ blocks of $G^*\setminus B$. Recall that $A=N(u^*)\setminus B$, $R=V(G^*)\setminus N[u^*]$ and $A_1$ is the isolated vertex set of $G^*[A]$. Let $V(B_l)\cap A_1=\{v_{l1}, v_{l2}, \ldots, v_{lb_{l}}\}$ for $1\leq l\leq t$.

{\flushleft\bf Claim 8.} $E(R)=\emptyset$.
\begin{proof}
Suppose there exists $b_{l}\geq 3$ for some $1\leq l\leq t$, without loss of generality, assume $b_{1}\geq 3$. 
We first prove $d_{G^*}(v)=2$ for all $v\in A_1$. If not, there exists $p\leq b_{1}$ with $d_{G^*}(v_{1p})=r\geq3$. This would imply $\kappa'_2(G^*-u^*v_{1p})\geq 2$, but Lemma \ref{lem::2.6} shows $\kappa'_2(G^*-u^*v_{1p})=1$, a contradiction. Thus, $d_{G^*}(v)=2$ hold for all $v\in A_1$.  
Now consider any $v_{1i}\in A_1$ with $|N_R(v_{1i})\cap N_R(v_{1j})|=\emptyset$ where $i\neq j$. Let $v_{1i}w\in E(G^*)$ and let $$G_9=G^*-\{wv\mid v\in N(w)\cap R\}+\{u^*w\}+\{u^*v\mid v\in N(w)\cap R\}.$$ Then $G_9$ remins minimally $(k,k)$-edge-connected, and Lemma \ref{lem::2.4} yields $\rho(G_9)>\rho(G^*)$, a contradiction.
For any $v_{1i}\in A_1$ with $|N_R(v_{1i})\cap N_R(v_{1j})|\neq\emptyset$ where $i\neq j$, let $N_R(v_{1i})=u$. Then $G^*[N_{A_1}[u]]\cong K_{2,N_{A_1}(u)}$.
Suppose $N_R(A_1\cap B_1)=\{w_1, w_2,\ldots, w_q\}$. 
If $q\geq 3$, we claim $N_R(w_i)\cap N_R(w_j)\cap N_R(w_k)=\emptyset$ where $i\neq j\neq k$. 
Otherwise, let $N_R(v_{1i})\cap N_R(v_{1j})\cap N_R(v_{1k})=w$ and consider $G^*-v_{1i}w$. Since $\kappa'_2(G^*-v_{1i}w)\geq 2$ would hold, this contradicts Lemma \ref{lem::2.6} which gives $\kappa'_2(G^*-v_{1i}w)=1$. 
We now prove that $|N_R(A_1\cap B_1)|\leq 2$. Suppose otherwise that $|N_R(A_1\cap B_1)|\geq 3$ and let $\{w_1,w_2,w_3\}\subset N_R(A_1\cap B_1)$. Since $w_1$ and $w_2$ belong to the same block $B_1$, there exists a path $P$ connecting them in $R$. Moreover, as ${w_1,w_2,w_3}\subseteq B_1$, there must be exactly one path $P_1$ between $w_3$ and some vertex $v$ on $P$.
We further claim that $d_{G^*}(w)=2$ for any $w$ with $wv\in E(P_1)$. If not, we would have $\kappa'_2(G^*-wv)\geq 2$. This again leads to a contradiction when combined with Lemma \ref{lem::2.6}, as we find $\kappa'_2(G^*-wv)=1$. Consider a graph $G'=G^*-wv+wu^*$. Then $G'$ is also a minimally $(k,k)$-edge-connected graph. Combining with Lemma \ref{lem::2.4} again, we have $\rho(G')>\rho(G^*)$, a contradiction.
Thus, we have $|N_R(A_1\cap B_1)|\leq 2$. If $|N_R(A_1\cap B_1)|=1$, then $B_1\cong K_{2,b_1}$. 
Assume $|N_R(A_1\cap B_1)|=2$ with $w_1, w_2\in N_R(A_1)$. We assert that $d_{R}(w_1)=d_{R}(w_2)=1$. If $d_{R}(w_1)\geq 2$ for some $w_1w'\in E(G^*)$, then $\kappa'_{2}(B_1-w_1w')=2$ since $B_1-w_1w'$ contains a cycle, a contradiction.
When $N_R(w_1)\cap N_R(w_2)=\emptyset$, let $w_1w'_1\in E(G^*)$ and $w_2w'_2\in E(G^*)$, and define $$G_{10}=G^*-w_1w'_1-w_2w'_2+u^*w'_1+u^*w'_2.$$ Then $G_{10}$ remains minimally $(k,k)$-edge-connected graph. Combining with Lemma \ref{lem::2.4} again, we have $\rho(G_{10})>\rho(G^*)$, a contradiction. For $N_R(w_1)\cap N_R(w_2)=\{w^*\}$, we prove $N_{G^*}(w^*)=\{w_1, w_2\}$. If not, $\kappa'_2(G^*-w^*w_1)\geq 2$ would hold. Combining with Lemma \ref{lem::2.6} again, which contradicts with $\kappa'_2(G^*-w^*w_1)=1$. Now, consider $G^*-w_1w^*+u^*w^*$. We have $B_1\cong K_{2,i_1}$ and $\rho(G^*-w_1w^*+u^*w^*)>\rho(G^*)$ by Lemma \ref{lem::2.4} again, a contradiction. We conclude that when $b_{l}\geq 3$ exists, $E(R)=\emptyset$ must hold.

Suppose there exists $b_{l}=2$ where $1\leq l\leq t$, without lose of generality, assume that $b_{1}= 2$. Then $B_1\cap A_1=\{v_{11}, v_{12}\}$. 
Suppose that $N_R(v_{11})\cap N_R(v_{12})=I=\{w_1, w_2,\ldots, w_q\}$. If $q\emph{}\geq 1$, then we assert that $N_{R\setminus I}(v_{1i})=\emptyset$ for $1\leq i\leq 2$. Otherwise, there is a vertex $w\in R\setminus I$ that is adjacent to a vertex $v_{11}\in A_1$. Then $d_{G^*}(w_i)=2$ for $1\leq i\leq q$. If not, $\kappa'_2(B_1-v_{11}w_i)=2$ since $w_i$ is not a cut vertex, a contradiction. Assume that a path $P:=ww'_1\cdots w'_p\subseteq R$. If $w'_p=w \in R$, then $v_{11}w$ is a cut edge, which contradicts that $B_1$ is a block. If $w'_p \in N_R(v_{11})$, then $v_{11}$ is a cut vertex, which also contradicts that $B_1$ is a block. If $w'_p \in N_R(v_{12})$, then let $$G_{11}=G^*-v_{11}w-v_{12}w'_p+u^*w+u^*w'_p.$$ It is evident that $G_{11}$ is also a minimally $(k,k)$-edge-connected graph and $\rho(G_{11})>\rho(G^*)$, a contradiction. 
If $q=0$, then let $G_{12}=G^*-\{v_{11}v\mid v\in N_R(v_{11})\}-\{v_{12}v\mid v\in N_R(v_{12})\}+\{u^*v\mid v\in N_R(v_{11})\}+\{u^*v\mid v\in N_R(v_{11})\}+\{v_{11}v_{12}\}$. Obviously, $G_{12}$ is also a minimally $(k,k)$-edge-connected graph and $\rho(G_{12})>\rho(G^*)$ by Lemma \ref{lem::2.4}, a contradiction. Thus, in the case where exists $b_{l}= 2$, we also have $E(R)=\emptyset$.
\end{proof}
By Claim 3, every vertex in $R$ is adjacent only to some vertices in $A_1$, that is, $N_{A}(w_i)=N_{A_1}(w_i)$ for any $w_i \in R$.

{\flushleft\bf Claim 9.} $G^*\cong K(0, n-k)$ for $|V(R)| =1$.
\begin{proof}
We may assume that $R=\{w_1\}$, which leads to $G^*[N_{A_1}(w_1) \cup\{w_1, u^*\}] \cong K_{2, a_1}$, where $a_1=|N_{A_1}(w_1)|$. Consequently,
$G^* \cong K(t_1, a_1)$ for some positive $t_1 \geq 0, a_1 \geq 2$ satisfying $2 t_1+ a_1=n-k$ by Claim 8. Proposition \ref{pro::3.2} then gives $\rho(G^*)=\rho(K(t_1, a_1))<\rho(K(0, 2t_1+a_1))=\rho(K(0, n-k))$.
\end{proof}

{\flushleft\bf Claim 10.} $|N_{A_1}(w_1) \cap N_{A_1}(w_2)|=0$ or 2 for $|V(R)| \geq 2$ where $w_1 \neq w_2 \in V(R)$.
\begin{proof}
First suppose $|N_{A_1}(w_1) \cap N_{A_1}(w_2)| \geq 3$, and take $\{v_1, v_2, v_3\} \subseteq N_{A_1}(w_1) \cap N_{A_1}(w_2)$. Then $G^*$ contains a 6-cycle $C_1:=u^*v_1 w_1 v_2 w_2 v_3u^*$ with chord $u^*v_2$, a contradiction. Next we prove $|N_{A_1}(w_1) \cap N_{A_1}(w_2)| \neq 1$. If equality holds with $N_{A_1}(w_1) \cap N_{A_1}(w_2)=\{v\}$, then $G^*$ contains a 6-cycle $C_2=u^* w_1^{\prime} w_1 v w_2 w_2^{\prime}u^*$ with chord $u^* v$, where $w_1^{\prime} \in N_{A_1}(w_1) \setminus v$ and $w_2^{\prime} \in N_{A_1}(w_2) \setminus v$, a contradiction. We therefore conclude the assertion holds.
\end{proof}

\begin{figure}[http]
\centering
\begin{tikzpicture}[x=1.00mm, y=1.00mm, inner xsep=0pt, inner ysep=0pt, outer xsep=0pt, outer ysep=0pt]
\path[line width=0mm] (108.05,34.94) rectangle +(63.93,45.26);
\definecolor{L}{rgb}{0,0,0}
\definecolor{F}{rgb}{0,0,0}
\path[line width=0.30mm, draw=L, fill=F] (140.02,60.05) circle (0.50mm);
\path[line width=0.30mm, draw=L, fill=F] (156.71,74.08) circle (0.50mm);
\path[line width=0.30mm, draw=L, fill=F] (152.01,74.08) circle (0.50mm);
\path[line width=0.30mm, draw=L, fill=F] (128.50,74.08) circle (0.50mm);
\path[line width=0.30mm, draw=L, fill=F] (133.20,74.08) circle (0.50mm);
\path[line width=0.30mm, draw=L, fill=F] (137.90,74.08) circle (0.50mm);
\path[line width=0.30mm, draw=L, fill=F] (142.60,74.08) circle (0.50mm);
\path[line width=0.30mm, draw=L, fill=F] (117.93,72.39) circle (0.50mm);
\path[line width=0.30mm, draw=L, fill=F] (117.93,68.63) circle (0.50mm);
\path[line width=0.30mm, draw=L, fill=F] (117.93,58.29) circle (0.50mm);
\path[line width=0.30mm, draw=L, fill=F] (117.93,65.93) circle (0.20mm);
\path[line width=0.30mm, draw=L, fill=F] (117.93,63.58) circle (0.20mm);
\path[line width=0.30mm, draw=L, fill=F] (117.93,61.23) circle (0.20mm);
\path[line width=0.15mm, draw=L] (128.48,74.10) -- (140.14,59.93);
\path[line width=0.15mm, draw=L] (133.32,74.10) -- (139.98,60.20);
\path[line width=0.15mm, draw=L] (137.93,74.33) -- (140.25,60.05);
\path[line width=0.15mm, draw=L] (142.54,73.87) -- (140.14,59.82);
\path[line width=0.15mm, draw=L] (151.89,74.33) -- (140.14,59.82);
\path[line width=0.15mm, draw=L] (156.96,74.10) -- (140.49,60.17);
\path[line width=0.15mm, draw=L] (117.93,72.39) -- (139.78,59.93);
\path[line width=0.15mm, draw=L] (117.81,68.39) -- (139.98,59.87);
\path[line width=0.15mm, draw=L] (118.28,58.17) -- (140.25,60.17);
\path[line width=0.15mm, draw=L] (128.62,73.96) -- (133.32,73.96);
\path[line width=0.15mm, draw=L] (138.14,73.96) -- (142.96,73.96);
\path[line width=0.15mm, draw=L] (152.24,73.96) -- (157.06,73.96);
\path[line width=0.15mm, draw=L, fill=F] (115.69,49.12) circle (0.50mm);
\path[line width=0.15mm, draw=L, fill=F] (118.04,49.12) circle (0.50mm);
\path[line width=0.15mm, draw=L, fill=F] (122.75,49.12) circle (0.50mm);
\path[line width=0.15mm, draw=L, fill=F] (127.44,49.12) circle (0.50mm);
\path[line width=0.15mm, draw=L, fill=F] (129.80,49.12) circle (0.50mm);
\path[line width=0.15mm, draw=L, fill=F] (134.50,49.12) circle (0.50mm);
\path[line width=0.15mm, draw=L, fill=F] (139.20,49.12) circle (0.50mm);
\path[line width=0.15mm, draw=L, fill=F] (143.90,49.12) circle (0.50mm);
\path[line width=0.15mm, draw=L, fill=F] (148.60,49.12) circle (0.50mm);
\path[line width=0.15mm, draw=L, fill=F] (153.30,49.12) circle (0.50mm);
\path[line width=0.15mm, draw=L, fill=F] (160.35,49.12) circle (0.50mm);
\path[line width=0.15mm, draw=L, fill=F] (165.05,49.12) circle (0.50mm);
\path[line width=0.15mm, draw=L] (115.93,49.24) -- (139.67,60.05);
\path[line width=0.15mm, draw=L] (118.04,49.00) -- (139.78,60.05);
\path[line width=0.15mm, draw=L] (140.02,59.93) -- (122.75,49.24);
\path[line width=0.15mm, draw=L] (139.90,60.05) -- (127.56,49.24);
\path[line width=0.15mm, draw=L] (139.90,59.93) -- (129.91,49.36);
\path[line width=0.15mm, draw=L] (139.90,60.05) -- (134.50,49.12);
\path[line width=0.15mm, draw=L] (139.90,60.05) -- (139.20,49.36);
\path[line width=0.15mm, draw=L] (140.14,59.93) -- (143.78,49.12);
\path[line width=0.15mm, draw=L] (140.02,60.05) -- (148.48,49.24);
\path[line width=0.15mm, draw=L] (140.02,59.93) -- (153.42,49.36);
\path[line width=0.15mm, draw=L] (139.98,59.87) -- (160.35,49.00);
\path[line width=0.15mm, draw=L] (139.90,60.05) -- (165.17,49.24);
\path[line width=0.15mm, draw=L, fill=F] (118.98,41.01) circle (0.50mm);
\path[line width=0.15mm, draw=L, fill=F] (130.74,41.01) circle (0.50mm);
\path[line width=0.15mm, draw=L] (115.69,48.77) -- (118.98,41.01);
\path[line width=0.15mm, draw=L] (118.04,49.24) -- (118.98,41.01);
\path[line width=0.15mm, draw=L] (122.75,49.24) -- (118.98,41.01);
\path[line width=0.15mm, draw=L] (127.44,49.12) -- (130.85,40.90);
\path[line width=0.15mm, draw=L] (129.80,49.24) -- (130.74,41.01);
\path[line width=0.15mm, draw=L] (134.50,49.12) -- (130.62,40.66);
\path[line width=0.15mm, draw=L, fill=F] (135.44,41.01) circle (0.50mm);
\path[line width=0.15mm, draw=L, fill=F] (137.79,41.01) circle (0.50mm);
\path[line width=0.15mm, draw=L, fill=F] (142.49,41.01) circle (0.50mm);
\path[line width=0.15mm, draw=L, fill=F] (147.19,41.01) circle (0.50mm);
\path[line width=0.15mm, draw=L, fill=F] (149.54,41.01) circle (0.50mm);
\path[line width=0.15mm, draw=L, fill=F] (154.24,41.01) circle (0.50mm);
\path[line width=0.15mm, draw=L, fill=F] (161.29,41.01) circle (0.50mm);
\path[line width=0.15mm, draw=L, fill=F] (163.64,41.01) circle (0.50mm);
\path[line width=0.15mm, draw=L, fill=F] (168.34,41.01) circle (0.50mm);
\path[line width=0.15mm, draw=L] (139.08,49.24) -- (135.55,40.90);
\path[line width=0.15mm, draw=L] (148.48,48.89) -- (147.42,41.48);
\path[line width=0.15mm, draw=L] (139.20,49.12) -- (137.79,41.13);
\path[line width=0.15mm, draw=L] (139.31,49.12) -- (142.60,41.13);
\path[line width=0.15mm, draw=L] (143.78,49.36) -- (135.32,40.66);
\path[line width=0.15mm, draw=L] (143.90,49.24) -- (137.67,40.90);
\path[line width=0.15mm, draw=L] (144.01,49.24) -- (142.49,40.90);
\path[line width=0.15mm, draw=L] (148.60,49.24) -- (149.54,40.90);
\path[line width=0.15mm, draw=L] (148.48,49.12) -- (154.24,41.01);
\path[line width=0.15mm, draw=L] (153.30,49.24) -- (147.30,41.13);
\path[line width=0.15mm, draw=L] (153.42,49.24) -- (149.54,41.25);
\path[line width=0.15mm, draw=L] (153.18,49.12) -- (154.24,41.01);
\path[line width=0.15mm, draw=L] (160.35,49.12) -- (161.29,41.01);
\path[line width=0.15mm, draw=L] (165.05,49.12) -- (161.29,41.01);
\path[line width=0.15mm, draw=L] (160.59,49.00) -- (163.64,41.13);
\path[line width=0.15mm, draw=L] (165.17,49.00) -- (163.76,41.13);
\path[line width=0.15mm, draw=L] (160.47,49.00);
\path[line width=0.15mm, draw=L] (160.59,49.00) -- (168.34,41.13);
\path[line width=0.15mm, draw=L] (165.05,49.00) -- (168.46,41.25);
\path[line width=0.15mm, draw=L] (117.99,64.89) ellipse (3.58mm and 10.71mm);
\path[line width=0.15mm, draw=L] (119.45,49.00) ellipse (5.41mm and 1.41mm);
\path[line width=0.15mm, draw=L] (131.21,49.00) ellipse (5.41mm and 1.41mm);
\path[line width=0.15mm, draw=L] (138.73,41.01) ellipse (5.41mm and 1.41mm);
\path[line width=0.15mm, draw=L] (150.48,41.01) ellipse (5.41mm and 1.41mm);
\path[line width=0.15mm, draw=L] (164.58,41.01) ellipse (5.41mm and 1.41mm);
\path[line width=0.15mm, draw=L, fill=F] (145.31,74.08) circle (0.20mm);
\path[line width=0.15mm, draw=L, fill=F] (147.54,74.08) circle (0.20mm);
\path[line width=0.15mm, draw=L, fill=F] (149.77,74.08) circle (0.20mm);
\path[line width=0.15mm, draw=L, fill=F] (122.98,40.90) circle (0.20mm);
\path[line width=0.15mm, draw=L, fill=F] (125.45,40.90) circle (0.20mm);
\path[line width=0.15mm, draw=L, fill=F] (127.68,40.90) circle (0.20mm);
\path[line width=0.15mm, draw=L, fill=F] (156.12,49.12) circle (0.20mm);
\path[line width=0.15mm, draw=L, fill=F] (157.29,40.90) circle (0.20mm);
\path[line width=0.15mm, draw=L, fill=F] (158.47,40.90) circle (0.20mm);
\path[line width=0.15mm, draw=L, fill=F] (138.96,40.90) circle (0.20mm);
\path[line width=0.15mm, draw=L, fill=F] (139.90,40.90) circle (0.20mm);
\path[line width=0.15mm, draw=L, fill=F] (141.08,40.90) circle (0.20mm);
\path[line width=0.15mm, draw=L, fill=F] (150.71,40.90) circle (0.20mm);
\path[line width=0.15mm, draw=L, fill=F] (151.65,40.90) circle (0.20mm);
\path[line width=0.15mm, draw=L, fill=F] (152.83,40.90) circle (0.20mm);
\path[line width=0.15mm, draw=L, fill=F] (166.93,40.90) circle (0.20mm);
\path[line width=0.15mm, draw=L, fill=F] (164.82,40.90) circle (0.20mm);
\path[line width=0.15mm, draw=L, fill=F] (165.76,40.90) circle (0.20mm);
\path[line width=0.15mm, draw=L, fill=F] (157.29,49.12) circle (0.20mm);
\path[line width=0.15mm, draw=L, fill=F] (156.24,40.90) circle (0.20mm);
\path[line width=0.15mm, draw=L, fill=F] (158.59,49.12) circle (0.20mm);
\draw(142.72,75.84) node[anchor=base west]{\fontsize{8.54}{10.24}\selectfont $t_1$};
\draw(110.35,63.34) node[anchor=base west]{\fontsize{8.54}{10.24}\selectfont $B$};
\draw(142.13,59.46) node[anchor=base west]{\fontsize{8.54}{10.24}\selectfont $u^*$};
\draw(110.05,48.53) node[anchor=base west]{\fontsize{8.54}{10.24}\selectfont $A_1$};
\draw(110.29,40.54) node[anchor=base west]{\fontsize{8.54}{10.24}\selectfont $R$};
\draw(116.75,51.47) node[anchor=base west]{\fontsize{5.69}{6.83}\selectfont $r_1$};
\draw(128.03,51.59) node[anchor=base west]{\fontsize{5.69}{6.83}\selectfont $r_p$};
\draw(136.73,37.37) node[anchor=base west]{\fontsize{5.69}{6.83}\selectfont $s_1$};
\draw(148.48,37.37) node[anchor=base west]{\fontsize{5.69}{6.83}\selectfont $s_2$};
\draw(162.58,37.37) node[anchor=base west]{\fontsize{5.69}{6.83}\selectfont $s_q$};
\draw(117.46,38.43) node[anchor=base west]{\fontsize{5.69}{6.83}\selectfont $w_1$};
\draw(129.21,38.43) node[anchor=base west]{\fontsize{5.69}{6.83}\selectfont $w_p$};
\draw(136.66,50.00) node[anchor=base west]{\fontsize{5.69}{6.83}\selectfont $v_1$};
\draw(140.47,49.88) node[anchor=base west]{\fontsize{5.69}{6.83}\selectfont $v_2$};
\draw(145.89,50.00) node[anchor=base west]{\fontsize{5.69}{6.83}\selectfont $v_3$};
\draw(149.70,49.88) node[anchor=base west]{\fontsize{5.69}{6.83}\selectfont $v_4$};
\draw(133.43,38.50) node[anchor=base west]{\fontsize{5.69}{6.83}\selectfont $w_1^{\prime}$};
\draw(144.54,38.50) node[anchor=base west]{\fontsize{5.69}{6.83}\selectfont $w_2^{\prime}$};
\end{tikzpicture}%
\begin{center}\small{Fig. 3 : The structure of $G^*$, where $t_1\geq 0$,  $p+\sum_{i=1}^{q}=|R|$ and $\sum_{i=1}^{p}r_i+2q=|A_1|$.
}\end{center}
\end{figure}
Note that $d_{A_1}(w_i) \geq 2$ for all $w_i \in R$. From Claims 8-10, we deduce that $G^*$ has the structure shown in Fig. 3. Specifically, for nonnegative integers $p$ and $q$, the induced subgraph $G^*[N_{A_1}(R) \cup R] \cong \bigcup_{i=1}^p K_{1, r_i} \bigcup_{j=1}^q K_{2, s_j}$ and satisfies $k-1+2 t_1+ \sum_{i=1}^p r_i+ \sum_{i=1}^q s_i+p+2 q=n(r_i, s_i \geq 2)$. We now proceed to determine the exact values of $p$ and $q$.

{\flushleft\bf Claim 11.}  $p \leq 1$ and $q \leq 1$.
\begin{proof}
We first prove $p \leq 1$. Suppose, for contradiction, that $p \geq 2$. Then there exist two vertices $w_1, w_2 \in R$ with $G^*[N_{A_1}(w_i) \cup\{w_i\}]\cong K_{1, r_i}$ for $i=1,2$. Without loss of generality, assume that $x_{w_1} \geq x_{w_2}$. Denote by
$$
G^{\prime}=G^*-\sum_{v \in N_{A_1}(w_2)} v w_2+\sum_{v \in N_{A_1}(w_2)} v w_1+u^*w_2+w_1w_2.
$$
Then $\rho(G^{\prime})>\rho(G^*)$ by Lemma \ref{lem::2.4}. However, $G^{\prime}\setminus B$ is also a minimally (2,2)-edge-connected graph since $N_{A_1}(w_1) \cup N_{A_1}(w_2) \cup$ $\{w_1, w_2, u^*\}$ induces a $K_{2, r_1+r_2+1}$ block in $G^{\prime}$, a contradiction.

We now prove $q \leq 1$. Suppose to the contrary that $q \geq 2$. Then $G^*[N_{A_1}(R) \cup R]$ contains induced subgraphs $K_{2, s_1}$ and $K_{2, s_2}$ $(s_1, s_2 \geq 2)$. Let $w_i^{\prime} \in V(K_{2, s_i}) \cap R$ for $i$=1, 2, with $N_{A_1}(w_1^{\prime})=\{v_1, v_2\}$ and $N_{A_1}(w_2^{\prime})=\{v_3, v_4\}$. By symmetry, $x_{v_1}=x_{v_2}$ and $x_{v_3}=x_{v_4}$. Without loss of generality, assume $x_{v_1} \geq x_{v_3}$, implying $x_{v_1}+x_{v_2} \geq x_{v_3}+x_{v_4}$. Let $G^{\prime \prime}=G^*-w_2^{\prime} v_3-w_2^{\prime} v_4+w_2^{\prime} v_1+w_2^{\prime} v_2+v_3v_4$. Clearly, $G^{\prime \prime}\setminus B$ remains minimally $(2, 2)$-edge-connected. Then $\rho(G^{\prime \prime})>\rho(G^*)$ by Lemma \ref{lem::2.4}, which contradicts with the maximality of $\rho(G^*)$.
\end{proof}

{\flushleft\bf Claim 12.} $G^*\cong K(0, n-k)$ for $|V(R)| \geq 2$.
\begin{proof}
Comparing Fig. 1 and Fig. 3 yields $t_2=r_1$ and $t_3=s_1$. Then $G^*\cong K_1(t_1, t_2, t_3)$ by Claim 11, where $t_1, t_3 \geq 0$, and either $t_2=0$ or $t_2 \geq 2$, satisfying
$$n=
\begin{cases}2t_1+t_2+k  & \text { if } t_3=0; \\  
2t_1+t_2+t_3+k+2  & \text { if } t_3\geq 1.
\end{cases}
$$
If $t_3=0$, then $t_2=n\!-\!2t_1\!-\!k$, and thus $G^*\cong K_1(t_1, t_2, 0)=K_1(t_1, n\!-\!k\!-\!2t_1, 0)$. If $t_3\geq 1$, then $t_2+t_3+2=n-2t_1-k$. By Proposition \ref{pro::3.1}, we have 
$$\rho(K_1(t_1, t_2, t_3))<\rho(K_1(t_1, t_2+t_3+1, 0))=\rho(K_1(t_1,n-2t_1-k, 0)).$$
By the maximality of $\rho(G^*)$ again, we get $G^*\cong K_1(t_1, n-k-2t_1, 0)=K(t_1, n-k-2t_1)$. Finally, Proposition \ref{pro::3.2} yields $t_1=0$. Therefore, $G^*\cong K(0, n-k)$.
\end{proof}
From the definitions of $K(0, n-k)$
and $K^{k-2}_{2, n-k}$, we immediately obtain $K(0, n-k)\cong K^{k-2}_{2, n-k}$.
Therefore, combining Claims 9 and 12 completes the proof of Theorem \ref{thm::1}.
\end{proof}

\section{ Proof of Theorem \ref{thm::2}}
Recall that $K^{k-2}_{2, \frac{m-k+2}{2}}$ is a graph obtained from by identifying the $k-2$ degree vertex of $K_{1,k-2}$ and the $\frac{m-k+2}{2}$ degree vertex of $K_{2,\frac{m-k+2}{2}}$. And $K^{k-2}_{2,\frac{m-k-1}{2}}*K_3$ is a graph obtained from by identifying the $k-2$ degree vertex of $K_{1,k-2}$, the vertex of a triangle and the $\frac{m-k-1}{2}$ degree vertex of $K_{2,\frac{m-k-1}{2}}$. 
Let $F^{i}_{\frac{m-i}{3}}$ be obtained from by identifying the $i$ degree vertex of $K_{1,i}$ and the maximal degree vertex of the friend graph with $\frac{m-i}{3}$ triangles. Denote by $v_1$ a maximal degree vertex of the friend graph with $t_1$ triangles, $v_2$ a maximal degree vertex of $K_{2, t_2}$, $v_3$ a vertex of $K_{1,k-2}$ with degree $k\!-\!2$. Let $F(t_1, t_2)$ be the graph obtained from the above three graphs by identifying $v_1, v_2$ and $v_3$.
Prior to proving Theorem \ref{thm::2}, we establish the following lemmas. 

\begin{lem} \label{lem::4.1}
If $m\geq (432k)^2$, then $\rho(K^{k-2}_{2,\frac{m-k+2}{2}})= \frac{\sqrt{2m+2\sqrt{2k^2-2km+m^2-8k+4m+8}}}{2}$ and
$\rho(K^{k-2}_{2,\frac{m-k+2}{2}})> \sqrt{m-k}$.
\end{lem}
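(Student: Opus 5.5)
We would mimic the proof of Lemma \ref{lem::3.1}. The graph $K^{k-2}_{2,\frac{m-k+2}{2}}$ has order $n=\frac{m-k+2}{2}+k$, since its size is $(k-2)+2\cdot\frac{m-k+2}{2}=m$. So it is exactly the graph $K^{k-2}_{2,n-k}$ with $n-k=\frac{m-k+2}{2}$.

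We take the same equitable partition used there: the centre $u^*$, the $k-2$ pendant vertices, the other vertex of degree $n-k$, and the $n-k$ vertices of degree $2$. By Lemma \ref{lem::2.1}, $\rho$ is the largest root of
$$f(x)=x^4+(k-2n+2)x^2-k^2+kn+2k-2n.$$
Substituting $2n=m+k+2$ gives
$$f(x)=x^4-mx^2+\tfrac{(k-2)(m-k+2)}{2}.$$
This is a quadratic in $x^2$, so the largest root satisfies
$$x^2=\frac{m+\sqrt{m^2-2(k-2)(m-k+2)}}{2}.$$
Expanding $m^2-2(k-2)(m-k+2)=m^2-2km+4m+2k^2-8k+8$ matches the radicand in the statement. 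Hence $\rho=\frac{\sqrt{2m+2\sqrt{\cdots}}}{2}$, and this part is routine algebra.

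For the inequality $\rho>\sqrt{m-k}$ we argue by comparison. It suffices to show
$$\sqrt{m^2-2(k-2)(m-k+2)}>m-2k.$$
This is valid because the right-hand side is positive when $m\geq(432k)^2$. Squaring, we need
$$m^2-2(k-2)m+2(k-2)^2>m^2-4km+4k^2,$$
which simplifies to $(2k+4)m>2k^2+8k-8$. This clearly holds for $m\geq(432k)^2$ and $k\ge 2$.

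As an alternative check, we could note that $f(\sqrt{m-k})=(m-k)^2-m(m-k)+\frac{(k-2)(m-k+2)}{2}<0$ for large $m$, since its leading behaviour is $-k(m-k)+\frac{(k-2)m}{2}\approx-\frac{(k+2)m}{2}<0$. Together with $f\to\infty$, this places the largest root above $\sqrt{m-k}$.

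The only point needing care is confirming which root of the quadratic in $x^2$ is the spectral radius, namely the $+$ sign. That follows from Lemma \ref{lem::2.1}: the spectral radius is the largest eigenvalue of the quotient matrix.
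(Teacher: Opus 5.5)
Your proof is correct. The formula part is essentially the paper's argument. The paper writes the $4\times 4$ quotient matrix directly in terms of $m$ and gets $x^4-mx^2+\frac12 km-\frac12 k^2-m+2k-2$, which is exactly your $x^4-mx^2+\frac{(k-2)(m-k+2)}{2}$. Recycling the polynomial of Lemma~\ref{lem::3.1} via $2n=m+k+2$ is an equivalent shortcut.

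For the inequality $\rho>\sqrt{m-k}$ you take a different route.

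\textbf{The paper's route.} It does no computation. The graph contains $K_{2,\frac{m-k+2}{2}}$, whose spectral radius is $\sqrt{2\cdot\frac{m-k+2}{2}}=\sqrt{m-k+2}$. Monotonicity (Lemma~\ref{lem::2.2}) then gives $\rho\geq\sqrt{m-k+2}>\sqrt{m-k}$. This is shorter, yields the slightly stronger bound $\rho\ge\sqrt{m-k+2}$, and uses no lower bound on $m$ at all.

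\textbf{Your route.} You compare the closed form directly and reduce the claim to $(2k+4)m>2k^2+8k-8$. This is self-contained and avoids Lemma~\ref{lem::2.2}. It needs only a mild condition such as $m\ge k+2$, far weaker than $m\ge(432k)^2$.

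Your alternative check via the sign of $f(\sqrt{m-k})$ together with $f\to\infty$ is also valid.
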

\renewcommand\proofname{\bf Proof}
\begin{proof}
The vertex set of $K^{k-2}_{2,\frac{m-k+2}{2}}$ has an equitable partition $\pi$ and its corresponding quotient matrix is
$$B_\pi=\left(
           \begin{array}{cccc}
             0 & 0 & \frac{m-k+2}{2} & k-2 \\
             0 & 0 & \frac{m-k+2}{2} & 0 \\
             1 & 1 & 0 & 0\\
             1 & 0 & 0 & 0\\
           \end{array}
         \right).$$
Then $f(x)=\det(xI_4-B_\pi)=x^4-mx^2+\frac{1}{2}km-\frac{1}{2}k^2-m+2k-2$.  By Lemma \ref{lem::2.1}, $\rho(K^{k-2}_{2,\frac{m-k+2}{2}})= \frac{\sqrt{2m+2\sqrt{2k^2-2km+m^2-8k+4m+8}}}{2}$. Moreover, $\rho(K^{k-2}_{2,\frac{m-k+2}{2}})>\sqrt{m-k}$ since $K^{k-2}_{2,\frac{m-k+2}{2}}$ contains $K_{2,\frac{m-k+2}{2}}$ as a proper spanning subgraph and $\rho(K_{2,\frac{m-k+2}{2}})=\sqrt{m-k+2}$. 
\end{proof}

\begin{lem} \label{lem::4.2}
If $m\geq (432k)^2$, then $\rho(K^{k-2}_{2,\frac{m-k-1}{2}}*K_3)$ is the largest root of $x^5-x^4+(1-m)x^3+(m-3)x^2+(\frac{1}{2}mk-\frac{1}{2}k^2-\frac{1}{2}k)x-1-\frac{k}{2}+m-\frac{mk}{2}+\frac{k^2}{2}=0$ and $\rho(K^{k-2}_{2,\frac{m-k-1}{2}}*K_3)> \sqrt{m-k}$.
\end{lem}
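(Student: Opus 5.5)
The plan is to follow the same template as Lemmas \ref{lem::3.1}, \ref{lem::3.2} and \ref{lem::4.1}: first compute the characteristic polynomial of a quotient matrix, then locate its largest root relative to $\sqrt{m-k}$ by evaluating the polynomial at that point. Set $t=\frac{m-k-1}{2}$. The graph $K^{k-2}_{2,t}*K_3$ has $(k-2)+3+2t=m$ edges, as required. Denote by $u^*$ the identified vertex, by $z_1,\dots,z_{k-2}$ the pendant vertices, by $a,b$ the other two triangle vertices, by $v_1,\dots,v_t$ the degree-two vertices of $K_{2,t}$, and by $w$ the other hub of $K_{2,t}$. The partition $\{u^*\}\cup\{z_i\}\cup\{a,b\}\cup\{v_i\}\cup\{w\}$ is equitable, with quotient matrix
$$B_\pi=\left(\begin{array}{ccccc} 0 & k-2 & 2 & t & 0\\ 1&0&0&0&0\\ 1&0&1&0&0\\ 1&0&0&0&1\\ 0&0&0&t&0\end{array}\right).$$

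Expanding $\det(xI_5-B_\pi)$ along the first row and substituting $t=\frac{m-k-1}{2}$ should reproduce the stated quintic
$f(x)=x^5-x^4+(1-m)x^3+(m-3)x^2+\big(\tfrac12mk-\tfrac12k^2-\tfrac12k\big)x-1-\tfrac k2+m-\tfrac{mk}2+\tfrac{k^2}2$.
As a sanity check, the $x^3$ coefficient is $-(k-2+2+2t)=-(m-1)$. Lemma \ref{lem::2.1} then identifies $\rho$ with the largest root of $f$.

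For the inequality, note that $f$ is monic, so it suffices to show $f(\sqrt{m-k})<0$. Put $s=m-k$ and $x=\sqrt s$, and group the terms by parity.
The odd part is
$x^3(s+1-m)+\tfrac{k(s-1)}{2}x = x\big[s(1-\tfrac k2)-\tfrac k2\big]$.
The even part is
$x^2(m-3-s)+c_0 = s(k-3)+c_0$, where $c_0=m(1-\tfrac k2)+\tfrac{k^2}{2}-\tfrac k2-1$.
For $k\ge3$ the odd part is about $-(\tfrac k2-1)s^{3/2}$, while the even part is only $O(km)$. Since $m\ge(432k)^2$ gives $\sqrt s\gg k$, the odd part dominates and $f(\sqrt{m-k})<0$.

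The main obstacle is bookkeeping rather than ideas. I must get the determinant expansion exactly right, and I must treat separately the case $k=2$, where the leading $s^{3/2}$ term vanishes. In that case the pendant class is empty, so I would drop that row and column and redo the comparison with the remaining lower-order terms. Failing that, I would use Lemma \ref{lem::2.2} with a suitable proper subgraph whose spectral radius already exceeds $\sqrt{m-k}$.
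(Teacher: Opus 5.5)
Your proposal is correct and follows the paper's proof. You use the same equitable partition (with the classes listed in a different order), the same quintic obtained via Lemma \ref{lem::2.1}, and the same sign check $f(\sqrt{m-k})<0$. Your appeal to monicity is all that is needed there; the paper's extra claim $f'(x)>0$ plays no role. You also do not need a separate treatment of $k=2$: your parity grouping simplifies exactly to $f(\sqrt{s})=-\frac{k-2}{2}s^{3/2}+\frac{k-4}{2}s-\frac{k}{2}\sqrt{s}+\frac{k-2}{2}$, which is negative for every $k\ge 2$ and $s\ge 1$.
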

\begin{proof}
The vertex set of $K^{k-2}_{2,\frac{m-k-1}{2}}*K_3$ has an equitable partition $\pi$ and its corresponding quotient matrix is
$$B_\pi=\left(
           \begin{array}{ccccc}
             0 & 0 & \frac{m-k-1}{2} & 2 & k-2 \\
             0 & 0 & \frac{m-k-1}{2} & 0 & 0\\
             1 & 1 & 0 & 0 & 0\\
             1 & 0 & 0 & 1 & 0\\
             1 & 0 & 0 & 0 & 0\\
           \end{array}
         \right).$$
Then $f(x)=\det(xI_5-B_\pi)=x^5-x^4+(1-m)x^3+(m-3)x^2+(\frac{1}{2}mk-\frac{1}{2}k^2-\frac{1}{2}k)x-1-\frac{k}{2}+m-\frac{mk}{2}+\frac{k^2}{2}$, and then $\rho(K^{k-2}_{2,\frac{m-k-1}{2}}*K_3)$ is the largest root of $f(x)=0$ by Lemma \ref{lem::2.1}. Moreover, one can verify that $f'(x)=5x^4-4x^3+3(1-m)x^2+2(m-3)x+\frac{1}{2}mk-\frac{1}{2}k^2-\frac{1}{2}k> 0$ and  $f(\sqrt{m-k})<0$.
Thus, $\rho(K^{k-2}_{2,\frac{m-k-1}{2}}*K_3)>\sqrt{m-k}$. 
\end{proof}

\begin{lem} \label{lem::4.3}
If $m\geq (432k)^2$, then $\rho(F^{k-3}_{\frac{m-k+3}{3}})$ is the largest root of $x^3-x^2+(1-\frac{2}{3}m-\frac{1}{3}k)x+k-3=0$ and $\rho(F^{k-3}_{\frac{m-k+3}{3}})<\sqrt{m-k}$. 
\end{lem}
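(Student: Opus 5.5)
The plan is to compute $\rho(F^{k-3}_{\frac{m-k+3}{3}})$ through an equitable partition and Lemma \ref{lem::2.1}, as in Lemmas \ref{lem::3.1}--\ref{lem::4.2}. Then we locate $\sqrt{m-k}$ relative to the largest root of the resulting cubic.

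Write $t=\frac{m-k+3}{3}$, so that $F^{k-3}_{t}$ has $(k-3)+3t=m$ edges. Let $c$ be the common centre. Partition $V(F^{k-3}_t)$ into $\{c\}$, the $k-3$ pendant vertices, and the $2t$ triangle vertices other than $c$. This partition is equitable with quotient matrix
$$B_\pi=\left(\begin{array}{ccc} 0 & k-3 & 2t\\ 1 & 0 & 0\\ 1 & 0 & 1\end{array}\right).$$
Expanding along the first row gives
$$\det(xI_3-B_\pi)=x^2(x-1)-(k-3)(x-1)-2tx=x^3-x^2-(k-3+2t)x+(k-3).$$
Substituting $2t=\frac{2m-2k+6}{3}$ gives
$$f(x)=x^3-x^2+\Big(1-\tfrac{2}{3}m-\tfrac{1}{3}k\Big)x+k-3 .$$
By Lemma \ref{lem::2.1}, $\rho(F^{k-3}_t)$ is the largest root of $f$, which is the first assertion.

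For the inequality, put $s=\sqrt{m-k}$. It suffices to show that $f(s)>0$ and that $f'(x)>0$ for all $x\geq s$. Together these give $f(x)>0$ on $[s,\infty)$, so every real root of $f$, in particular $\rho(F^{k-3}_t)$, lies below $s$.

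For the derivative, when $x\ge s$ we have
$$f'(x)=3x^2-2x+1-\tfrac{2}{3}m-\tfrac{1}{3}k .$$
The map $x\mapsto 3x^2-2x$ is increasing for $x\ge 1/3$, so $f'(x)\ge f'(s)$ there. Also $f'(s)\ge 3(m-k)-2\sqrt{m}-\frac{2}{3}m-\frac{k}{3}$, which is positive because $m\ge(432k)^2$.

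For the value at $s$, use $s^2=m-k$ to get
$$f(s)=s\Big(m-k+1-\tfrac{2}{3}m-\tfrac{1}{3}k\Big)-(m-k)+k-3=s\Big(\tfrac{m}{3}-\tfrac{4k}{3}+1\Big)-m+2k-3 .$$
The only step requiring care is checking that this is positive. Since $m\ge(432k)^2$, we have $s\ge\sqrt{m}/2$ (indeed much more), and $\frac{m}{3}-\frac{4k}{3}+1\ge\frac{m}{4}$. So the first term is at least $\frac{m^{3/2}}{8}$, which exceeds $m$ as soon as $\sqrt{m}>8$. Hence $f(s)>0$, and we conclude $\rho(F^{k-3}_{\frac{m-k+3}{3}})<\sqrt{m-k}$.
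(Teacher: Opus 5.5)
Your proposal is correct and follows the paper's proof. You use the same three-class equitable partition and quotient matrix, obtain the same cubic via Lemma~\ref{lem::2.1}, and conclude $\rho<\sqrt{m-k}$ from $f(\sqrt{m-k})>0$ together with $f'>0$. You also carry out the checks the paper leaves as ``one can verify'', and you correctly restrict the claim $f'(x)>0$ to $x\ge\sqrt{m-k}$; the paper states it without restriction, and it fails near $x=0$.
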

\begin{proof}
The vertex set of $F^{k-3}_{\frac{m-k+3}{3}}$ has an equitable partition $\pi$ and its corresponding quotient matrix is
$$B_\pi=\left(
           \begin{array}{ccc}
             0 & k-3 & \frac{2(m-k+3)}{3} \\
             1 & 0  & 0\\
             1 & 0 & 1 \\
           \end{array}
         \right).$$
Then $f(x)=\det(xI_3-B_\pi)=x^3-x^2+(1-\frac{2}{3}m-\frac{1}{3}k)x+k-3.$ By Lemma \ref{lem::2.1}, we can get $\rho(F^{k-3}_{\frac{m-k+3}{3}})$ is the largest root of $f(x)=0$. Moreover, one can verify that $f'(x)=3x^2-2x+1-\frac{2}{3}m-\frac{1}{3}k> 0$
and $f(\sqrt{m-k})>0$ for $m\geq (432k)^2$. Thus, $\rho(F^{k-3}_{\frac{m-k+3}{3}})<\sqrt{m-k}$. 
\end{proof}

\renewcommand\proofname{\bf Proof of Theorem \ref{thm::2}}
\begin{proof}
Suppose that $G^*$ is a graph that attains the maximum spectral radius among all minimally $(k,k)$-edge-connected graphs with size $m$. Denoted by $B=\{v \mid d(v)=1\}$ and $|B|=b$. Let $x$ be the Perron vector of $A(G^*)$ with coordinate $x_{u^*}=\max\{x_i \mid i\in V(G^*)\}$. We analyze the structure of $G^*$ through two cases:

{\flushleft\bf{Case 1.}} $b\leq k-3$.
 
Let $A=V(G^*)\setminus (B\cup{u^*})$. Suppose $A$ contains $l$ blocks, denoted $A_1$, $A_2, \ldots$, $A_l$. By Lemma \ref{lem::2.7}, each induced subgraph $G^*[A_i]$ is minimally $(k-b,k-b)$-edge-connected for $1\leq i\leq l$.
 
{\flushleft\bf Claim 1.} $A_i$ is isomorphic to a cycle.
\renewcommand\proofname{\bf Proof}
\begin{proof}
We equivalently show that $d_{A_i}(u)=2$ for all $u\in V(A_i)$. Suppose otherwise that some block $A_g$ is not a cycle. Then there exists $u\in V(A_g)$ with $d_{A_g}(u)\geq 3$, and  by Corollary \ref{cor::1}, $A_g$ contains at least three internal paths of length at least $k-b$. Let $P:=u_1v_1v_2\cdots v_lu_2$ be an internal path in $A_g$ such that there is still a cycle containing $u^*$ in $A_g\setminus P$, implying $u_1\neq u^*$ or $u_2\neq u^*$. We define $G_1=G^*-u_1v_1-u_2v_l+u^*v_1+u^*v_l$. Notice that $d_{G_1}(u_1)\geq 2$ and $d_{G_1}(u_2)\geq 2$. Then $G_1[A_g]$ is a minimally $(k-b, k-b)$-edge-connected graph clearly. In addition, the block $C:=u^*v_1v_2\cdots v_lu^*$ of $G_1$ is also minimally $(k-b, k-b)$-edge-connected. Thus, $G_1$ is minimally $(k, k)$-edge-connected. Since $x_{u^*}=\max\{x_i \mid i\in V(G^*)\}$, Lemma \ref{lem::2.4} implies $\rho(G_1)> \rho(G^*)$, a contradiction.
\end{proof}

{\flushleft\bf Claim 2.} $G^*$ has exactly one cut vertex $u^*$.
\begin{proof}
If not, $G^*$ either contains no cut vertices, or contains at least two cut vertices. If $G^*$ has no cut vertices, then $G^*\cong C_m$. Let $C_m:=u_1u_2\cdots u_mu_1$ and define $G'=G^*-\{u_{m-1}u_m\}+\{u_{m-1}u_1\}$. By symmetry, $x_{u_i}=u_1$ for $2\leq i\leq m$. Furthermore, $G'$ remains minimally $(k,k)$-edge-connected for $m\geq (432k)^2$. However, $\rho(G')>\rho(G^*)$ by Lemma \ref{lem::2.4}, a contradiction. Thus, $G^*$ must contain cut vertices.
Suppose there exists another cut vertex $u$ of $G^*$ with $u^*\in C$, where $C$ is a component of $G^*-u$. For any $v\in N(u)\setminus N(u^*)$ with $v\notin V(C)$, we have $G_2=G^*-\{uv\}+\{u^*v\}$ is minimally $(k,k)$-edge-connected. Consequently,  $\rho(G_2)>\rho(G^*)$ by $x_{u^*}=\max\{x_i \mid i\in V(G')\}$ and Lemma \ref{lem::2.4}, a contradiction. Thus, $G^*$ has exactly one cut vertex $u^*$.
\end{proof}
Therefore, $G^*\!-\!u^*$ must be the union of some isolated vertices and some paths. 

{\flushleft\bf Claim 3.} $b= k-3$.
\begin{proof}
Suppose to the contrary, $b\leq k-4$. According to Corollary \ref{cor::1}, each internal path in $A_i$ has length at least $k-b$ where $1\leq i\leq l$, implying $|V(A_i)|\geq 4$. Let $A_i=u^*u_1\cdots u_tu^*$, where $t\geq 3$, and let $G_3=G^*-u_{t-2}u_{t-1}+u_{t-2}u_{t}-u_{t-1}u_{t}+u_{t-1}u^*$. Then $G_3$ is also minimally $(k,k)$-edge-connected because $|B|=b+1\leq k-3$ and $G_3[A_i]$ is a minimally $(k-b-1,k-b-1)$-edge-connected graph. Note that $x_{u_t}> x_{u_{t-1}}$ by Lemma \ref{lem::2.8}. Thus, $\rho(G_3)> \rho(G^*)$ by Lemma \ref{lem::2.4}, a contradiction. 
\end{proof}

{\flushleft\bf Claim 4.} $|A_i|= 3$ for any $1\leq i\leq l$.
\begin{proof}
If not, there exists block $A_r$ $(1\leq r\leq l)$ with $|A_r|=t\geq 4$. Let $A_r=u^*u_1\ldots u_{t-1}u^*$ and define $G_4=G^*-\{u_1u_2\}+\{u^*u_2\}$. Then $G_4$ is a minimally $(k,k)$-edge-connected graph because $|B|=k-2$ and $G_4[A_r]$ is minimally $(2,2)$-edge-connected. 
So, $\rho(G_4)>\rho(G^*)$ by Lemma \ref{lem::2.4}, a contradiction. 
\end{proof}

Combining Claims $1$-$4$, we can obtain that $l=\frac{m-k+3}{3}$, which yields that $G^*\cong F^{k-3}_{\frac{m-k+3}{3}}$. By Lemma \ref{lem::4.3}, we have $\rho(F^{k-3}_{\frac{m-k+3}{3}})<\sqrt{m-k}$. If $m-k\equiv 0\pmod 2$, then $\rho(K^{k-2}_{2,\frac{m-k+2}{2}})> \sqrt{m-k}$ by Lemma \ref{lem::4.1} and $K^{k-2}_{2,\frac{m-k+2}{2}}$ is a minimally $(k,k)$-edge-connected graph, a contradiction. If $m-k\equiv 1\pmod 2$, then $\rho(K^{k-2}_{2,\frac{m-k-1}{2}}*K_3)> \sqrt{m-k}$ by Lemma \ref{lem::4.2} and $K^{k-2}_{2,\frac{m-k-1}{2}}*K_3$ is a minimally $(k,k)$-edge-connected graph, a contradiction.

{\flushleft\bf{Case 2.}} $b= k-2$.

We first assert that $B\subset N(u^*)$. If not, there exists $vw\in E(G^*)$ such that $d_{G^*}(v)=1$ and $w\neq u^*$. Let $G'=G^*+\{vu^*\}-\{vw\}$. Then $\rho(G')>\rho(G^*)$ by $x_{u^*}=\max\{x_i \mid i\in V(G^*)\}$ and Lemma \ref{lem::2.4}, a contradiction. Thus, the vertex $u^*$ is a cut vertex of $G^*$ and $G^*[u^*\cup B]\cong K_{1,k-2}$. Denoted by $K$ the induce subgraph of $u^*\cup B$ and $H=G^*- B$. 

{\flushleft\bf Claim 5.} $G^*$ has exactly one cut vertex $u^*$.
\begin{proof}
If not, there exists an additional cut vertex $u$ of $G^*$ with $u^*\in C$, where $C$ is a component of $G^*-u$. For any vertex $v\in N(u)\setminus N(u^*)$ and $v\notin V(C)$. Let $G_1=G^*-\{uv\}+\{u^*v\}$. It is evident that $G_1$ is also a minimally $(k,k)$-edge-connected graph. However, $\rho(G_1)>\rho(G^*)$ by $x_{u^*}=\max\{x_i \mid i\in V(G^*)\}$ and Lemma \ref{lem::2.4}. This contradiction establishes the assertion.
\end{proof}

{\flushleft\bf Claim 6.} $\frac{m+k}{2}+1 \leq|V(G^*)| \leq \frac{m+k}{2}+\sqrt{m}-1$.
\begin{proof}
For $m-k\equiv 0\pmod 2$, the graph $K^{k-2}_{2,\frac{m-k+2}{2}}$ is minimally $(k,k)$-edge-connected and $\rho(G^*) \geq \rho(K^{k-2}_{2,\frac{m-k+2}{2}})$. For $m-k\equiv 1\pmod 2$, the graph $K^{k-2}_{2,\frac{m-k-1}{2}}*K_3$ is minimally $(k,k)$-edge-connected and $\rho(G^*) \geq \rho(K^{k-2}_{2,\frac{m-k-1}{2}}*K_3)$. Thus, $\rho^2(G^*) \geq m-k$ by Lemmas \ref{lem::4.1} and \ref{lem::4.2}. We have $m-k\leq\rho^2(H)+\rho^2(K)$ by Lemma \ref{lem::2.3}. Additionally, $\rho^2(K) =k-2$ since $K\cong K_{1,k-2}$. Then $\rho^2(H)\geq m-2k+2$. Note that $e(H)=m-k+2$ and $\delta(H)\geq2$. We can deduce that $\rho(H)\leq\frac{1}{2}+\sqrt{2(m-k+2)-2|V(H)|+\frac{9}{4}}$ by Lemma \ref{lem::2.5}, which gives us $|V(H)|\leq \frac{m-k+2}{2}+\frac{1}{2}\sqrt{m-2k+2}+1$ and 
\begin{align*}
|V(G^*)|&=|V(H)|+|V(K)|-1 \\
&\leq (\frac{m-k+2}{2}+\frac{1}{2}\sqrt{m-2k+2}+1)+(k-1)-1 \\
&\leq \frac{m+k}{2}+\sqrt{m}-1.
\end{align*}
On the other hand, $H $ is minimally $(2, 2)$-edge-connected by Lemma \ref{lem::2.7}. Combining this with Lemma \ref{lem::2.10}, we find that $m-k+2 \leq 2(|V(H)|-2)$, that is, $|V(H)|\geq\frac{m-k+6}{2}$. As a result, we have $|V(G^*)|=|V(H)|+|V(K)|-1\geq \frac{m+k}{2}+1$.
\end{proof}
Let $\alpha=\frac{1}{36 k}$ and $U_0=\{v \in V(G^*): x_v>\alpha x_{u^*}\}$. For convenience, we denote $\rho^*=\rho(G^*)$. Since $\rho^*x_v=x_{u^*}$ holds for any vertex $v\in B$, this implies that $B\cap U_0=\emptyset$. Therefore, $U_0\subseteq V(H)$.

{\flushleft\bf Claim 7.} $|U_0|<2 \sqrt{m}$.
\begin{proof}
Recall that $\rho^* \geq \sqrt{m- k}$. Let $v$ be an arbitrary vertex in $U_0$.  According to the definition of $U_0$, it can be known that $\rho^* x_v>\sqrt{m- k} \cdot \alpha x_{u^*}$. Furthermore, 
$$
\rho^* x_v=\sum_{v \in N(v) \cap U_0} x_v+\sum_{v \in N(v) \setminus U_0} x_v \leq(d_{U_0}(v)+\alpha \cdot d_{V(G^*) \setminus U_0}(v)) x_{u^*} .
$$
Thus, $\alpha \sqrt{m- k}<d_{U_0}(v)+\alpha \cdot d_{V(G^*) \setminus U_0}(v)$. By summing this inequality over all vertices $v \in U_0$, then
$$
\alpha \sqrt{m- k} \cdot|U_0|<2 e(U_0)+\alpha \cdot e(U_0, V(G^*) \setminus U_0) \leq 4|U_0|+\alpha m,
$$
where the last inequality follows from $e(U_0) \leq 2|U_0|$ by Lemma \ref{lem::2.10}. Moreover, note that $m\geq (432k)^2$. Hence, $4|U_0| \leq \frac{\alpha}{3} \sqrt{m-k} \cdot|U_0|$. This leads us to conclude that
$$
\alpha \sqrt{m- k} \cdot|U_0|<\frac{\alpha}{3} \sqrt{m-k} \cdot|U_0|+\alpha m.
$$
Thus, $|U_0| \leq \frac{3 m}{2 \sqrt{m-k}}<2 \sqrt{m}$.
\end{proof}
Let $U=\{v \in V(G^*): x_v \geq \frac{1}{4} x_{u^*}\}$. Then $u^* \in U$ and $U \subseteq U_0$. Claim 7 yields additional properties of $U$.

{\flushleft\bf Claim 8.} $d(v)>(\frac{x_v}{x_{u^*}}-\frac{1}{6 k}) \frac{m}{2}$ for any $v \in U$.
\begin{proof}
Let $v_0 \in U$ be an arbitrary vertex and let us consider the case where $x_{v_0}=\beta x_{u^*}$ with $\frac{1}{4} \leq \beta \leq 1$. In the following, we show that $d(v_0)>(\beta-\frac{1}{6 k}) \frac{m}{2}$.
Define $M_0=V(G^*) \setminus\{v_0\}$. It is evident that $x_v \leq \alpha x_{u^*}$ for all $v \in M_0 \setminus U_0$. Thus, 
\begin{equation}\label{equ::2}
\begin{aligned}
\rho^{*2} x_{v_0} & =d(v_0) x_{v_0}+\sum_{v \in M_0 \backslash U_0} d_{N(v_0)}(v) x_v+\sum_{v \in M_0 \cap U_0} d_{N(v_0)}(v) x_v \\
& \leq(\beta \cdot d(v_0)+\sum_{v \in M_0 \backslash U_0} \alpha \cdot d_{N(v_0)}(v)+\sum_{v \in M_0 \cap U_0} d_{N(v_0)}(v)) x_{u^*}.
\end{aligned}
\end{equation}
Since $N(v_0) \subseteq M_0$, we can conclude that
\begin{equation}\label{equ::3}
\begin{aligned}
\sum_{v \in M_0 \setminus U_0} d_{N(v_0)}(v) \leq \sum_{v \in M_0} d_{M_0}(v)=2 e(M_0) \leq 2 m.
\end{aligned}
\end{equation}
Note that $v_0 \in U \subseteq U_0$ and $M_0 \cap U_0=U_0 \setminus\{v_0\}$. We have $\sum_{v \in M_0 \cap U_0} d_{N(v_0)}(v)=$ $\sum_{v \in U_0} d_{N(v_0)}(v)-d(v_0)$. On the other hand, $\sum_{v \in U_0} d_{N(v_0)}(v) \leq e(U_0)+e(U_0 \cup N(v_0))$. There $e(U_0) \leq 2|U_0|$ and $e(U_0 \cup N(v_0)) \leq 2|U_0|+d(v_0)$ by Lemma \ref{lem::2.10}. Thus,
\begin{equation}\label{equ::4}
\begin{aligned}
\sum_{v \in M_0 \cap U_0} d_{N(v_0)}(v) \leq 4|U_0|+ d(v_0).
\end{aligned}
\end{equation}
By substituting equations (\ref{equ::3}) and (\ref{equ::4}) into (\ref{equ::2}) and dividing both sides by $x_{u^*}$, we conclude that
$$
\rho^{*2} \beta \leq(\beta+1) d(v_0)+2 \alpha m+4|U_0|.
$$
Note that $|U_0|<2 \sqrt{m}$ by Claim 7. Furthermore, $\rho^{*2} \geq m- k$ according to Lemmas \ref{lem::4.1} and \ref{lem::4.2}, where $m\geq (432k)^2$. It follows that  $k \beta+4|U_0| \leq  \alpha m$. Consequently,
$$
(m-k)\beta \leq(\beta+1) d(v_0)+2\alpha m+4|U_0|,
$$
which implies that $d(v_0) \geq \frac{\beta-3 \alpha}{\beta+1} m$. It suffices to show $\frac{2(\beta-3 \alpha)}{\beta+1}>(\beta-\frac{1}{6 k}),$
or equivalently, $f_1(\beta) \triangleq \beta^2-\frac{\beta}{6 k}-\beta<0$ as $\alpha=\frac{1}{36 k}$. Recall that $\frac{1}{4} \leq \beta \leq 1$. It is easy to verify that $f_1(\beta)|_{\max }=f_1(1)=-\frac{1}{6 k}<0$. Hence, $d(v)>(\frac{x_v}{x_{u^*}}-\frac{1}{6 k}) \frac{m}{2}$ for every vertex $v \in U$.
\end{proof}

{\flushleft\bf Claim 9.} $x_v>(1-\frac{1}{6 k}) x_{u^*}$ for any $v \in U$.
\begin{proof}
By the definition of $U$, we observe that $x_v \geq \frac{1}{4} x_{u^*}$ for each $v \in U$. Now, suppose to the contrary that there exists a vertex $v_0 \in U$ such that $x_{v_0}=\beta x_{u^*}$, where $\frac{1}{4} \leq \beta \leq 1-\frac{1}{6 k}$. Denote $M=V(G^*)$, then
\begin{equation}\label{equ::5}
\begin{aligned}
\rho^{*2} x_{u^*}=\sum_{v \in M} d_{N(u^*)}(v) x_v=\sum_{v \in M \setminus U_0} d_{N(u^*)}(v) x_v+\sum_{v \in U_0} d_{N(u^*)}(v) x_v.
\end{aligned}
\end{equation}
Recall that $x_v \leq \alpha x_{u^*}$ for each $v \in M \setminus U_0$. Thus, 
\begin{equation}\label{equ::6}
\begin{aligned}
\sum_{v \in M \backslash U_0} d_{N(u^*)}(v) x_v \leq \sum_{v \in M} d_M(v) \cdot \alpha x_{u^*}=2 \alpha m x_{u^*}.
\end{aligned}
\end{equation}
By applying Lemma \ref{lem::2.10} and Claim 7, we find that $e(U_0) \leq 2|U_0|<4 \sqrt{m}$. This leads to the following result:
\begin{equation}\label{equ::7}
\begin{aligned}
\sum_{v \in U_0} d_{N(u^*)}(v)=e(U_0, N(u^*) \setminus U_0)+2 e(U_0) \leq m+4 \sqrt{m}.
\end{aligned}
\end{equation}
Next, denote by $\overline{N}(u^*)$ the set of vertices not adjacent to $u^*$, that is, $\overline{N}(u^*)=M \setminus N[u^*]$. Based on Claim 8, we know that $|N(u^*)| \geq(1-\frac{1}{6 k}) \frac{m}{2}$. Additionally, $|M| \leq \frac{m+k-2}{2}+\sqrt{m}$ by Claim 6. Given that $m\geq (432k)^2$, we determine that
$$
|\overline{N}(u^*)|=|M|-|N(u^*)| \leq \frac{m+k-2}{2}+\sqrt{m}-(1-\frac{1}{6 k}) \frac{m}{2}\leq\sqrt{m}+\frac{m}{11k}<\frac{m}{10k}.
$$
According to Claim 8, we also have $d(v_0)>(\beta-\frac{1}{6 k}) \frac{m}{2}$. Hence,  $d_{N(u^*)}(v_0) \geq d(v_0)-|\overline{N}(u^*)|>(\beta-\frac{11}{30 k}) \frac{m}{2}$. Notice that $v_0 \in U \subseteq U_0$ and $x_{v_0}-x_{u^*}=(\beta-1) x_{u^*}$. In view of (7), then
\begin{equation}\label{equ::8}
\begin{aligned}
\sum_{v \in U_0} d_{N(u^*)}(v) x_v & \leq \sum_{v \in U_0} d_{N(u^*)}(v) x_{u^*}+d_{N(u^*)}(v_0)(x_{v_0}-x_{u^*}) \\
& <(m+4 \sqrt{m}+(\beta-1)(\beta-\frac{11}{30  k}) \frac{m}{2}) x_{u^*}.
\end{aligned}
\end{equation}
Note that $\rho^*>\sqrt{m- k}$ by Lemmas \ref{lem::4.1} and \ref{lem::4.2}. Combining with (\ref{equ::5}), (\ref{equ::6}) and (\ref{equ::8}), we can obtain
\begin{equation}\label{equ::9}
\begin{aligned}
(m- k) x_{u^*}<\rho^{*2} x_{u^*}<(2 \alpha m+m+4 \sqrt{m}+(\beta-1)(\beta-\frac{11}{30 k}) \frac{m}{2}) x_{u^*}.
\end{aligned}
\end{equation}
Recall that $m\geq (432k)^2$ and $\alpha=\frac{1}{36 k}$. Then $ k+4 \sqrt{m} \leq \alpha m$. In view of (\ref{equ::9}), then
\begin{equation}\label{equ::10}
\begin{aligned}
(\beta-1)(\beta-\frac{11}{30 k})+6\alpha >0,
\end{aligned}
\end{equation}
where $6 \alpha=\frac{1}{6 k}$. Let $f_2(\beta)=(\beta-1)(\beta-\frac{11}{30 k})+\frac{1}{6 k}$, where $\frac{1}{4} \leq \beta \leq 1-\frac{1}{6 k}$. It is easy to check that $f_2(\beta)|_{\max }=f_2(\frac{1}{4})$. Consequently, we always have $f_2(\beta)<0$, which contradicts (\ref{equ::10}). Therefore, the proof is complete.
\end{proof}

{\flushleft\bf Claim 10.} $|U|=2$.
\begin{proof}
By Claims 8 and 9, we can conclude that  $d(u)>(1-\frac{1}{3 k}) \frac{m}{2}$ for every vertex $u \in U$. In particular, this means that $d(u^*)>(1-\frac{1}{6 k}) \frac{m}{2}$.
We first consider the case where $|U| \geq 3$. Let us select three vertices $u_1, u_2,  u_{3} \in U$. Then there holds
\begin{equation}\label{equ::11}
\begin{aligned}
|\bigcap_{i=1}^{3} N(u_i)| \geq \sum_{i=1}^{3}|N(u_i)|-2|\bigcup_{i=1}^{3} N(u_i)| \geq3(1-\frac{1}{6 k}) \frac{m}{2}-2|V(G^*)|.
\end{aligned}
\end{equation}
Where $2|V(G^*)| \leq m+k-2+ 2\sqrt{m}$ since $|V(G^*)| \leq \frac{m+k-2}{2}+\sqrt{m}$ by Claim 6. In view of (11), we can see that $|\cap_{i=1}^{3} N(u_i)| >\frac{m}{2}-\frac{m}{3k}>3$ for $m\geq (432k)^2$. Thus, $G^*$ contains a copy of $K_{3, 3}$. However,  $\kappa'_2(G^*\setminus B-e)\geq 2$ for any edge $e\in K_{3, 3}$, which contradicts Lemma \ref{lem::2.6}. Therefore, we deduce that every minimally $(2,2)$-edge-connected graph cannot contain $K_{3, 3}$ as a subgraph, a contradiction. Hence, $|U| \leq 2$.

Next, we consider the case where $|U| = 1$, that is, $U=\{u^*\}$. Denote $M=V(G^*)$. By the definition of $U$, we know that $x_u<\frac{1}{4} x_{u^*}$ for every $u \in M \setminus U$. Noting that
$$
\rho^{*2} x_{u^*}=\sum_{u \in M} d_{N(u^*)}(u) x_u \leq d(u^*)x_{u^*}+\frac{1}{4}(2(m-d(u^*)))x_{u^*}=\frac{1}{2}(m+d(u^*))x_{u^*}.
$$
On the other hand, $\rho^{*2}> m-k$. Then $m-k\leq \frac{1}{2}(m+d(u^*))$. Furthermore, $d(u^*)< |V(G^*)|<\frac{m+k-2}{2}+\sqrt{m}$, then $\frac{1}{2}(m+\frac{m+k-2}{2}+\sqrt{m})<m-k$ for $m\geq (432k)^2$, a contradiction. Thus, $|U|=2$.
\end{proof}
According to Claim 10, we may assume that $U=\{u^*, u_1\}$. Now, denote by $V$ the common neighbourhood of vertices in $U$, that is, $V=\{v: v\in N(u^*) \cap N(u_1)\}$. Moreover, let $W=V(G^*) \setminus(U \cup V \cup B)$ in the following.

{\flushleft\bf Claim 11.} $G^*[U \cup V] \cong K_{2,  |V|}$ and $e(V, W)=0$.
\begin{proof}
Recall that $|G^*|<\frac{m+k-2}{2}+\sqrt{m}$, $d(u^*) \geq(1-\frac{1}{6 k}) \frac{m}{2}$ and $d(u_1) \geq(1-\frac{1}{3 k}) \frac{m}{2}$. Then $|V|\geq d(u^*)+d(u_1)-|G^*|\geq (1-\frac{1}{6 k}) \frac{m}{2}+(1-\frac{1}{3 k}) \frac{m}{2}-(\frac{m+k-2}{2}+\sqrt{m})=\frac{m}{2}-\frac{m}{4k}-\sqrt{m}-\frac{k}{2}+1>\frac{m}{2}-\frac{m}{3k}$. Therefore, $G^*[U \cup V]$ contains a spanning subgraph isomorphic to $K_{2, |V|}$, where $|V| \geq 3$. We can conclude that $G^*[U \cup V] \cong K_{2, |V|}$. Now choose an arbitrary vertex $v \in V$. Clearly, $v$ is a vertex of degree $2$ in $G^*[U \cup V]$.

Recall that $G^*-B$ is minimally $(2,2)$-edge-connected. We assert that $v$ is either a cut vertex or a vertex of degree $2$ in $G^*$. Furthermore, since $v$ is not a cut vertex of $G^*$ by Claim 5, we only need to prove that $d_{G^*}(v)= 2$. If not, $d_{G^*}(v)\geq 3$. Then $e(v,W)> 0$. Let $G=G^*-u^*v$. Then $\kappa'_2(G)=2$ since $d_{G}(v')\geq 3$ for any $v'\in G-B$ and $v$ is not a cut vertex of $G$. Hence, $d_{G^*}(v)= 2$, which also implies $e(V, W)=0$.
\end{proof}

{\flushleft\bf Claim 12.} $e(U, W)=e(u^*, W)$.
\begin{proof}
If not, $e(U, W)>e(u^*, W)$, which implies $e(u_1, W)\geq 1$. Suppose that $u_1w_1\in E(G^*)$. We have $u^*w_1\notin E(G^*)$ because $w_1\notin V$. Assuming that $u_1w_1\in B_1$, where $B_1$ is a block of $G^*$. Now let us consider the longest path $P:=w_1w_2\cdots w_p$ in $W$ that contains $w_1$. 
If $w_p=w_1 \in W$, then $w_1u_1$ is a cut edge, which contradicts the fact that $B_1$ is a block. If $w_p \in N_W(u_1)$, this implies that $u_1$ is a cut vertex, which contradicts the fact that  $B_1$ is a block. If $w_p \in N_W(u^*)$, then let $G'=G^*-w_1u_1+w_1u^*$. We have $G'$ is still minimally $(k,k)$-edge-connected and $\rho(G')>\rho(G^*)$ by Lemma \ref{lem::2.4}, a contradiction. 
\end{proof}

{\flushleft\bf Claim 13.} $\{u^*\} \cup W$ induces some cycles with a common vertex $u^*$.
\begin{proof}
Suppose to the contrary that some leaf block $A$ in $\{u^*\} \cup W$ is not a cycle. Then there exists some vertex $w_1\in A$ such that $d(w_1)\geq3$. Note that $A$ does not contain a chord by Lemma \ref{lem::2.9}. Thus, $A$ must contain an internal path $P:=w_1w_2\cdots w_l$ of length at least $2$. And hence, $w_1\neq u^*$ or $w_l\neq u^*$. Without loss of generality, suppose $w_l\neq u^*$. Let $G'=G^*-w_1w_2-w_{l-1}w_l+u^*w_2+u_1w_{l-1}$. Then $G'$ is still a minimally $(k,k)$-edge-connected graph and $x_{u_1}>x_{w_l}$ since $u_1\in U$. However, $\rho(G')>\rho(G^*)$ by Lemma \ref{lem::2.4}. This contradiction establishes $\{u^*\} \cup W$ induces some cycles with a common vertex $u^*$. 
\end{proof}

{\flushleft\bf Claim 14.} $G^*[\{u^*\} \cup W]\cong F_{\frac{|W|}{2}}$. 
\begin{proof}
If not, there exists some cycle $C_l:=u^*w_1w_2\cdots w_{l-1}u^*$ of length at least $4$ in $G^*[\{u^*\} \cup W]$. If $l=4$, then $u^*w_1w_2w_3$ forms a quadrilateral. Let $G_1=G^*-w_1w_2-w_2w_3+w_1u_1+w_3u_1$. Then $G_1$ is still a minimally $(k,k)$-edge-connected graph. We have $\rho(G_1)>\rho(G^*)$ by Lemma \ref{lem::2.4} again, a contradiction.
If $l\geq 5$, then let $G_2=G^*-\{w_1w_2\}-\{w_2w_3\}+\{w_3u^*\}$. Similar to the above contradiction. Therefore, $G^*[\{u^*\} \cup W]\cong F_{\frac{|W|}{2}}$. 
\end{proof}
By the maximality of $\rho(G^*)$ again, we get $G^*\cong F(t_1,\frac{m-3t_1-k+2}{2})$. At last, we will show $t_1=0$ for $m-k\equiv0\pmod 2$ and $t_1=1$ for $m-k\equiv1\pmod 2$.

{\flushleft\bf Claim 15.} $t_1\leq 1$.
\begin{proof}
If not, $t_1\geq 2$. Suppose $w_1w_2\in E(W)$ and $w_3w_4\in E(W)$.
Then let $G=G^*-\{w_4u^*\}-\{w_1w_2\}-\{w_3w_4\}+\{w_iu\mid 1\leq i\leq3\}$. Clearly, $G$ is still a minimally $(k,k)$-edge-connected graph. Recall that 
$x$ is the Perron vector of $A(G^*)$, and $\rho^*=\rho(G^*)$. By symmetry, we have $x_{w_i}=x_{w_1}$ for $2\leq i\leq 4$. Thus from $A(G^*)x=\rho^*x$, we obtain $x_{w_1}=\frac{x_{u^*}}{\rho^*-1}$. Note that $x_{u}>(1-\frac{1}{6 k}) x_{u^*}$ by Claim 9.
Combining with $\rho(G^*)\geq \sqrt{m-k}$, then
\begin{align*}
\rho(G)-\rho(G^*)&\geq x^T\big(A(G)-A(G^*)\big)x\\
&\geq2(3x_{w_1}x_u-x_{w_1}x_{u^*}-2x_{w_1}^2)\\
&\geq2x_{w_1}(3x_u-x_{u^*}-\frac{2x_{u^*}}{\rho^*-1})~~(\text{since}~ x_{w_1}=\frac{x_{u^*}}{\rho^*-1})\\
&\geq2x_{w_1}(3(1-\frac{1}{6 k}) x_{u^*}-x_{u^*}-\frac{2x_{u^*}}{\rho^*-1})~~ (\text{since}~ x_{u}>(1-\frac{1}{6 k}) x_{u^*})\\
&>0~~ (\text{since}~ \rho*\geq \sqrt{m-k}).
\end{align*}
It follows that $\rho(G)>\rho(G^*)$, a contradiction.
\end{proof}
If $m-k\equiv0\pmod 2$, the above claim means that $t_1=0$, then $G^*\cong F(0,\frac{m-k+2}{2})$. If $m-k\equiv1\pmod 2$, the above claim means that $t_1=1$, then $G^*\cong F(1,\frac{m-k-1}{2})$.
Notice that $F(0,\frac{m-k+2}{2})\cong K^{k-2}_{2,\frac{m-k+2}{2}}$ for $m-k\equiv0\pmod 2$, and $K^{k-2}_{2,\frac{m-k-1}{2}}*K_3\cong F(1,\frac{m-k-1}{2})$ for $m-k\equiv1\pmod 2$. It completes the proof of Theorem \ref{thm::2}.
\end{proof}

\section{ Acknowledgement}
This work is supported by Natural Science Foundation of Xinjiang Uygur Autonomous Region (No. 2024D01C41), Tianshan Talent Training Program (No. 2024TSYCCX0013), the Basic scientific research in universities of Xinjiang Uygur Autonomous Region (No. XJEDU2025P001), National Natural Science Foundation of China (No.\ 12361071) and  Excellent Doctor Innovation program of Xinjiang University (No. XJU2024 \ BS043).

\end{document}